\theoremstyle{plain} 
\newtheorem{thm}{Theorem}[section]
\newtheorem{lem}[thm]{Lemma}
\newtheorem{prop}[thm]{Proposition}
\newtheorem{bem}[thm]{Remark}
\newtheorem*{ack}{Acknowledgements}
\providecommand{\ov}{\overline}
\providecommand{\un}{\underline}
\providecommand{\sm}{\setminus}
\providecommand{\N}{\mathbb{N}}
\providecommand{\R}{\mathbb{R}}
\providecommand{\eps}{\varepsilon}
\providecommand{\ov}{\overline}
\providecommand{\dr}{\,dr}
\providecommand{\ds}{\,ds}
\providecommand{\dt}{\,dt}
\providecommand{\dz}{\,dz}
\DeclareMathOperator{\sign}{sign}
\DeclareMathOperator{\ind}{ind}
\DeclareMathOperator{\pr}{pr}
\newcommand{\cC}{{\mathcal C}}
\newcommand{\cD}{{\mathcal D}}
\newcommand{\cO}{{\mathcal O}}
\newcommand{\cS}{{\mathcal S}}
\newcommand{\cZ}{{\mathcal Z}}
\newcommand{\al}{\alpha}
\newcommand{\be}{\beta}
\newcommand{\la}{\lambda}
\newcommand{\De}{\Delta}
\newcommand{\La}{\Lambda}
\newcommand{\Om}{\Omega}
\newcommand{\Si}{\Sigma}
\newcommand{\pa}{\partial}
\newenvironment{altproof}[1]
{\noindent%\addvspace{0.3cm}
{\em Proof of {#1}}.}
{\nopagebreak\mbox{}\hfill $\Box$\par\addvspace{0.5cm}}
\numberwithin{equation}{section}
\begin{document}

\allowdisplaybreaks

\title[Infinitely many global continua bifurcating from a single solution]{Infinitely many global continua
bifurcating from a single solution of an elliptic problem with concave-convex nonlinearity}

\author{Thomas Bartsch, Rainer Mandel}
\address{T. Bartsch \hfill\break
Mathematisches Institut, Universit\"at Giessen,  \hfill\break
D-35392 Giessen, Arndtstrasse 2, Germany}
\email{Thomas.Bartsch@math.uni-giessen.de}
\address{R. Mandel \hfill\break
Scuola Normale Superiore \hfill\break
I-56126 Pisa, Piazza dei Cavalieri 7, Italy}
\email{Rainer.Mandel@sns.it}
%\date{TODO}

\subjclass[2000]{Primary: 35B32; Secondary: 34C23, 35J610, 58E07}
\keywords{global bifurcation; concave-convex nonlinearity}

\begin{abstract}

We study the bifurcation of solutions of semilinear elliptic boundary value problems of the form
\begin{equation}
\left\{
\begin{aligned}
   -\De u &= f_\la(|x|,u,|\nabla u|) &&\text{in }\Om,\\
   u &= 0 &&\text{on }\pa\Om,
\end{aligned}
\right.
\end{equation}
on an annulus $\Om\subset\R^N$, with a concave-convex nonlinearity, a special case being the nonlinearity first considered by Ambrosetti, Brezis and Cerami: $f_\la(|x|,u,|\nabla u|)=\la|u|^{q-2}u + |u|^{p-2}u$ with $1<q<2<p$. Although the trivial solution $u_0\equiv0$ is nondegenerate if $\la=0$ we prove that $(\la_0,u_0)=(0,0)$ is a bifurcation point. In fact, the bifurcation scenario is very
singular: We show that there are infinitely many global continua of radial solutions
$\cC_j^\pm\subset\R\times\cC^1(\ov\Om)$, $j\in\N_0$ which bifurcate from the trivial branch $\R\times\{0\}$ at $(\la_0,u_0)=(0,0)$ and consist of solutions having precisely $j$ nodal annuli. A detailed study of these continua shows that they accumulate at $\R_{\ge0}\times\{0\}$ so that every $(\la,0)$ with $\la\ge0$ is a bifurcation point. Moreover, adding a point at infinity to $\cC^1(\ov\Om)$ they also accumulate at $\R\times\{\infty\}$, so there is bifurcation from infinity at every $\la\in\R$.
\end{abstract}

\maketitle

\section{Introduction}\label{sec:intro}
The boundary value problem
\begin{equation}\label{Gl ABC}
\left\{
\begin{aligned}
   -\De u &= \la|u|^{q-2}u + |u|^{p-2}u &&\text{in }\Om,\\
   u &= 0 &&\text{on }\pa\Om,
\end{aligned}
\right.
\end{equation}
with $\Om\subset\R^N$ a bounded domain, $1<q<2<p$ and $\la\in\R$, has received a lot of attention since being first investigated by Ambrosetti, Brezis and Cerami in \cite{ABC_Combined_effects}. Using sub- and supersolutions it is proved in \cite{ABC_Combined_effects} that there exists $\La>0$ such that \eqref{Gl ABC} has a positive solution $\un{u}_\la$ for $0<\la\le\La$. If in addition $p<2^*=\frac{2N}{(N-2)^+}$ then solutions of \eqref{Gl ABC} correspond to critical points of the functional
\[
I_\la(u)
 = \frac12\int_\Om |\nabla u|^2 - \frac{\la}{q}\int_\Om |u|^q
   - \frac1p\int_\Om |u|^p
\]
defined on $H^1_0(\Om)$, hence variational methods apply. In that case a second positive solution $\ov{u}_\la$ exist for $0<\la\le\La$ as was shown in \cite{ABC_Combined_effects}, Theorem~2.3. Moreover, there exists $\la^*>0$ such that for every $0<\la<\la^*$ problem \eqref{Gl ABC} has infinitely many solutions $\un{u}_{\la,j}$ satisfying $I_\la(\un{u}_{\la,j})<0$, and there exist infinitely many solutions $\ov{u}_{\la,j}$ satisfying $I_\la(\ov{u}_{\la,j})>0$. In \cite{BaWi_On_an_elliptic} Bartsch and Willem showed $\la^*=\infty$ as well as $I_\la(\un{u}_{\la,j})\to 0$ and $I_\la(\ov{u}_{\la,j})\to\infty$ as $j\to\infty$. In addition they showed that the solutions $\ov{u}_{\la,j}$ also exist for $\la\le0$. Furthermore, Wang \cite{Wa_Nonlinear_boundary_value} proved that the solutions $\un{u}_{\la,j}$ not only tend to 0 energetically but also uniformly on $\Omega$. Wang even deals with more general classes of nonlinearities $f_\la(u)$ instead of $\la|u|^{q-2}u + |u|^{p-2}u$. The variational structure and the oddness of the nonlinearity, however, are essential to obtain infinitely many solutions $\un{u}_{\la,j}$ and $\ov{u}_{\la,j}$. As a consequence of these results for every $\la\ge0$ the trivial solution $(\la,0)$ is a bifurcation point and there is bifurcation from infinity at every $\la\in\R$.

A precise description of the set of solutions in the one-dimensional case $\Om=(a,b)$ for positive $\lambda$ is due to Liu \cite{Liu_Exact_number_of} and Cheng \cite{Che_On_an_open}. For $j\in\N_0$ and $0<\la<\Lambda_j$ the solutions $\un{u}_{\la,j}$ and $\ov{u}_{\la,j}$ have precisely $j$ nodes and thus exactly $j+1$ nodal intervals. These pairs of solutions exist for $0<\la<\La_j$ and form a continuous curve $\cC_j\subset\R\times\cC^1[a,b]$ which, for any $j\in\N_0$, bifurcates from the trivial solution branch at the point $(0,0)\in\R\times\cC^1[a,b]$. Notice that the curve has a unique turning point at $\la=\La_j$ where $\un{u}_{\La_j,j}=\ov{u}_{\La_j,j}$ holds. We shall show in the appendix that the sets $\cC_j$ can be continued to the range $\la\le0$, not as curves but as continua (connected sets). Schematically this may be illustrated as in Figure~1.

\medskip

  \begin{figure}[!htp] 
    \centering
    \begin{tikzpicture}[yscale=0.7, xscale=0.6]
	  \draw[->] (-10,0) -- (11,0) node[right] {$\la$};
	  \draw[->] (0,0) -- (0,10)  node[above] {$\|\cdot\|_{\cC^1(\ov\Omega)}$};
	  \draw plot[smooth, tension=0.7] coordinates {  (0,0)  (2,0.8) (3.2,2) (2,3.6) (-2,5.2) (-10,7)};
  	  \draw[dashed,thin] (3.2,0) node[below]{$\La_0$} -- (3.2,2.2);
  	  \draw plot[smooth, tension=0.7] coordinates {  (0,0) (3.1,0.9) (4.5,3.5) (0,5.7) (-10,8)};
  	  \draw[dashed,thin] (4.5,0) node[below]{$\La_1$} -- (4.7,3.3);
  	  \draw plot[smooth, tension=0.6] coordinates {  (0,0) (3,0.5) (5,1.4) (7,3.8) (3,6) (-10,9)};
  	  \draw[dashed,thin] (7,0) node[below]{$\La_2$} -- (7.05,3.6);
  	  \draw plot[smooth, tension=0.5] coordinates {  (0,0)  (3,0.3)  (7,2) (8.5,5.5)  (-10,10)};
  	  \draw[dashed,thin] (9.4,0) node[below]{$\La_3$} -- (9.4,4.6);
  	  \node at (-7,6) {$\cC_0$};
  	  \node at (-5.5,6.65) {$\cC_1$};
  	  \node at (-4.5,8.25) {$\cC_2$};
  	  \node at (-3.6,9) {$\cC_3$};
  	  \draw[dashed,thin] (10,1.5) node[right] {$\un{u}_{\lambda,3}$} -- (7.6,2.5);
  	  \draw[dashed,thin] (9,7) node[right,above] {$\ov{u}_{\lambda,3}$} -- (7.6,5.7);
 	\end{tikzpicture}
 	\caption{The solution curves in the 1D case $\Om=(a,b)$.}
 	\label{Fig continua}
\end{figure}
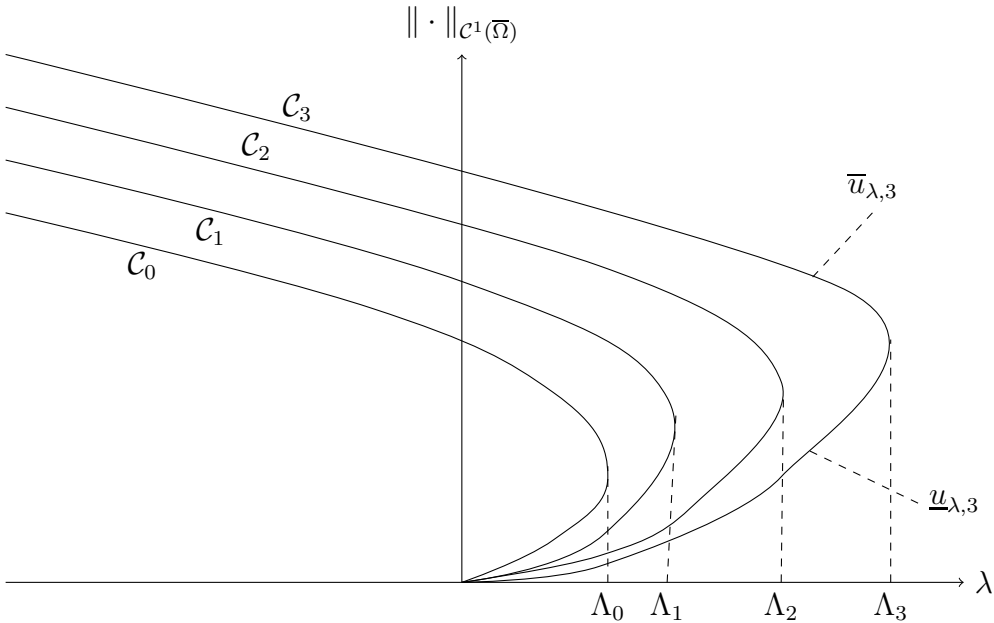

\medskip

Due to the oddness of the right hand side, there are actually two families of such curves: $\cC_j^+ = \cC_j$ and $\cC_j^- = \{(\la,-u):(\la,u)\in\cC_j\}$. In \cite{Che_On_an_open, Liu_Exact_number_of} only the case $\la>0$ has been treated. For $\la<0$ the situation becomes more complicated because there exist solutions with dead cores, that is, nontrivial solutions which vanish identically on sets of positive measure. As a consequence, the curves $\cC_j$ split and get blurred for $\la<\mu_j$ where $\mu_j<0$ can be explicitly computed. Since it is instructive and motivates the conjecture that this phenomenon occurs generically (see Remark~\ref{Bem Remark 1}) we shall give a detailed description of this phenomenon at the end of the paper. A similar behavior has been observed, for instance, in \cite{DHM09} for a quasilinear equation, and in \cite{GRS12} for a Neumann boundary problem with concave-convex nonlinearity and a parameter in the boundary condition. The papers \cite{DHM09,GRS12} deal with ordinary differential equations, but in \cite{DHM09} one can also find a discussion of the literature on dead core solutions for elliptic problems on higher dimensional domains, and on the internal free boundaries which the dead cores have.

It seems to be out of reach to obtain such detailed results for \eqref{Gl ABC} on an arbitrary bounded domain. In this paper we shall deal with a radially symmetric setting, but with a rather general nonlinearity which need not be variational nor odd in $u$. We consider the boundary value problem
\begin{equation}\label{Gl eq}
\left\{
\begin{aligned}
 - \Delta u &= f_\la(|x|,u,|\nabla u|) &&\quad\text{in }\Om, \\
	u&=0 &&\quad\text{on }\pa\Om,
\end{aligned}
\right.
\end{equation}
on the annulus $\Om:=\{ x\in\R^N : \rho_1<|x|<\rho_2\}$ with radii $\rho_2>\rho_1>0$. The nonlinearity
$f_\la(r,z,\xi)$ has a concave behavior for $z$ near $0$, and grows superlinearly for $|z|\to\infty$. The
case $f_\la(r,z,\xi)=\la|z|^{q-2}z + |z|^{p-2}z$ with $1<q<2<p<\infty$ will be covered. Our aim is to
show that there are disjoint continua $\cC_j^\pm\subset\R\times\cC^1(\Om)$ of solutions
$(\la,u)$ of \eqref{Gl eq} which bifurcate from the trivial solution $(0,0)$, that is:
$\overline{\cC_j^\pm}\cap\R\times\{0\}=\{(0,0)\}$. We shall also give a precise description of the global behaviour of the continua. They accumulate at $\R_{\ge0}\times\{0\}$ and at $\R\times\{\infty\}$, where $\infty$ is a point at infinity added to $\cC^1(\Om)$. More precisely we prove that every $(\la,0)$ with $\la\ge0$ is a bifurcation point and there is bifurcation from infinity at every $\la\in\R$. For $(\la,u)\in\cC_j^\pm$ the function $u$ is radial and has precisely
$j+1$ nodal annuli. However, in case $\lambda<0$ it may have dead cores consisting of unions of annuli.

There are a number of difficulties to overcome. Firstly, there is no uniqueness of positive or negative radial solutions of \eqref{Gl eq} in annuli which would allow to patch solutions as in \cite{BW93} or as in the proof of the result for \eqref{Gl ABC} on an interval. Secondly, the problem is in general non-variational, and the nonlinearity is not odd in $u$, hence Ljusternik-Schnirelmann theory does not apply. Thirdly, the bifurcation scenario is very singular, and results like Rabinowitz' global bifurcation theorem do not apply either. In fact, due to the concave behavior of $f_\lambda(r,z,\xi)$ for $z$ near $0$ equation \eqref{Gl eq} cannot be linearized at $z=0$, except when $\la=0$. This concave behavior of $f$ also causes problems when applying ODE techniques, and it is responsible for the existence of dead core solutions.

Here is an outline of the paper. In Section~\ref{sec:results} we will state the precise assumptions on $f$, and we will formulate and discuss our main results on the existence and shape of the continua $\cC_j^\pm$. In Section~\ref{sec:transform} we transform the ODE boundary value problem corresponding to \eqref{Gl eq} into an equivalent problem on $(0,1)$ using a suitable nonlinear transformation of the radial coordinate. The corresponding result will be formulated in Theorem~\ref{Thm 2}. In Section~\ref{sec:a-priori} we use a modification of the time map technique to prove (mostly explicit) a-priori estimates for solutions of the ODE problem depending on the number of their nodal intervals. In Section~\ref{sec:proof-thm2} these estimates
will be used to perform a degree argument in order to prove Theorem~\ref{Thm 2} which, using the corresponding inverse transformation, almost immediately provides the proof of Theorem~\ref{Thm 1}. In Appendix~A we explicitly determine the solution continua $\cC_j^\pm$ for the one-dimensional Ambrosetti-Brezis-Cerami problem in the interval $(0,1)$ and all $\la\in\R$. This includes a detailed description of the dead cores which appear for $\la<0$. For $\la\ge 0$ the existence of the solutions is not new, as mentioned above, but we include it in order to recall how exact multiplicity results can be proved with the aid of the time-map technique when the space dimension is one (so that the ODE problem is autonomous). In Appendix~B we give the proof of some technical propositions.

\section{Statement of results}\label{sec:results}
We first state our hypotheses. We assume that there are positive numbers $m_1,M_1$ and exponents $p,q$
satisfying $1<q<2<p<\infty$ such that the following holds:
\begin{itemize}
\item[(A1)]
  The map
  \[
  f:\R\times[\rho_1,\rho_2]\times\R\times\R_{\ge 0}\to\R,\quad (\la,r,z,\xi)\mapsto f_\la(r,z,\xi),
  \]
  is continuous, and it is differentiable with respect to $r,\xi$. Moreover, for all $\la,s>0$ there is a
  $K_1(\la,s)>0$ such that
  \[
  |\pa_r f_\la(r,z,\xi)|,\ |\pa_\xi f_\la(r,z,\xi)|\le K_1(\la,s)|f_\la(r,z,\xi)|
  \]
  for $r\in [\rho_1,\rho_2]$ and $0\leq |z|,\xi\le s$.
\item[(A2)] For all $\la\ge0$, $r\in[\rho_1,\rho_2]$, $z\in\R$, $\xi\in\R_{\ge0}$ we have
  \[
  m_1 (\la |z|^q+|z|^p) \le z f_\la(r,z,\xi) \le M_1 (\la|z|^q+|z|^p)
  \]
\item[(A3)] For all $\la\le0$, $r\in[\rho_1,\rho_2]$, $z\in\R$, $\xi\in\R_{\ge0}$
  we have
  \[
    M_1\la |z|^q + m_1 |z|^p \le z f_\la(r,z,\xi) \le m_1 \la |z|^q+ M_1 |z|^p.
  \]
%   Moreover, given sequences $\la_n\le0$, $t_n\to\infty$ with $\la_nt_n^{q-p}\to0$ as $n\to\infty$ there holds:
%   \[
%   t_n^{1-p} f(\la_n,r,t_nz,\xi) \to c_1(r)|z|^{p-2}z \quad\text{as }n\to\infty
%   \]
%   uniformly for $(r,z,\xi)\in[\rho_1,\rho_2]\times[0,1]\times\R_{\ge 0}$ where $c_1:[\rho_1,\rho_2]\to[m_1,M_1]$ is a continuous function.
\end{itemize}
Moreover we want to add an assumption which allows to estimate the energy of the constructed solutions in case the system is variational, i.e.\ when the right hand side in \eqref{Gl eq} does not depend on $|\nabla u|$. To this end we introduce the following condition:
\begin{itemize}
\item[(A4)] $f_\la(r,z,\xi) = f_\la(r,z)$ and the function
  $F_\la(r,z):=\int_0^z f_\la(r,s)\ds$ satisfies
  \[
  \liminf_{|z|\to\infty}\inf_{r\in[\rho_1,\rho_2]} \frac{f_\la(r,z)z}{F_\la(r,z)} > 2
  \quad\text{if } \la\in\R,\qquad
%   \]
%   and
%   \[
  \limsup_{|z|\to 0}\sup_{r\in [\rho_1,\rho_2]} \frac{f_\la(r,z)z}{F_\la(r,z)} < 2
  \quad\text{if } \la>0.
  \]
\end{itemize}
In this case the energy is defined by
$$
I_\la(u) := \frac12 \int_\Om |\nabla u(x)|^2\,dx - \int_\Om F_\la(|x|,u(x))\,dx.
$$
As mentioned earlier all of these conditions are satisfied for nonlinearities of Ambrosetti-Brezis-Cerami type. For instance, (A1),(A2),(A3) are satisfied if $f$ is given by 
$$
  f_\la(r,z,\xi) = \la a_q(r,\xi,\la)|z|^{q-2}z + g_\la(r,z,\xi) + a_p(r,\xi,\la)|z|^{p-2}z
$$
where $1<q<2<p<\infty$, $a_q,a_p$ are bounded continuously differentiable functions which are bounded from
below by a positive constant, and $g$ satisfies $g_\la(r,z,\xi)/\max\{|z|^{q-1},|z|^{p-1}\}\to0$ as
$|z|\to0$ or $|z|\to\infty$. Clearly, (A4) holds in that case if and only if $a_q,a_p$ and $g$ do not depend
on $\xi$.

We work on the space $X=C^1_{rad}(\ov\Om,\R)$ of radial $\cC^1$-functions $u:\ov\Om\to\R$. A solution $(\la,u)\in\R\times X$ of \eqref{Gl eq} is defined to be a solution of the integral equation associated to \eqref{Gl eq} since only the latter makes sense for merely continuously differentiable functions. Given the radial symmetry and assumption~(A1) it is immediate that every solution of the integral equation is twice continuously differentiable on $\ov\Om$ and solves the boundary value problem \eqref{Gl eq} in the classical sense. The set of solutions
\[
\cS = \{(\la,u)\in\R\times X:u\ne 0,\ (\la,u)\text{ is a classical solution of }
     \eqref{Gl eq}\}
\]
contains the sets
\[
\cS_j^\pm
 = \{(\la,u) \in \cS :  u \text{ has precisely }j+1 \text{ nodal annuli }
    A_0,\ldots,A_j, \sign(u|_{A_k})=\pm(-1)^k\},
\]
$j\in\N_0$. Here a function $u:\ov\Om\to\R$ is said to have precisely $j+1$ nodal annuli if there are mutually disjoint open annuli $A_0,\ldots,A_j\subset \Om$ such that $|u|>0$ on $A_k$ for $k=0,\ldots,j$, and $u=0$ on $\Om\setminus\bigcup_{k=0}^jA_k$. In particular, $\sign(u|_{A_k})$ is well-defined for $k=0,\ldots,j$. If the space dimension is one a nodal annulus will be called a nodal interval. Notice that we do not require $\ov\Om=\bigcup_{k=0}^j\ov{A_k}$ so that dead core solutions are permitted. Indeed, for our purposes such a requirement would be too restrictive since solutions $(\la,u)\in\cS_j^\pm$ are expected to possess dead cores if the parameter $\la$ is negative and has sufficiently large absolute value, see also 
Remark~\ref{Bem Remark 1}~(d) and Proposition~\ref{Thm 1D case}. We can now state our result.

\begin{thm}\label{Thm 1}
Let $f$ satisfy (A1), (A2), (A3) for $1<q<2<p<\infty$. Then there are maximal connected sets $\cC_j^\pm\subset\cS_j^\pm$, $j\in\N_0$, having the following properties:
\begin{itemize}
\item[(i)] $\ov{\cC_j^\pm}\setminus\cC_j^\pm = \{(0,0)\}$ for all $j\in\N_0$.
\item[(ii)] $\ov{\cC_j^\pm}\cap\ov{\cC_k^\pm} = \ov{\cC_j^+}\cap\ov{\cC_j^-} = \{(0,0)\}$ for all $j,k\in\N_0$ with $j\neq k$.
\item[(iii)] There are sequences $(\La_j^\pm)_{j\in\N_0}$ in $(0,\infty)$ tending to
infinity such  that $\pr(\cC_{j}^\pm)=(-\infty,\La_j^\pm]$. 
\item[(iv)] We have
  $\liminf_{j\to\infty}\cC_j^\pm=\limsup_{j\to\infty}\cC_j^\pm=\R_{\ge0}\times\{0\}$.
  Furthermore:
  \begin{itemize}
  \item[(a)] For all $\la>0$ the point $(\la,0)$ is a bifurcation point but not a
  branching point for \eqref{Gl eq} and there is a number $j^{\pm}(\la)\in\N_0$ such that for all $j\ge j^{\pm}(\la)$ there are solutions $(\la,\un{u}_j^\pm)\in\cC_j^\pm$ with $\|\un{u}_j^\pm\|_{\cC^1}\to 0$ as $j\to\infty$.
  \item[(b)] Any $\la\in\R$ is a bifurcation point from infinity but not a
  branching point from infinity for \eqref{Gl eq}, and setting $j^{\pm}(\la)=0$ for $\la\le 0$, for all $j\ge j^{\pm}(\la)$ there are solutions
  $(\la,\ov{u}_j^\pm)\in \cC_j^\pm$ with $\|\ov{u}_j^\pm\|_\infty\to\infty$ as $j\to\infty$.
  \end{itemize}
\item[(v)] In case (A4) holds the solutions from (iv)(a),(b) satisfy
  \[
  I_\la(\un{u}_j^\pm)\to 0^-\quad\text{and}\quad I_\la(\ov{u}_j^\pm)\to \infty \quad\text{as }j\to\infty.
  \]
\end{itemize}
\end{thm}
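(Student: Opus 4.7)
The plan is to follow the paper's roadmap: reduce \eqref{Gl eq} to an equivalent second-order ODE on $(0,1)$, prove sharp a-priori bounds for solutions with a prescribed number of nodal intervals, and then construct the continua by a degree argument based not at the singular point $(0,0)$ but at a regular value $\la_*>0$. The work is really carried out on the ODE side as Theorem~\ref{Thm 2}, which then transfers back to Theorem~\ref{Thm 1} via the inverse transformation.

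After performing the nonlinear change of radial variable from Section~\ref{sec:transform}, the radial form of \eqref{Gl eq} becomes a problem $-v'' = g_\la(t,v,v')$ on $(0,1)$ with $v(0)=v(1)=0$, where $g_\la$ inherits the concave-convex structure (A1)--(A3) with modified constants and nodal annuli of $u$ correspond to nodal intervals of $v$. Next, I would use a modified time-map to derive $j$-dependent a-priori estimates: integrating $\tfrac12 (v')^2$ together with a primitive of $g_\la$ across a single nodal interval and using (A2)--(A3), one relates the length of the interval to the local maximum $M$ of $|v|$ and to $\la$. Since the $j+1$ nodal intervals of a solution in $\cS_j^\pm$ must fit inside $(0,1)$, summing these length relations yields two-sided bounds on $\|v\|_\infty$ which shrink as $j\to\infty$ for each fixed $\la$ and which remain bounded on $(-\infty,\La]\times \cS_j^\pm$ for each $\La>0$. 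In particular, at fixed $j$ the parameter $\la$ itself must stay bounded above by some $\La_j^\pm<\infty$, since too large $\la$ would force the nodal intervals too long to fit $j+1$ of them inside $(0,1)$.

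To construct the continua I would fix $j\in\N_0$ and $\la_*>0$, and apply a Leray-Schauder degree argument in a box $[-\La,\la_*]\times\{v : \eps_1\le\|v\|\le \eps_2\}$ in $\cC^1[0,1]$, where the inner and outer radii are chosen according to the $j$-dependent estimates so that no solution in $\cS_j^\pm$ touches the sphere boundaries. Computing the degree produces a connected set of solutions in $\cS_j^\pm$; the a-priori bounds imply that its closure can only meet $\R\times\{0\}$ at $(0,0)$, giving (i), while disjointness (ii) is immediate from the preservation of nodal counts along continua. Continuing $\cC_j^\pm$ globally in $\la$ up to the maximal $\La_j^\pm$ yields (iii), and $\La_j^\pm\to\infty$ together with $\|\ov{u}_j^\pm\|_\infty\to\infty$ follows from the same time-map estimates used in reverse. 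The complementary part of (iv), namely $\|\un{u}_j^\pm\|_{\cC^1}\to 0$ as $j\to\infty$ at fixed $\la>0$, is a direct consequence of the shrinking of the a-priori lower bound; this also shows that every $(\la,0)$ with $\la\ge 0$ is a bifurcation point. Finally for (v) under (A4), the variational identity $I_\la'(u)u=0$ combined with the super-/sub-linearity conditions in (A4) yields $I_\la(\un{u}_j^\pm)\to 0^-$ and $I_\la(\ov{u}_j^\pm)\to\infty$ in the standard Ambrosetti-Rabinowitz style.

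The main obstacle I anticipate is the singular character of the bifurcation at $(0,0)$: because the concave term $|z|^{q-2}z$ makes $f_\la$ non-differentiable in $z$ at $0$, no linearization is available there and Rabinowitz' classical global bifurcation theorem cannot be applied at $(0,0)$ itself. The delicate step is therefore the degree computation on the annular box $\{\eps_1\le \|v\|\le\eps_2\}$: one has to show that the degree actually detects solutions with the correct nodal structure and varies non-trivially with $j$, which requires both the sharp shooting/time-map information provided by Section~\ref{sec:a-priori} and a careful separation of nodal classes along the continuum so that each component can be identified as a genuine global branch bifurcating from the single degenerate point $(0,0)$.
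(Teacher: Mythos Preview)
Your overall strategy---transform to the ODE on $(0,1)$, establish $j$-dependent a-priori bounds via a time-map argument, then build the continua by a degree argument together with generalized homotopy invariance and Whyburn's lemma---matches the paper's. Your treatment of parts (ii), (iv), and (v) is essentially on target.

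The one substantive divergence is your choice of anchor for the degree. You propose to work at a ``regular value $\la_*>0$'' and compute the Leray-Schauder degree in an annular box there, explicitly avoiding $\la=0$. The paper does the opposite: it anchors the degree computation precisely at $\la=0$. The point is that at $\la=0$ the concave term drops out, so $h_0(r,z,\xi)$ is comparable to $|z|^{p-2}z$ alone, and one can homotope \emph{the nonlinearity} (not the parameter) from $h_0$ to the autonomous $m|v|^{p-2}v$. The reference problem $-v''=m|v|^{p-2}v$, $v(0)=v(1)=0$, has a unique $j$-nodal solution $\zeta_j$ with $\zeta_j'(0)>0$, and it is nondegenerate (Proposition~\ref{Prop vjstar}); this gives $\deg(S_0,V_j^+,0)=\ind(\zeta_j)=\pm1\neq0$ at once. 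From there the parameter homotopy $\la\to\pm\infty$ produces the continua as you outline.

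At $\la_*>0$ both the concave and the convex term are present and no such clean reference problem is available; you give no mechanism for actually evaluating the degree there, and without knowing it is nonzero the rest of the argument does not start. The ``singularity'' of $(0,0)$ is a red herring for this step: the trivial solution at $\la=0$ is nondegenerate, and in any case the degree is computed on an open set $V_j^+$ bounded away from $0$. The real issue is identifying a parameter value at which the $j$-nodal solution set is explicitly controllable---and that is $\la=0$, not $\la_*>0$.
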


Before we comment on this result let us explain the notation which we used in the statement of Theorem~\ref{Thm 1}. For $\cC\subset \R\times X$ the set $\pr(\cC)$ denotes the projection of $\cC$ onto the parameter space which is formally defined by
\[
\pr(\cC):= \{\la\in\R:\text{there is } {u\in X} \text{ such that }(\la,u)\in\cC\}.
\]
The symbol $\limsup_{j\to\infty} \cC_j$ denotes the set of accumulation points of the sequence $(\cC_j)_j$ which consists of all points $(\la,u)\in \R\times X$ such that every neighbourhood of $(\la,u)$ contains elements of infinitely many $\cC_j$. Similarly, $\liminf_{j\to\infty} \cC_j$ is the set of limit points containing precisely those points such that every neighbourhood contains elements of almost all $\cC_j$. In
Theorem~\ref{Thm 1}~(iv)(a) the point $(\la,0)$ is called a bifurcation point (with respect to the family $\R\times\{0\}$ of trivial solutions) if there is a sequence $(\la_k,u_k)_k$ in $\cS$ with $(\la_k,u_k)\to(\la,0)$ and $u_k\neq 0$ as $k\to\infty$. In Theorem~\ref{Thm 1}~(iv)(b) we say that $\la$ is a bifurcation point from infinity if there is a sequence $(\la_k,u_k)_k$ in $\cS$ with $\|u_k\|_\infty\to\infty$ and
$\la_k\to\la$ as $k\to\infty$. Finally, a bifurcation point $(\la,0)$ is called a branching point if there is
a connected set in $\cS$ the closure of which contains $(\lambda,0)$. Similarly, $\la$ is said to be a
branching point from infinity if there is a connected set in $\cS$ such that there are solutions $(\mu,u)$
belonging to this connected set which satisfy $\|u\|_{\cC^1}\to \infty$ and $\mu\to\lambda$.

\begin{bem}\label{Bem Remark 1}
\begin{itemize}
\item[a)] A main feature of Theorem~\ref{Thm 1} is the fact that it proves the
  existence of infinitely many connected continua in a degenerate bifurcation setting. The results of   Ambrosetti-Brezis-Cerami \cite[Theorem~2.5]{ABC_Combined_effects}, Bartsch-Willem
  \cite[Theorem~1.1]{BaWi_On_an_elliptic} and Wang \cite[Theorem~1.1]{Wa_Nonlinear_boundary_value} are significantly improved in the special case of the annulus. It would be very interesting to investigate the case of a ball. Here for the global behavior of the continua one probably has to distinguish the cases $p<2^*$, $p=2^*$ and $p>2^*$; see Fig.~2, p.523, in \cite{ABC_Combined_effects}. In the case of a general bounded domain the existence of solution continua and the geometry of the nodal sets as in Theorem~\ref{Thm 1} remain a challenging open problem.
\item[b)] The properties of the solution continua $\cC_j^\pm$ from Theorem~\ref{Thm 1} are of qualitative nature. Aiming for a result which is strongest possible we could include the a-priori bounds for the associated ODE boundary value problem from Lemma~\ref{Lem Apriori estimates I} and Lemma~\ref{Lem Apriori estimates II} which provide further information about the localization of the solution continua. Since these estimates require the definition of several constants and mappings we decided not to include them into Theorem~\ref{Thm 1}.
\item[c)] As mentioned in the introduction the above result can be proved by explicit means when $n=1$ and $f_\la(r,z,\xi)= \la|z|^{q-2}z+|z|^{p-2}z$. We shall do this in Theorem~\ref{Thm 1D case} of Appendix~A. The proof there shows that the solutions have a dead core for $\la\ll0$.
\item[d)] The analysis of the one-dimensional case (see appendix A) leads to the
  conjecture that there is a threshold value $\un{\la}_j(\Om,p,q)<0$ such that every solution $(v,\la)\in\cS_j^\pm$ with $\la<\un{\la}_j(\Om,p,q)$ has a dead core. A thorough investigation of the formation of dead cores as $\la\to -\infty$ remains open.
%       Let us note that one can slightly improve Proposition \ref{Prop 2} in such a way that nontrivial
%       solutions $(v,\la)\in\Si_j$ are $j$-nodal for $\la$ larger than some negative value depending
%       on $j$ (and not only for $\la\ge 0$). Indeed, assuming that there was a sequence
%       $(u_k,\la_k)_{k\in\N}$ in $\cS_j$ such that $\la_k\to 0$ and $u_k$ has a dead core for
%       all $k$ then the priori estimate \ref{Lem Apriori estimates I} and the Arzela-Ascoli Theorem would yield
%       a subsequence converging to a nontrivial dead core solution for $\la=0$ which contradicts
%       Proposition \ref{Prop 2}. Proposition \ref{Prop 2} shows that in case $\la\ge 0$ any nontrivial
%       solution $(v,\la)\in\Si_j$ is $j$-nodal.
\end{itemize}
\end{bem}

\section{Transforming the problem} \label{sec:transform}
Since we aim at proving the existence of radially symmetric solutions $u$ of \eqref{Gl eq} with a prescribed number of zeros it is convenient to consider the corresponding boundary value problem for the radial profile $w$ defined by the equation $u(x)=w(|x|)$ and satisfying $w(\rho_1)=w(\rho_2)=0$. This boundary value problem is given by
\begin{equation}\label{Gl ODE 2}
\left\{
\begin{aligned}
 &- w'' - \frac{N-1}{r} w' = f_\la(r,w,|w'|) \quad\text{in }(\rho_1,\rho_2),\\
 &w(\rho_1) = w(\rho_2) = 0.
\end{aligned}
\right.
\end{equation}
In Proposition \ref{Prop 1} we show that the diffeomorphism $\phi:[0,1]\to[\rho_1,\rho_2]$ given by
\begin{equation} \label{Gl Def phi}
\phi(r):=\rho_1^{1-r}\rho_2^r \quad\text{if }N=2,\qquad
\phi(r):= (\rho_1^{2-n}+r(\rho_2^{2-n}-\rho_1^{2-n}))^{1/(2-n)}
 \quad\text{if }N\ne 2,
\end{equation}
transforms \eqref{Gl ODE 2} into the boundary value problem
\begin{equation}\label{Gl eq ODE}
\left\{
\begin{aligned}
&- v''(r) = h_\la(r,v(r),|v'(r)|)\quad\text{in }(0,1),\\
&v(0)=v(1)=0,
\end{aligned}
\right.
\end{equation}
where the function $h_\la:[0,1]\times\R\times\R_{\ge0}$ is defined by
\begin{equation}\label{Gl Definition h}
  h_\la(r,z,\xi) = \phi'(r)^2f_\la(\phi(r),z,\xi/\phi'(r)).
\end{equation}

\begin{prop} \label{Prop 1}
The following holds for functions $u:\Om\to\R$ and $w:[\rho_1,\rho_2]\to\R$ related by $u(x)=w(|x|)$.
\begin{itemize}
\item[(i)] A function $u$ is a classical solution of \eqref{Gl eq} if and only if
  $w\circ\phi$ is a classical solution of \eqref{Gl eq ODE}.
\item[(ii)] We have $0<m_2\le \phi'(r)\le M_2$ for all $r\in [0,1]$ where $m_2,M_2$
  are given by
  \begin{align*}
   &m_2 = \rho_1\ln\Big(\frac{\rho_2}{\rho_1}\Big),
    &&M_2 = \rho_2\ln\Big(\frac{\rho_2}{\rho_1}\Big)
    &&\text{in case }N=2, \\
   &m_2 = \frac{\rho_1}{N-2}\Big(1-\Big(\frac{\rho_1}{\rho_2}\Big)^{N-2}\Big),
    &&M_2 = \frac{\rho_2}{N-2}\Big(\Big(\frac{\rho_2}{\rho_1}\Big)^{N-2}-1\Big)
    &&\text{in case }N\ne 2.
  \end{align*}
\end{itemize}
\end{prop}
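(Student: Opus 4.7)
For part (i), the standard radial reduction of the Laplacian already turns \eqref{Gl eq} into \eqref{Gl ODE 2} for the radial profile $w$, so the task reduces to showing that the substitution $v=w\circ\phi$ converts \eqref{Gl ODE 2} into \eqref{Gl eq ODE}. I would apply the chain rule to obtain
\begin{equation*}
v'(r)=\phi'(r)w'(\phi(r)),\qquad v''(r)=\phi'(r)^2 w''(\phi(r))+\phi''(r)w'(\phi(r)),
\end{equation*}
solve for $w''(\phi(r))$, and substitute into \eqref{Gl ODE 2} to get
\begin{equation*}
-v''(r)=\phi'(r)^2 f_\la\bigl(\phi(r),v(r),|v'(r)|/\phi'(r)\bigr)+\Bigl[\frac{(N-1)\phi'(r)^2}{\phi(r)}-\phi''(r)\Bigr]w'(\phi(r)).
\end{equation*}
In view of the definition \eqref{Gl Definition h} of $h_\la$, equivalence with \eqref{Gl eq ODE} reduces to the single identity $\phi''(r)\phi(r)=(N-1)\phi'(r)^2$ on $[0,1]$, which is precisely the ODE characterising the chosen $\phi$. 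I would verify this by a direct differentiation of \eqref{Gl Def phi} in the two cases $N=2$ and $N\neq 2$. The boundary conditions $v(0)=v(1)=0$ correspond to $w(\rho_1)=w(\rho_2)=0$ since $\phi(0)=\rho_1$ and $\phi(1)=\rho_2$, and since $\phi$ is a smooth diffeomorphism the entire calculation is reversible, yielding both implications.

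For (ii) I would simply differentiate \eqref{Gl Def phi}. In the case $N=2$ this gives $\phi'(r)=\ln(\rho_2/\rho_1)\phi(r)$, which is strictly increasing on $[0,1]$; the bounds $m_2=\phi'(0)$ and $M_2=\phi'(1)$ then read off at once. In the case $N\neq 2$ one finds $\phi'(r)=\frac{\rho_2^{2-N}-\rho_1^{2-N}}{2-N}\phi(r)^{N-1}$; a short sign check (splitting into $N>2$ and $N<2$) shows that the prefactor is strictly positive, and since $\phi$ is strictly increasing from $\rho_1$ to $\rho_2$ the function $\phi'$ is monotone and therefore attains its extrema at the endpoints. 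Evaluating at $r=0$ and $r=1$ and simplifying the resulting expressions yields exactly the stated formulas for $m_2$ and $M_2$.

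I do not expect any substantial obstacle. The proposition essentially records the fact that $\phi$ has been cooked up precisely to kill the first-order term in the radial Laplacian; the only mildly non-trivial point is the verification of the identity $\phi''\phi=(N-1)(\phi')^2$, which is however a routine calculation from the closed-form expressions in \eqref{Gl Def phi}.
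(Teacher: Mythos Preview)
Your proposal is correct and follows essentially the same route as the paper: the paper also reduces part~(i) to the identity $\phi''=(N-1)(\phi')^2/\phi$ (verified directly from \eqref{Gl Def phi}) and then applies the chain rule exactly as you do, and for part~(ii) it writes out $\phi'$ explicitly in both cases and reads off the endpoint values. Your monotonicity argument for $\phi'$ via the relation $\phi'(r)=c\,\phi(r)^{N-1}$ is a slightly cleaner way to see that the extrema occur at $r=0,1$, but the content is identical.
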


\begin{proof}
Clearly $u$ solves \eqref{Gl eq} if, and only if, $w$ solves \eqref{Gl ODE 2}. One immediately checks that
$$
\phi'' = (\phi')^2\cdot \frac{N-1}{\phi} \quad\text{on }(0,1),\qquad
\phi(0)=\rho_1,\quad \phi(1)=\rho_2.
$$
Hence, the function $v:= w\circ \phi:[0,1]\to\R$ satisfies $v(0)=v(1)=0$ and
\begin{align*}
-v''(r)
 &= -w''(\phi(r))\phi'(r)^2-w'(\phi(r))\phi''(r)
  = \phi'(r)^2 \left(-w''(\phi(r))-\frac{N-1}{\phi(r)}w'(\phi(r)) \right) \\
 &= \phi'(r)^2 f_\la(\phi(r),w(\phi(r)),|w'(\phi(r))|)
  = h_\la(r,v(r),|v'(r)|)
\end{align*}
so that $v$ is a classical solution of \eqref{Gl eq ODE}. Similarly it can be checked that the opposite implication is true and we obtain part (i). The estimate from part (ii) follows from
\begin{align*}
 &\phi'(r) = \rho_1^{1-r}\rho_2^r\ln\Big(\frac{\rho_2}{\rho_1}\Big)
  &&\text{for }r\in [0,1]\text{ if }N=2,\\
 &\phi'(r) = \frac{\rho_1^{2-N}-\rho_2^{2-N}}{N-2}
              \Big(\rho_1^{2-N}+r(\rho_2^{2-N}-\rho_1^{2-N})\Big)^{(N-1)/(2-N)}
  &&\text{for }r\in [0,1]\text{ if }N\ne 2.
\end{align*}
\end{proof}

By Proposition \ref{Prop 1} (i) the original problem \eqref{Gl eq} is equivalent to the boundary value problem \eqref{Gl eq ODE} on the interval $(0,1)$ and we may content ourselves with proving the ODE version of Theorem \ref{Thm 1}. To this end let us fix the properties of the function $h_\la$ from \eqref{Gl Definition h} which correspond to the assumptions (A1), (A2), (A3) for the function $f_\la$. Setting
\begin{equation} \label{Gl def mMKc}
\begin{aligned}
 &m:=m_1m_2^2,\quad\qquad M:=M_1M_2^2\quad\text{and}
 %,\qquad c(r):=\phi'(r)^2 c_1(\phi(r))
 \\
 &K(\la,s):= 2m_2^{-1}\|\phi''\|_\infty+(sm_2^{-2}\|\phi''\|_\infty+m_2^{-1}+M_2) K_1(\la,s/m_2)	
\end{aligned}
\end{equation}
for $m_1,M_1,K_1$ as in (A2) and $m_2,M_2,\phi$ as in Proposition \ref{Prop 1} (ii) we obtain the following:
\begin{itemize}
\item[(B1)]
  The map
  \[
  h:\R\times[0,1]\times\R\times\R_{\ge0},\quad (\la,r,z,\xi)\mapsto h_\la(r,z,\xi)
  \]
  is continuous, and it is continuously differentiable with respect to $r$ and $\xi$. Moreover, for all
  $\la,s>0$ there is a $K(\la,s)>0$ such that
  $$
    |\pa_r h_\la(r,z,\xi)|,\ |\pa_\xi h_\la(r,z,\xi)| \le K(\la,s)|h_\la(r,z,\xi)|
  $$
  for $r\in [\rho_1,\rho_2]$ and $0\leq |z|,\xi \le s$.
\item[(B2)] For all $\la\ge 0$, $r\in [0,1]$, $z\in\R$, $\xi\in\R_{\ge 0}$  we have
  $$
    m (\la |z|^q+|z|^p) \le z h_\la(r,z,\xi) \le M (\la|z|^q+|z|^p)
  $$
\item[(B3)] For all $\la\le 0$, $r\in [0,1]$, $z\in\R$, $\xi\in\R_{\ge 0}$ we have
  $$
    M\la |z|^q + m|z|^p \le z h_\la(r,z,\xi) \le m \la |z|^q+ M |z|^p.
  $$
%   Moreover, given sequences $\la_n\le 0$, $t_n\to\infty$ with $\la_nt_n^{q-p}\to 0$ as $n\to\infty$ there
% holds:
%   $$
%   t_n^{1-p} h(\la_n,r,t_nz,\xi) \to c(r)|z|^{p-2}z \quad\text{as }n\to\infty
%   $$
%   uniformly for $(r,z,\xi)\in[0,1]\times[0,1]\times\R_{\ge 0}$ where $c:[0,1]\to[m,M]$ is a continuous function.
\end{itemize}
In Theorem \ref{Thm 2} we will formulate our results concerning the boundary value problem \eqref{Gl eq ODE} for all nonlinearities $h_\la$ satisfying the assumptions (B1), (B2), (B3). As before we find a statement about the energy of the constructed solutions once we require that the equation is variational and satisfies the following condition:
\begin{itemize}
\item[(B4)] $h_\la(r,z,\xi)=h_\la(r,z)$ and the function $H_\la(r,z):=\int_0^z h_\la(r,s)\ds$ satisfies
  \begin{align*}
    \liminf_{|z|\to\infty} \inf_{r\in [0,1]} \frac{h_\la(r,z)z}{H_\la(r,z)} > 2 \quad\text{if }\la\in\R,
   \qquad
   \limsup_{|z|\to 0} \sup_{r\in [0,1]} \frac{h_\la(r,z)z}{H_\la(r,z)} < 2 \quad\text{if }\la>0.
  \end{align*}
\end{itemize}
In case (B4) holds the energy functional $J_\la:Y\to\R$ associated to \eqref{Gl eq ODE} is given by
$$
  J_\la(v) := \frac12 \int_0^1 v'(r)^2\,dr - \int_0^1 H_\la(r,v(r))\,dr
$$
where $Y:= C^1([0,1],\R)$. In the statement of Theorem \ref{Thm 2} we need the following subsets of $\R\times Y$ which are the one-dimensional analogues of the subsets $\cS,\cS_j^\pm$ of $\R\times X$:
\begin{align*}
\Si
 &= \{(\la,v)\in\R\times Y:v\ne0\text{ and }(\la,v)\text{ solves }\eqref{Gl eq ODE}\}, \\
\Si_j^\pm
&= \{(\la,v)\in\Si:v\text{ has precisely } j+1 \text{ nodal intervals } I_0,\ldots,I_j,
 \sign(v|_{I_k})=\pm (-1)^k\}.
\end{align*}
Then the analogue of Theorem \ref{Thm 1} for the boundary value problem \eqref{Gl eq ODE} then reads as follows.

\begin{thm} \label{Thm 2}
 Let $h_\la$ satisfy (B1),(B2),(B3) for $1<q<2<p<\infty$. Then there are maximal connected sets $\cD_j^\pm\subset\Si_j^\pm$, $j\in\N_0$, having the following properties:
 \begin{itemize}
   \item[(i)] $\ov{\cD_j^\pm}\setminus\cD_j^\pm = \{(0,0)\}$ for all $j\in\N_0$.
   \item[(ii)] $\ov{\cD_j^\pm}\cap \ov{\cD_k^\pm} = \ov{\cD_j^+}\cap \ov{\cD_j^-} = \{(0,0)\}$ for all $j,k\in\N_0$ with $j\ne k$.
   \item[(iii)] There are sequences $(\La_j^\pm)_{j\in\N_0}$ in $(0,\infty)$ tending to infinity such  that $\pr(\cD_{j}^\pm)=(-\infty,\La_j^\pm]$.
   \item[(iv)] We  have
       $\liminf_{j\to\infty} \cD_j^\pm = \limsup_{j\to\infty} \cD_j^\pm
        = \R_{\ge 0}\times\{0\}.$
   Furthermore:
    \begin{itemize}
    \item[(a)] For all $\la>0$ the point $(\la,0)$ is a bifurcation point but not a branching point for \eqref{Gl eq ODE}, and there is a number $j^{\pm}(\la)\in\N_0$ such that for all $j\ge j^{\pm}(\la)$ there are solutions $(\un{v}_j^\pm,\la)\in\cD_j^\pm$ with   $\|\un{v}_j^\pm\|_{\cC^1}\to 0$ as $j\to\infty$.
    \item[(b)]  Every $\la\in\R$ is a bifurcation point from infinity but not a    branching point from infinity for \eqref{Gl eq ODE}, and setting $j^\pm(\la)=0$ for $\la\le 0$, for all $j\ge j^{\pm}(\la)$ there are solutions $(\ov{v}_j^\pm,\la)\in \cD_j^\pm$ with    $\|\ov{v}_j^\pm\|_\infty\to \infty$ as $j\to\infty$.
    \end{itemize}
   \item[(v)] In case (B4) holds the solutions from (iv)(a),(b) satisfy
   $$
   J_\la(\un{v}_j^\pm)\to 0^-\quad \text{ and } J_\la(\ov{v}_j^\pm)\to \infty
      \qquad\text{as }j\to\infty.
   $$
 \end{itemize}
\end{thm}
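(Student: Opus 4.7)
The plan is to recast \eqref{Gl eq ODE} as a fixed point equation $v=T_\la v$ on $Y$, where
\[
T_\la v(r) := \int_0^1 G(r,s)\,h_\la\bigl(s,v(s),|v'(s)|\bigr)\ds
\]
with $G$ the Dirichlet Green's function of $-\partial_r^2$ on $(0,1)$. Hypothesis~(B1) and the compactness of $G$ make $T_\la\colon Y\to Y$ completely continuous, and classical solutions of \eqref{Gl eq ODE} are exactly the fixed points; every nontrivial fixed point carries a well-defined nodal count and sign pattern, placing it in exactly one stratum $\Si_j^\pm$.

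From the time-map a priori bounds of Section~\ref{sec:a-priori} (Lemmas~\ref{Lem Apriori estimates I} and \ref{Lem Apriori estimates II}) I would extract, for each $j\in\N_0$ and each compact $\la$-window, explicit estimates $\alpha_j(\la)\le\|v\|_\infty\le\beta_j(\la)$ valid on $\Si_j^\pm$, together with uniform separation of distinct strata. The key asymptotics $\alpha_j(\la)\to 0$ (for $\la>0$) and $\beta_j(\la)\to\infty$ as $j\to\infty$ should drop out of the time-map formulas and immediately supply the small- and large-norm solutions required by (iv)(a),(b). With these bounds in hand I would define bounded open sets $\cO_j^\pm(\la)\subset Y$ whose closures capture the solutions in $\Si_j^\pm$ at parameter $\la$ and avoid $0$, blow-up, and the other strata. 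The Leray--Schauder degree $\deg(I-T_\la,\cO_j^\pm(\la),0)$ is then computed at a tractable parameter value---say $\la\ll 0$, where the $m|z|^p$ lower bound of (B3) dominates and a homotopy to an odd, purely superlinear model (whose nodal-solution structure can be pinned down by a shooting/time-map calculation) yields a nonzero degree in each stratum. Homotopy invariance then propagates this nonvanishing degree throughout $(-\infty,\La_j^\pm]$, where $\La_j^\pm:=\sup\pr(\Si_j^\pm)$ is forced finite and positive by the bounds and the superlinear growth. The continuum $\cD_j^\pm$ is defined as the connected component of $\Si_j^\pm\cup\{(0,0)\}$ through $(0,0)$, and a Whyburn-type component lemma combined with the non-vanishing degree and the strata separation shows it is maximal, nonempty, and projects onto the claimed interval.

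Properties~(i)--(iv) then follow: (i) and (ii) come from the strata separation together with the superlinear bound $m|z|^p$ of (B3), which forbids nontrivial accumulation of $\cD_j^\pm$ on $\R\times\{0\}$ away from $\la=0$; (iii) is the projection identity just established; and (iv)(a),(b) come from the asymptotics $\alpha_j,\beta_j\to 0,\infty$ combined with the observation that these small/large-norm solutions lie on \emph{distinct} continua $\cD_j^\pm$, rather than on a single branch, which is exactly what distinguishes a bifurcation point from a branching point. Part~(v) is extracted by the standard Ambrosetti--Rabinowitz manipulation: under~(B4), the lower bound on $h_\la z/H_\la$ at infinity converts $\|\ov v_j^\pm\|_\infty\to\infty$ into $J_\la(\ov v_j^\pm)\to\infty$, while the upper bound near $0$ together with $\|\un v_j^\pm\|_\infty\to 0$ forces $J_\la(\un v_j^\pm)<0$ and $J_\la(\un v_j^\pm)\to 0$. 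The hardest step is the degree computation itself: because the concave term at $\la>0$ prevents linearization at $v=0$, Rabinowitz-style global bifurcation is unavailable, so the whole argument hinges on locating a distant parameter regime where the degree in each nodal stratum can be computed explicitly, and then on maintaining the uniform separation of strata along the homotopy back---which is precisely what the quantitative estimates of Section~\ref{sec:a-priori} are engineered to deliver.
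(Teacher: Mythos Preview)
Your overall architecture matches the paper's: fixed-point formulation, Leray--Schauder degree in nodal strata, a-priori bounds from Section~\ref{sec:a-priori}, Whyburn's lemma to globalize. But two choices you make diverge from the paper in ways that create real difficulties.

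First, the degree computation. You propose to compute at $\la\ll 0$, arguing that the $m|z|^p$ lower bound of (B3) ``dominates'' there and that one can homotope to a purely superlinear model. This runs into a scale mismatch. By Lemma~\ref{Lem Apriori estimates I}, nontrivial $j$-nodal solutions at $\la\ll 0$ satisfy $\|v\|_\infty\gtrsim |\la|^{1/(p-q)}$, so they are very large; the $j$-nodal solution $\zeta_j$ of the model $-v''=m|v|^{p-2}v$ has norm of order $(j+1)^{2/(p-2)}$, independent of $\la$. Along your homotopy the effective parameter slides from $\la$ to $0$, the a-priori bounds slide with it, and the solution set sweeps through a huge range of norms. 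Building a single bounded open set $\cO$ on which the homotopy is admissible---never touching $0$, never leaking into $\Si_{j\pm 1}^\pm$, never losing the solution through the outer boundary---is exactly the hard part, and you have not indicated how to do it. The paper sidesteps this by computing at $\la=0$: there (B2) reads $m|z|^p\le zh_0\le M|z|^p$, the convex homotopy $t\,m|z|^{p-2}z+(1-t)h_0$ satisfies the \emph{same} (B2) bound with $\la=0$ uniformly in $t$, and the a-priori estimates of Lemma~\ref{Lem Apriori estimates II} at $\la=0$ apply verbatim along the whole homotopy. One then reads off the degree from the unique nondegenerate $j$-nodal solution $\zeta_j$ of the model (Proposition~\ref{Prop vjstar}). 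The choice $\la=0$ is not incidental; it is precisely where the concave term is absent, which is what makes the computation tractable.

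Second, your definition of $\cD_j^\pm$ as the component of $\Si_j^\pm\cup\{(0,0)\}$ through $(0,0)$ is circular: that $(0,0)$ lies in $\ov{\Si_j^\pm}$ is part of what must be proved. The paper instead defines $\cD_j^+$ as the maximal connected set in $\Si_j$ containing the (nonempty, by the degree argument) solution set $\{0\}\times(S_0^{-1}(0)\cap V_j^+)$, and then uses Whyburn plus the degree to show $(0,0)\in\ov{\cD_j^+}$ a posteriori. Finally, your justification of (i) for $\la>0$ invokes ``the superlinear bound $m|z|^p$ of (B3)'', but (B3) is for $\la\le 0$; for $\la>0$ the lower bound from (B2) is sublinear near $0$, and the reason solutions in a \emph{fixed} $\Si_j^\pm$ stay bounded away from $0$ is the time-map inequality $T_\la(\|v\|_\infty)\ge \tfrac{\sqrt{m}\,s_j}{2}$ of Lemma~\ref{Lem Apriori estimates II} together with $T_\la(\al)\to 0$ as $\al\to 0^+$.
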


%   As in Remark \ref{Bem Remark 1} let us point out that quantitative information about the
%   solutions belonging to $\cD_j^\pm$ (i.e. enclosures) can be derived from the a-priori estimates from
%   Lemma~\ref{Lem Apriori estimates I} and Lemma~\ref{Lem Apriori estimates II}.
    %For example we
    %find that every sequence of solutions $(v_j^\la,\la)$ in $\cD_j^\pm$ with $\la\to
    %-\infty$ satisfies $\|v_j^\la\|_\infty \sim |\la|^{1/(p-q)}$.

\section{A priori estimates} \label{sec:a-priori}

In this section we prove a-priori estimates for nontrivial solutions of 
\eqref{Gl eq ODE} depending on their number of zeros. For further reference we introduce the map
\begin{equation}\label{def:g_la}
g_\la(z) = \la|z|^{q-2}z+|z|^{p-2}z \qquad\text{for }z\in\R.
\end{equation}
The first result deals with the case $\la\le0$.

\begin{lem} \label{Lem Apriori estimates I}
 Assume that (B1), (B3) hold, and let $j\in\N_0$. Then there are positive numbers $D_j$ and $d$ independent of $j$ such that all $(\la,v)\in\Si_j$ with $\la\le 0$ satisfy
 $$
   d\big( (j+1)^{\frac{2}{p-2}}+|\la|^{\frac1{p-q}} \big)
   \le \|v\|_\infty \le D_j(1+|\la|^{\frac1{p-q}})\quad\text{and}\quad
   \|v'\|_\infty \le M g_{|\la|}(\|v\|_\infty),
 $$
 where $M$ is from (B3). Moreover, for every nodal interval $I$ of $v$ we have
 $$
   \|v\|_{L^\infty(I)} \ge d\big( |I|^{-\frac{2}{p-2}} +|\la|^{\frac1{p-q}} \big).
 $$
\end{lem}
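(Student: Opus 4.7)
The plan is to derive the four inequalities in the order stated, the first three by direct integration together with a two-case analysis and the last one by a matching argument across the $j+1$ nodal intervals. The key consequence I extract from (B3) at the outset is the pointwise majorization
\[
|h_\la(r,z,\xi)|\le M\bigl(|\la||z|^{q-1}+|z|^{p-1}\bigr)=Mg_{|\la|}(|z|),
\]
because the $\la$-terms in the two-sided inequality $M\la|z|^q+m|z|^p\le zh_\la\le m\la|z|^q+M|z|^p$ are nonpositive and of absolute value at most $M|\la||z|^q$. Since $g_{|\la|}$ is increasing on $[0,\infty)$, one has $|h_\la(r,v(r),|v'(r)|)|\le Mg_{|\la|}(\|v\|_\infty)$ on all of $[0,1]$. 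Fixing any critical point $t_0$ of $v$ (present inside every nodal interval) and integrating $v''=-h_\la$ from $t_0$ to $r$, together with $|r-t_0|\le 1$, yields the derivative bound $\|v'\|_\infty\le Mg_{|\la|}(\|v\|_\infty)$.

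For the two lower bounds I work inside a fixed nodal interval $I=(a,b)$, set $V_I:=\|v\|_{L^\infty(I)}$ attained at $t_0\in I$, and assume WLOG $v(t_0)=V_I>0$. Since $v'(t_0)=0$ and $v''(t_0)\le 0$, $h_\la(t_0,V_I,0)\ge 0$; combining with the right inequality of (B3), $V_I h_\la(t_0,V_I,0)\le m\la V_I^q+MV_I^p$, forces $V_I\ge(m|\la|/M)^{1/(p-q)}$. This furnishes the $|\la|^{1/(p-q)}$ contribution on every nodal interval. For the $|I|^{-2/(p-2)}$ contribution, I write $v'(r)=\int_r^{t_0}h_\la(s,v,|v'|)\,ds$ for $r\in(a,t_0)$, bound the integrand by $Mg_{|\la|}(V_I)$ in absolute value, and integrate once more from $a$ to $t_0$ (using $v(a)=0$), reaching
\[
V_I\le\tfrac{|I|^2M}{2}\bigl(|\la|V_I^{q-1}+V_I^{p-1}\bigr).
\]
If $V_I^{p-q}\ge|\la|$ the $V_I^{p-1}$ term dominates and one reads off $V_I\ge M^{-1/(p-2)}|I|^{-2/(p-2)}$; if instead $V_I^{p-q}<|\la|$ the concave term dominates, giving $V_I^{2-q}\le M|I|^2|\la|$, and combining with $V_I\ge(m/M)^{1/(p-q)}|\la|^{1/(p-q)}$ forces $|I|^{-2/(p-2)}\le C|\la|^{1/(p-q)}$, so the $|\la|^{1/(p-q)}$ term in the claim already absorbs a constant multiple of the $|I|^{-2/(p-2)}$ term. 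In either case $V_I\ge d\bigl(|I|^{-2/(p-2)}+|\la|^{1/(p-q)}\bigr)$, and pigeonhole on the $j+1$ nodal intervals in $[0,1]$ produces one of length at most $1/(j+1)$, yielding the claimed global lower bound.

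The upper bound $\|v\|_\infty\le D_j(1+|\la|^{1/(p-q)})$ is the main obstacle, because the single-interval inequality $V_I\le\tfrac{|I|^2M}{2}g_{|\la|}(V_I)$ produces only a lower bound on $V_I$, never an upper one. My plan is to couple the $j+1$ maxima $V_k$ via the $C^1$-matching of $v$ at the $j$ interior zeros. Multiplying $-v''=h_\la$ by $v'$ on each $I_k$ and using assumption (B1) to absorb the $r$- and $\xi$-dependence of $h_\la$ yields, up to controlled errors, a first-integral-type identity whose value at a zero $r_k$ is comparable to $\int_0^{V_k}g_\la(z)\,dz$; continuity of $(v')^2$ across each interior zero then forces the adjacent maxima to be comparable, and combined with $\sum_k|I_k|\le 1$ and the scaling $V_k^{(p-2)/2}|I_k|\gtrsim 1$ read off in the first case above, this bounds $\max_k V_k$ from above by a constant $D_j$ times $1+|\la|^{1/(p-q)}$, the additive $|\la|^{1/(p-q)}$ accounting for intervals falling into the second case. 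Should the $\xi$-dependence obstruct a clean first-integral, a fallback is a contradiction/rescaling argument: a sequence $(\la_n,v_n)\in\Sigma_j$ violating the asserted upper bound rescales to a limit with strictly fewer than $j+1$ nodal intervals, contradicting $v_n\in\Sigma_j$.
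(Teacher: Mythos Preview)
Your arguments for the derivative bound and for the two lower bounds are correct, though they proceed differently from the paper: there one multiplies the equation by $v$, integrates over the nodal interval $I$, and invokes the Poincar\'e inequality $\int_I (v')^2\ge\pi^2|I|^{-2}\int_I v^2$ to obtain in one stroke
\[
\pi^2|I|^{-2}+m|\la|\,V_I^{q-2}\le M\,V_I^{p-2},
\]
from which both pieces of the lower bound are extracted algebraically. Your pointwise argument at the interior maximum together with the double integration is an equally valid alternative.

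The upper bound is where the proposal has a genuine gap. Your matching plan presupposes that consecutive nodal intervals share a common zero at which the value of $(v')^2$ can be compared across the two sides; but for $\la<0$ solutions may have dead cores, i.e.\ intervals on which $v\equiv 0$ separating two nodal intervals, so that $v'$ vanishes at every endpoint of every nodal interval and there is nothing to match. The scaling $V_k^{(p-2)/2}|I_k|\gtrsim 1$ combined with $\sum_k|I_k|\le 1$ also points the wrong way: it tells you some $V_k$ is large, never that all are bounded. So the first-integral coupling cannot deliver the upper bound.

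Your fallback rescaling is the right strategy and is precisely what the paper does, but the contradiction is not that ``the limit has fewer than $j{+}1$ nodal intervals'' (nothing prevents that). Rescale $\tilde v_n(r)=t_n^{-1}v_n(r_n+t_n^{-(p-2)/2}r)$ with $t_n=\|v_n\|_\infty\to\infty$ and $r_n$ a maximizer of $|v_n|$. A subsequence converges in $C^1_{\mathrm{loc}}$ to $\tilde v\in C^{1,1}(J)$ on an \emph{unbounded} interval $J$, with $|\tilde v(0)|=1$, $|\tilde v|\le 1$, at most $j$ sign changes, and (using (B3) and $t_n(1+|\la_n|^{1/(p-q)})^{-1}\to\infty$ to kill the $q$-term) the inequality $-\tilde v''\tilde v\ge m|\tilde v|^p$ a.e. Comparison with the autonomous problem $-\zeta''=m|\zeta|^{p-2}\zeta$ shows each nodal interval of $\tilde v$ has finite length; since there are finitely many, $\tilde v$ vanishes identically on some maximal unbounded subinterval $J'\subsetneq J$. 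Its endpoint $\eta$ is also an endpoint of a nodal interval, and there $\tilde v'(\eta)=0$ from the $J'$ side while the strict concavity/convexity on the nodal side forces $\tilde v'(\eta)\ne 0$. That is the actual contradiction.
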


\begin{proof}
Let $(\la,v)\in\Si_j^\pm$ with $\la\le 0$ and let $I$ be a nodal interval of $v$. Multiplying the differential equation \eqref{Gl eq ODE} with $v$ and integrating the resulting equation over $I$ gives
$$
\int_I v'(r)^2\dr
 = \int_I h_\la(r,v(r),v'(r))v(r)\dr
 \le m \la \int_I |v(r)|^q \dr + M \int_I |v(r)|^p\dr.
$$
Using the fact that $\pi^2|I|^{-2}$ is the smallest Dirichlet eigenvalue of the differential operator $-\frac{d}{dr^2}$ on $I$ we obtain the estimate
\begin{align*}
\pi^2|I|^{-2} + m |\la|\|v\|_{L^\infty(I)}^{q-2}
 &\le \frac{\int_I v'(r)^2\dr + m|\la|\|v\|_\infty^{q-2}\int_I v(r)^2\dr}
           {\int_I v(r)^2\dr} \\
 &\le \frac{\int_I v'(r)^2\dr + m|\la|\int_I |v(r)|^q\dr}{\int_I v(r)^2\dr} \\
 &\le \frac{M \int_I |v(r)|^p\dr}{\int_I v(r)^2\dr} \\
 &\le M \|v\|_{L^\infty(I)}^{p-2}.
\end{align*}
From this we infer
\begin{align*}
\|v\|_{L^\infty(I)}
 = \Big( \frac{m}{M}\Big)^{\frac1{p-q}}|\la|^{\frac1{p-q}}\cdot \al\qquad \text{where }
  \al^{p-2}-\al^{q-2}
   \ge |I|^{-2}|\la|^{\frac{2-p}{p-q}}\cdot\pi^2 m^{\frac{2-p}{p-q}}M^{\frac{q-2}{p-q}}
\end{align*}
and thus the lower estimate follows from
\begin{align*}
\|v\|_{L^\infty(I)}
 &\ge \Big( \frac{m}{M}\Big)^{\frac1{p-q}}|\la|^{\frac1{p-q}} \cdot
      \max\Big\{1,\Big( |I|^{-2}|\la|^{\frac{2-p}{p-q}} \cdot
            \pi^2m^{\frac{2-p}{p-q}}M^{\frac{q-2}{p-q}} \Big)^{\frac1{p-2}}\Big\} \\
 &= \Big( \frac{m}{M}\Big)^{\frac1{p-q}}|\la|^{\frac1{p-q}} \cdot
      \max\Big\{ 1, |I|^{-\frac2{p-2}}|\la|^{\frac1{p-q}} \cdot
      \pi^{\frac2{p-2}} m^{-\frac1{p-q}}M^{\frac{q-2}{(p-2)(p-q)}} \Big\} \\
 &\ge \Big( \frac{m}{M}\Big)^{\frac1{p-q}}|\la|^{\frac1{p-q}} \cdot
      \frac{\pi^{\frac2{p-2}}m^{-\frac1{p-q}}M^{\frac{q-2}{(p-2)(p-q)}}} {1+\pi^{\frac2{p-2}} m^{-\frac1{p-q}}M^{\frac{q-2}{(p-2)(p-q)}}} \cdot
      (1 + |I|^{-\frac2{p-2}}|\la|^{-\frac1{p-q}} ) \\
 &= \frac{\pi^{\frac2{p-2}} M^{-\frac1{p-2}}}{1+\pi^{\frac2{p-2}}
      m^{-\frac1{p-q}}M^{\frac{q-2}{(p-2)(p-q)}}} \cdot (|\la|^{\frac1{p-q}} +
      |I|^{-\frac2{p-2}})
\end{align*}
where we have used the inequality $\max\{1,xy\}\geq \frac{y}{1+y}(1+x)$ for all $x,y>0$. This proves the last assertion of the Lemma. Using the fact that a solution $(\la,v)\in\Si_j^\pm$ has at least one nodal interval of length $|I|\le \frac{1}{j+1}$ we obtain the lower estimate for $\|v\|_\infty$ from 
$\|v\|_\infty\ge \|v\|_{L^\infty(I)}$. The upper bound for $\|v'\|_\infty$ follows from
\begin{align*}
\|v'\|_\infty
 \le \int_0^1 |v''(r)|\dr
 \le \int_0^1 M\left(|\la||v(r)|^{q-1}+|v(r)|^{p-1}\right)\dr
 \le M g_{|\la|}(\|v\|_\infty).
\end{align*}
Recall that $g_{|\lambda|}$ was defined in \eqref{def:g_la}.

The upper bound for $\|v\|_\infty$ is proved by a blow-up argument. Let us assume for contradiction that there exists a sequence $(\la_n,v_n)$ in $\Si_j^\pm$ with $\la_n\le0$ and ${\|v_n\|_\infty(1+|\la_n|^{\frac1{p-q}})^{-1}\to \infty}$ as $n\to\infty$. We set
$$
  \tilde v_n(r):= t_n^{-1} v_n(r_n+t_n^{-\frac{p-2}2}r)
$$
where $t_n := \|v_n\|_\infty$ and $r_n\in [0,1]$ denotes a maximizer of $|v_n|$. Then $(t_n)_n$ is a sequence
tending to $+\infty$, and we have $|\tilde v_n(0)|=1$ as well as $|\tilde v_n(r)|\le1$, the latter being
defined whenever $0\le r_n+t_n^{-(p-2)/2}r\le1$. Furthermore,
\begin{align*}
-\tilde v_n''(r)
 = t_n^{1-p} h_{\la_n}(r_n+t_n^{-\frac{p-2}{2}} r,t_n\tilde v_n(r),
    |v_n'(r_n+t_n^{-\frac{p-2}{2}}r)|).
\end{align*}
% From assumption (B3)
% we infer that the sequence of functions on the right hand side is bounded. Assuming without loss of
% generality that $(r_n)$ converges to some $r^*\in [0,1]$ the Arzel\`{a}-Ascoli Theorem provides a subsequence
% which converges locally uniformly to a function $\tilde v$ which is defined on some unbounded interval $I$
% containing $0$ and which changes sign at most $j$ times on $I$. Moreover $\tilde v$ satisfies $|\tilde
% v(r)|\le 1$ for all $r\in I$, $\tilde v(0)=1$ and assumption (B3) implies $-\tilde v''= c(r^*)|\tilde
% v|^{p-2}\tilde v$ on $I$. From $0<m\le c(r^*)\le M$ we infer that $\tilde v$ is periodic and sign-changing
% (see the proof of Proposition \ref{Prop App A 3}) which contradicts the fact that $\tilde v$ changes sign at
% most $j$ times on $I$. Hence, the assumption was false and the result follows.

From assumption (B3) we infer that the sequence of functions on the right hand side is bounded. The Arzel\`{a}-Ascoli Theorem provides a subsequence of $(\tilde v_n)_n$ which converges locally uniformly along with its first derivatives to a function 
$\tilde v\in \cC^1(J)$ which is defined on some unbounded interval $J$ containing $0$ and which changes sign at most $j$ times on $J$. Since the sequence 
$(\|\tilde v_n''\|_\infty)_n$ is bounded we even have $\tilde v\in \cC^{1,1}(J)$, i.e.\ $\tilde v'$ is Lipschitz continuous so that $\tilde v''$ exists almost everywhere in $J$. Moreover $\tilde v$ satisfies $|\tilde v(r)|\le 1$ for all $r\in J$,
$\tilde v(0)=1$ as well as
$$
  m |\tilde v|^p \leq -\tilde v'' \tilde v %\leq M|\tilde v|^p
  \quad\text{a.e. on }J.
$$
Here we used the lower estimate from assumption (B3) and $t_n\to\infty$. In particular $\tilde v$ is strictly concave on nodal intervals where $\tilde v>0$ and it is strictly convex on nodal intervals where $\tilde v<0$ holds. Now let us show that each nodal interval is bounded. Indeed, if $I$ is such an interval and 
$|\tilde v(\xi)|=\|\tilde v\|_{L^\infty(I)}>0$ then a comparison between $\tilde v$ and
the unique solution of the initial value problem $-\zeta''=m|\zeta|^{p-1}\zeta,\zeta'(\xi)=0,\zeta(\xi)=\tilde v(\xi)$ 
provides a finite upper bound for the length of $I$. As a consequence the union of the nodal intervals is bounded so that $\tilde v$ has to vanish identically on some maximal unbounded interval $J'\subset J,J'\neq J$, the boundary of which has a common point $\eta\in\partial J'$ with some nodal interval. This, however, implies
$\tilde v(\eta)=\tilde v'(\eta)=0$ due to $\tilde v|_J\equiv 0$ as well as 
$\tilde v'(\eta)\neq 0$ due to the strict concavity or convexity on the neighbouring nodal interval. Hence, the assumption was false and the result follows.
\end{proof}

\begin{bem} \label{Bem apriori bounds vs deadcores}
 From the estimates in Lemma~\ref{Lem Apriori estimates I} we deduce that nodal intervals of solutions of \eqref{Gl eq ODE} cannot degenerate within the parameter range $\lambda\in (-\infty,0]$. More precisely we observe that shrinking a nodal interval (i.e. $|I|\to 0$) of $j$-nodal solutions 
 can only occur for ${\lambda\to -\infty}$. In addition, the second estimate in the Lemma implies that there
 is no sequence of solutions $(\lambda_k,v_k)$ with nodal intervals $I_k$ such that $\|v_k\|_{L^\infty(I_k)}$ tends to zero as $k\to\infty$. This is quite remarkable given the fact that dead-core solutions are expected to exist for sufficiently negative $\lambda$. We will use these observations in the proof of Theorem~\ref{Thm 1D case} part~(ii).
\end{bem}

Before we can prove the a-priori estimates for nonnegative $\la$ we provide a technical result which gives some elementary information about the shape of any nontrivial solution of \eqref{Gl eq ODE}. As usual a point $x_0\in [0,1]$ will be called a node of $v\in Y$ if $v(x_0)=0$ and $v'(x_0)\ne 0$.
 The proof of the following Proposition is based on ideas taken from
 \cite[p.~60-61]{WaRe_Radial_solutions_of}.

\begin{prop} \label{Prop 2}
  Let (B1), (B2) hold and let $(\la,v)\in\Si$ with $\la\ge 0$. Then the function $v$ has a finite number of zeros and each zero is a node. In particular the length of all nodal intervals sum up to 1. Moreover, each nodal interval $I$ of $v$ contains a uniquely determined point $\xi\in\mathring{I}$ having the property
  \begin{equation*}
  |v(\xi)| = \|v\|_{L^\infty(I)},\quad
 % |v|'> 0\;\;\text{on }[\eta,\xi),\quad
  v'(\xi)=0,\quad
  v'\ne 0\;\;\text{on } I\sm\{\xi\}.
  \end{equation*}
  Moreover, $\max_{I} |v'|$ is attained on $\pa I$.
\end{prop}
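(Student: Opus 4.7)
The plan is to combine a local convexity analysis on each nodal interval (based on (B2)) with a Gronwall-type energy estimate based on (B1) that rules out zeros of $v$ at which $v'$ also vanishes.

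First I would fix a nodal interval $I=(a,b)$, say with $v>0$ on $I$. By the lower bound in (B2), $h_\la(r,v(r),|v'(r)|)>0$ on $I$, so the ODE gives $v''<0$: $v$ is strictly concave on $\overline I$ and $v'$ is strictly decreasing. This produces a unique interior critical point $\xi$ (the maximizer) and forces $v'(a)>0$ and $v'(b)<0$, because otherwise $v'$ would keep a single sign on $I$, contradicting $v(a)=v(b)=0$ with $v>0$ in between. The same monotonicity shows $|v'|$ is decreasing on $[a,\xi]$ and increasing on $[\xi,b]$, so $\max_{\overline I}|v'|$ is attained on $\pa I$.

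Next I would prove that every zero of $v$ is a node. Suppose for contradiction that $v(r_0)=v'(r_0)=0$ for some $r_0\in[0,1]$, and define
\[
H_\la(r,z):=\int_0^z h_\la(r,s,0)\,ds,\qquad E(r):=\tfrac12 v'(r)^2+H_\la(r,v(r)).
\]
From (B2) one gets $H_\la(r,v)\ge (m/p)(\la|v|^q+|v|^p)\ge 0$, with equality only at $v=0$. Using the ODE,
\[
E'(r)=v'(r)\bigl[h_\la(r,v,0)-h_\la(r,v,|v'|)\bigr]+\pa_r H_\la(r,v(r)).
\]
Condition (B1), applied as a Gronwall estimate in the $\xi$-variable, yields $|h_\la(r,v,0)-h_\la(r,v,|v'|)|\le C|v'|\,|h_\la(r,v,0)|$, and integration in $z$ of $|\pa_r h_\la|\le K|h_\la|$ yields $|\pa_r H_\la(r,v)|\le (KM/m)H_\la(r,v)$. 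Combined with $(v')^2\le 2E$, $H_\la\le E$, and the uniform bound on $|h_\la(r,v(r),0)|$ on the compact set traced by $(r,v(r))$, these estimates give $|E'(r)|\le C\,E(r)$ on $[0,1]$. Since $E(r_0)=0$, Gronwall's inequality forces $E\equiv 0$, hence $v\equiv 0$, contradicting $(\la,v)\in\Si$.

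Every zero of $v$ being a node makes the zero set isolated and therefore finite on $[0,1]$. At each such zero $v$ must change sign, for otherwise it would be a local extremum in the interior with $v'=0$, which is now excluded. Consequently $[0,1]$ decomposes outside the finite zero set into a disjoint union of finitely many nodal intervals whose lengths sum to $1$, finishing the proof.

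The principal obstacle is controlling $E'$ when $h_\la$ depends on $\xi=|v'|$: condition (B1) is tailored precisely so that the mismatch $h_\la(r,v,0)-h_\la(r,v,|v'|)$ and the drift $\pa_r H_\la$ can both be absorbed into $E$, closing the Gronwall loop. Without such a structural bound on the derivatives of $h_\la$, the dead-core-type degenerate zeros (which actually occur for $\la<0$) could not be ruled out.
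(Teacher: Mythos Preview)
Your proof is correct. The concavity analysis on a nodal interval is exactly what the paper does: (B2) gives $-v''v>0$ where $v\ne0$, hence strict concavity of $|v|$ and a unique interior critical point; one minor point is that your sentence ``otherwise $v'$ would keep a single sign'' should be read as ``if $v'(a)\le0$ then $v'$ is $<0$ on $(a,b]$ by strict monotonicity, contradicting $v>0$'', which is indeed the intended argument.

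For the node statement the paper proceeds differently. It keeps the actual gradient in the primitive, setting
\[
\eta(r)=\int_0^{v(r)} h_\la(r,z,|v'(r)|)\,dz,
\]
multiplies the equation by $2v'$ and integrates from $r_0$ to obtain the identity
\[
-v'(r)^2=2\eta(r)-2\int_{r_0}^{r}\!\!\int_0^{v(t)}\bigl(\pa_r h_\la+\pa_\xi h_\la\,\sign(v')v''\bigr)\,dz\,dt,
\]
then picks $r_n$ maximizing $\eta$ on $[r_0-\tfrac1n,r_0+\tfrac1n]$ and uses (B1) to bound the double integral by $K(1+L)|r_n-r_0|\,\eta(r_n)$, forcing $\eta(r_n)=0$ for large $n$ and hence $v\equiv0$ near $r_0$. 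Your route---freezing $\xi=0$ in the primitive, writing $E=\tfrac12(v')^2+H_\la(r,v)$, and closing a global Gronwall inequality $|E'|\le CE$---is a genuine alternative. It is arguably cleaner (a single differential inequality instead of a local-maximum argument), at the price of the preliminary Gronwall-in-$\xi$ comparison $|h_\la(r,v,\xi)|\le e^{K\xi}|h_\la(r,v,0)|$, which is where you absorb the $\xi$-dependence that the paper handles inside the double integral. Both arguments use exactly the same structural inputs (B1) and (B2); your bound $|\pa_r H_\la|\le (KM/m)H_\la$ can in fact be sharpened to $K H_\la$, since $h_\la(r,\cdot,0)$ has the sign of its second argument by (B2).
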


\begin{proof}
The second claim follows from the observation that assumption (B2) implies
$$
-v''(r)v(r)=h_\la(r,v(r),|v'(r)|)v(r)>0\quad\text{whenever }\la\ge 0,v(r)\ne 0.
$$
Hence, $|v|$ is strictly concave on every nodal interval which gives the result.

Now it remains to prove that each zero of a given solution $v$ is a node. To this end it suffices to prove $v\equiv0$ if $v$ satisfies $v(r_0)=v'(r_0)=0$ for some $r_0\in[0,1]$. We define $\eta(r):= \int_0^{v(r)}h_\la(r,z,|v'(r)|)\dz$  for $r\in [0,1]$. Multiplying the differential equation with $2v'$ and integrating from $r_0$ to $r$ gives
\begin{align*}
-v'(r)^2
 &= 2\int_{r_0}^r h_\la(t,v(t),|v'(t)|)v'(t)\dt  \\
 &= 2\int_{r_0}^r \eta'(t)\dt - 2\int_{r_0}^r \int_0^{v(t)}
    \pa_rh_\la(t,z,|v'(t)|)+\pa_\xi h_\la(t,z,|v'(t)|)\sign(v'(t))v''(t) \dz\dt \\
 &= 2\eta(r) - 2\int_{r_0}^r \int_0^{v(t)}
      \pa_rh_\la(t,z,|v'(t)|)+\pa_\xi h_\la(t,z,|v'(t)|)\sign(v'(t))v''(t) \dz\dt.
\end{align*}
Notice that differentiation under the integral is justified since (B1) implies that the functions $\pa_rh_\la$ and $\pa_\xi h_\la$ are bounded on compact sets so that the dominated convergence theorem may be applied to the sequence of difference quotients. From (B2) and $\la\ge 0$ we infer that $\eta(r)$ is positive whenever $v(r)\ne 0$. Now choose a sequence $(r_n)$ in $[0,1]$ with $|r_n-r_0|\le \frac{1}{n}$ and
$\eta(r_n)=\max\{\eta(r) : r\in[0,1],|r-r_0|\le \frac{1}{n}\}$. Then we have $\eta(r_n)\to 0$ and assumption (B1) implies for $L:= \max \{|v''(t)|:t\in[0,1]\}$ and some positive number $K$
\begin{align*}
0 &\ge -\frac{1}{2}v'(r_n)^2 \\
  &= \eta(r_n) - \int_{r_0}^{r_n} \int_0^{v(t)}
     \pa_rh_\la(t,z,|v'(t)|) + \pa_\xi h_\la(t,z,|v'(t)|)\sign(v'(t)) v''(t) \dz \dt \\
  &\ge \eta(r_n) - \int_{\min\{r_0,r_n\}}^{\max\{r_0,r_n\}}
        \int_{\min\{0,v(t)\}}^{\max\{0,v(t)\}}
        |\pa_rh_\la(t,z,|v'(t)|)| + L|\pa_\xi h_\la(t,z,|v'(t)|)| \dt \\
  &\ge \eta(r_n)
        - K(1+L) \int_{\min\{r_0,r_n\}}^{\max\{r_0,r_n\}} \int_{\min\{0,v(t)\}}^{\max\{0,v(t)\}}|h_\la(t,z,|v'(t)|)|\dz \dt \\
  &= \eta(r_n)
        - K(1+L) \int_{\min\{r_0,r_n\}}^{\max\{r_0,r_n\}}
        \int_0^{v(t)} h_\la(t,z,|v'(t)|)\dz \dt \\
  &= \eta(r_n)  - K(1+L) \int_{\min\{r_0,r_n\}}^{\max\{r_0,r_n\}} \eta(t) \dt \\
  &\ge \eta(r_n)\cdot (1- K(1+L)|r_n-r_0|).
\end{align*}
This implies $\eta(r_n)=0$ and thus $\eta\equiv0$ on $[0,1]\cap[r_0-\frac{1}{n},r_0+\frac{1}{n}]$ for sufficiently large $n$. Hence, $v$ is trivial in a neighbourhood of $r_0$ and this implies $v\equiv 0$ on $[0,1]$.

Now it remains to prove that each zero of a given solution $v$ is a node. To this end it suffices to prove $v\equiv0$ if $v$ satisfies $v(r_0)=v'(r_0)=0$ for some $r_0\in[0,1]$. We define $\eta(r):= \int_0^{v(r)} h_\la(r,z,|v'(r)|)\dz$  for $r\in [0,1]$. Multiplying the differential equation with $2v'$ and integrating from $r_0$ to $r$ gives
\begin{align*}
-v'(r)^2
 &= 2\int_{r_0}^r h_\la(t,v(t),|v'(t)|)v'(t)\dt  \\
 &= 2\int_{r_0}^r \eta'(t)\dt - 2\int_{r_0}^r \int_0^{v(t)}
    \pa_rh_\la(t,z,|v'(t)|)+\pa_\xi h_\la(t,z,|v'(t)|)\sign(v'(t))v''(t) \dz\dt \\
 &= 2\eta(r) - 2\int_{r_0}^r \int_0^{v(t)}
      \pa_rh_\la(t,z,|v'(t)|)+\pa_\xi h_\la(t,z,|v'(t)|)\sign(v'(t))v''(t) \dz\dt.
\end{align*}
Notice that differentiation under the integral is justified since (B1) implies that the functions $\pa_rh_\la$ and $\pa_\xi h_\la$ are bounded on compact sets so that the dominated convergence theorem may be applied to the sequence of difference quotients. From (B2) and $\la\ge 0$ we infer that $\eta(r)$ is positive whenever $v(r)\ne 0$. Now choose a sequence $(r_n)$ in $[0,1]$ with $|r_n-r_0|\le \frac{1}{n}$ and
$\eta(r_n)=\max\{\eta(r) : r\in[0,1],|r-r_0|\le \frac{1}{n}\}$. Then we have $\eta(r_n)\to 0$ and assumption (B1) implies for $L:= \max \{|v''(t)|:t\in[0,1]\}$ and some positive number $K$
\begin{align*}
0 &\ge -\frac{1}{2}v'(r_n)^2 \\
  &= \eta(r_n) - \int_{r_0}^{r_n} \int_0^{v(t)}
     \pa_rh_\la(t,z,|v'(t)|) + \pa_\xi h_\la(t,z,|v'(t)|)\sign(v'(t)) v''(t) \dz \dt \\
  &\ge \eta(r_n) - \int_{\min\{r_0,r_n\}}^{\max\{r_0,r_n\}}
        \int_{\min\{0,v(t)\}}^{\max\{0,v(t)\}}
        |\pa_rh_\la(t,z,|v'(t)|)| + L|\pa_\xi h_\la(t,z,|v'(t)|)| \dt \\
  &\ge \eta(r_n)
        - K(1+L) \int_{\min\{r_0,r_n\}}^{\max\{r_0,r_n\}} \int_{\min\{0,v(t)\}}^{\max\{0,v(t)\}}|h_\la(t,z,|v'(t)|)|\dz \dt \\
  &= \eta(r_n)
        - K(1+L) \int_{\min\{r_0,r_n\}}^{\max\{r_0,r_n\}}
        \int_0^{v(t)} h_\la(t,z,|v'(t)|)\dz \dt \\
  &= \eta(r_n)  - K(1+L) \int_{\min\{r_0,r_n\}}^{\max\{r_0,r_n\}} \eta(t) \dt \\
  &\ge \eta(r_n)\cdot (1- K(1+L)|r_n-r_0|).
\end{align*}
This implies $\eta(r_n)=0$ and thus $\eta\equiv0$ on $[0,1]\cap[r_0-\frac{1}{n},r_0+\frac{1}{n}]$ for sufficiently large $n$. Hence, $v$ is
trivial in a neighbourhood of $r_0$ and this implies $v\equiv 0$ on $[0,1]$.
\end{proof}

Now let us prove the a-priori estimates for positive $\la$. The idea of our approach comes from the analysis of the one-dimensional Ambrosetti-Brezis-Cerami problem
\begin{equation} \label{Gl 1D ABC-problem}
\left\{
\begin{aligned}
&-u'' = \la |u|^{q-2}u + |u|^{p-2}u \quad\text{in }(0,1),\\
&u(0)=u(1)=0,
\end{aligned}
\right.
\end{equation}
where the existence and the precise shape of the solution continua $\cD_j^\pm$ enjoying the properties (i)-(v) from Theorem~\ref{Thm 2} can be proved using the so-called energy method or time-map technique. We refer the interested reader to Appendix~A for the proof of the corresponding result, cf.\ Theorem~\ref{Thm 1D case}. The primitive of the map $g_\la$ from \eqref{Gl def mMKc} is denoted by
$$
  G_\la(z) = \frac{\la}{q}|z|^q + \frac{1}{p}|z|^p \qquad\text{for }z\in\R.
$$
For $\la>0$ the "time map" $T_\la:\R_{>0}\to\R_{>0}$ associates to $\al>0$ the first time $t>0$ such that a solution $u$ of
\begin{equation}\label{eq:model}
-u''=g_\la(u),\quad u(0)=0,\quad \|u\|_\infty =\al
\end{equation}
satisfies $|u(t)|=\|u\|_\infty =\al$. Observe that such a solution is uniquely defined up to sign. The time map is given explicitly by
\begin{equation} \label{Gl def timemap}
T_\la(\al) = \int_0^\al \frac{1}{\sqrt{2(G_\la(\al)-G_\la(z)})}\,dz.
\end{equation}
Since a solution $u$ of \eqref{eq:model} with $u(T)=\|u\|_\infty$ satisfies $u(T+t)=u(T-t)$, it follows that a solution $\al$ of $T_\la(\al)=\frac{1}{2j+2}$ yields a $j$-nodal solution of \eqref{Gl 1D ABC-problem} with
$|u(\frac{1}{2j+2})|=\|u\|_\infty =\al$. This fact will be proved in Proposition~\ref{Prop App A 3}. In the analysis of the nonautonomous boundary value problem \eqref{Gl eq ODE}, however, such an exact solution theory is out of reach since the a-priori information about the localization of maximizers of an arbitrary solution of \eqref{Gl eq ODE} with $j+1$ nodal intervals is not available. Nevertheless we find a weaker result stating that every such solution of \eqref{Gl eq ODE} satisfies
\begin{align} \label{Gl Definition untj ovtj}
\frac{\sqrt{m}\cdot s_j}{2} \le T_\la(\|v\|_\infty)
 \le \frac{\sqrt{M}\cdot t_j}{2}
 \qquad\text{where }s_j = \frac{1}{a^j(j+1)}, \quad t_j := 1-js_j.
\end{align}
Here, the number $a>1$ is given by
\begin{align} \label{Gl a}
a := \max\Big\{ \Big(\frac{M}{m}\Big)^{1/q}(C_1C_2)^{(2-q)/2q},
      \Big(\frac{M}{m}\Big)^{(p-1)/p}(C_1C_2)^{(p-2)/2p}\Big\}
\end{align}
and the positive numbers $C_1,C_2\ge 1$ will be provided in 
Proposition~\ref{Prop Vorbereitung AprioriII 2}. In the proof of the a-priori estimates for $\la\ge 0$ we will need several properties of the time map $T_\la$ which we summarize in the following two Propositions~\ref{Prop Vorbereitung AprioriII 1} and~\ref{Prop Vorbereitung AprioriII 2} the proofs of which we defer to Appendix~B. These prelimary results tell us that for positive $\lambda$ the time map $T_\lambda$ may be qualitatively depicted as in Figure~\ref{pic Tlambda lambdageq0}.

\medskip

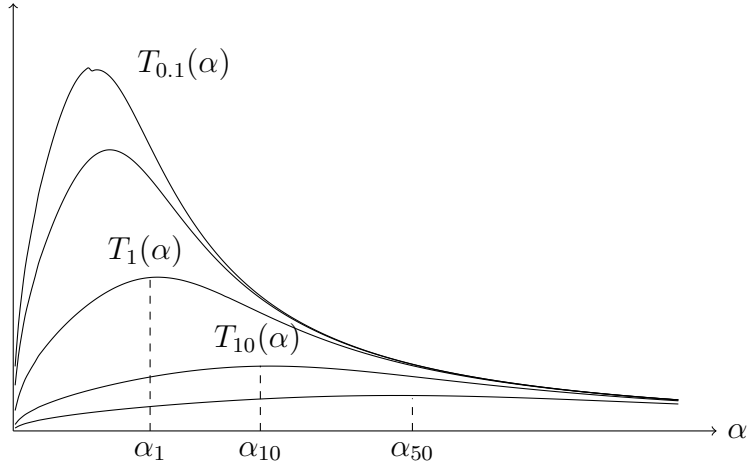
\begin{figure}[!htp]
 \centering
    \begin{tikzpicture}[domain=0.01:3.5, xscale=2.5, yscale=2.7, samples=200]
      \draw[->] (0,0) -- (3.7,0) node[right]{$\al$};
      \draw[->] (0,0) -- (0,2.1); 
      \draw[smooth] plot (\x,{exp(ln(\x)*1/2)/exp(ln(50+(\x)^4)*1/2)}); %% lambda=50
      \draw[smooth] plot (\x,{exp(ln(\x)*1/2)/exp(ln(10+(\x)^4)*1/2)}); %% lambda=10
      \draw[smooth] plot (\x,{exp(ln(\x)*1/2)/exp(ln(1+(\x)^4)*1/2)});  %% lambda=1
      \draw[smooth] plot (\x,{exp(ln(\x)*1/2)/exp(ln(0.2+(\x)^4)*1/2)}); %% lambda=0.2
      \draw[smooth] plot (\x,{exp(ln(\x)*1/2)/exp(ln(0.1+(\x)^4)*1/2)}); %% lambda=0.1
      \draw[dashed] (0.72,0) node[below] {$\al_1$} -- (0.72,0.75)  node[above]{$T_1(\al)$\;};
      \draw[dashed] (1.3,0) node[below] {$\al_{10}$} -- (1.3,0.32) node[above]{$T_{10}(\al)$\,};
      \draw[dashed] (2.1,0) node[below] {$\al_{50}$} -- (2.1,0.16);
      \node at (0.9,1.8) {$T_{0.1}(\al)$};
      \label{pic a}
    \end{tikzpicture} 
 \caption{Qualitative plots of $T_\la$ for $q=1.5,p=4$ and $\la\in\{0.1,0.2,1,10,50\}$.}
  \label{pic Tlambda lambdageq0} 
\end{figure} 
 
\medskip

\begin{prop}\label{Prop Vorbereitung AprioriII 1}
%\begin{itemize}
%  \item[(i)]
 For all $\la>0$ there is a uniquely determined $\al_\la>0$ such that $T_\la$ is strictly increasing on $(0,\al_\la]$ and strictly decreasing on $[\al_\la,\infty)$. Moreover, we have
 $$
 \lim_{\al\to 0^+} T_\la(\al) = \lim_{\al\to \infty} T_\la(\al) = 0
 $$
 and there is a positive number $C_3\ge 1$ such that
 $T_\la(\al_\la)\le C_3\la^{(2-p)/2(p-q)}$ for all $\la>0$.
%       \item[(ii)] For all $\al>0,\la\ge 0$ the following estimates hold true:
%       \begin{align*}
%         T_\la(\al)
%         &\le \Big( \frac{pq\al^{2-q}}{2\la p + 2q\al^{p-q}} \Big)^{1/2} \int_0^1
%         \frac{1}{\sqrt{1-s^q}}\ds, \\
%         T_\la(\al)
%         &\ge \Big( \frac{pq\al^{2-q}}{2\la p + 2q\al^{p-q}} \Big)^{1/2} \int_0^1
%         \frac{1}{\sqrt{1-s^p}}\ds.
%       \end{align*}
%     \end{itemize}
\end{prop}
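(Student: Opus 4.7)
My plan proceeds in three steps. First, I exploit a scaling symmetry. Substituting $z = \al s$ in \eqref{Gl def timemap} and then rescaling $\al = \be\la^{1/(p-q)}$ yields the identity
\[
T_\la(\al) = \la^{(2-p)/(2(p-q))}\,T_1\bigl(\al\la^{-1/(p-q)}\bigr).
\]
Every assertion of the proposition thereby reduces to the corresponding statement about $T_1$, and the claimed bound $T_\la(\al_\la) \le C_3\la^{(2-p)/(2(p-q))}$ follows at once by taking $C_3 := \max\{1, \max_{\be>0} T_1(\be)\}$ once the existence of that maximum is established.

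Second, I prove the two boundary limits for $T_1$. After the substitution $z=\al s$ the time map takes the form
\[
T_1(\al) = \int_0^1\Bigl[\tfrac{2}{q}\al^{q-2}(1-s^q) + \tfrac{2}{p}\al^{p-2}(1-s^p)\Bigr]^{-1/2}\ds,
\]
and factoring either $\al^q$ or $\al^p$ out of the bracket exhibits $T_1(\al)$ as a vanishing prefactor $\al^{(2-q)/2}$ (resp.\ $\al^{(2-p)/2}$) multiplied by an integral whose integrand is pointwise dominated by the $s$-integrable functions $\sqrt{q/(2(1-s^q))}$ and $\sqrt{p/(2(1-s^p))}$, respectively. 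Dominated convergence then forces $T_1(\al) \to 0$ both as $\al \to 0^+$ and as $\al \to \infty$.

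Third, and this is where the main obstacle lies, I establish the existence and uniqueness of a positive maximum of $T_1$. Differentiating under the integral and simplifying gives
\[
\al T_1'(\al) = \int_0^\al\frac{\Theta(\al) - \Theta(z)}{\bigl[2(G_1(\al) - G_1(z))\bigr]^{3/2}}\dz,\qquad \Theta(z) := \tfrac{2-q}{q}z^q - \tfrac{p-2}{p}z^p,
\]
where $\Theta$ is strictly increasing on $[0, z^*]$ and strictly decreasing on $[z^*, \infty)$ with $z^* := ((2-q)/(p-2))^{1/(p-q)}$. For $\al \le z^*$ the integrand is pointwise positive, so $T_1'(\al) > 0$; a direct asymptotic expansion of the integral shows $T_1'(\al) < 0$ for all sufficiently large $\al$. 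Together with step two this yields an interior global maximum and confines every critical point of $T_1$ to the interval $(z^*, \infty)$. To rule out a second critical point I would show that $T_1''(\al_c) < 0$ at every critical point $\al_c$, which forces the critical set to consist of isolated strict local maxima and hence, together with the boundary behaviour, to be a singleton; strict monotonicity on each side of $\al_\la$ follows immediately. Concretely, I differentiate the displayed formula once more in $\al$, invoke the critical-point relation $T_1'(\al_c)=0$ to replace $\Theta(\al_c)$ by the weighted mean of $\Theta(z)$ over $z \in (0, \al_c)$ with weight $[2(G_1(\al_c)-G_1(z))]^{-3/2}$, and combine $\Theta'(\al_c) < 0$ with the strict concavity of $\Theta$ on $(z^*,\infty)$ to force the resulting integrand to have a definite negative sign. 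This computation is the technical heart of the argument and is the natural place for the appeal to Appendix~B.
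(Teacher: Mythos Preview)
Your scaling identity $T_\la(\al)=\la^{(2-p)/(2(p-q))}T_1(\al\la^{-1/(p-q)})$ is correct and is a genuine simplification that the paper does not exploit: the paper carries $\la$ through all computations and obtains the bound $T_\la(\al_\la)\le C_3\la^{(2-p)/(2(p-q))}$ by first proving two-sided pointwise bounds on $T_\la$ (its part (i)) and then optimising over $\al$. Your route gives the bound for free once $\max T_1$ exists, and likewise collapses the proof of the boundary limits and of the monotonicity structure to the single case $\la=1$. Your formula $\al T_1'(\al)=\int_0^\al[\Theta(\al)-\Theta(z)]\,[2(G_1(\al)-G_1(z))]^{-3/2}\dz$ with $\Theta(z)=2G_1(z)-zg_1(z)$ is also correct and is a cleaner repackaging of the paper's derivative formula; both of you then argue uniqueness by proving $T_1''(\al_c)<0$ at every critical point.

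The gap is in the justification of $T_1''(\al_c)<0$. You propose to use $\Theta'(\al_c)<0$ together with the strict concavity of $\Theta$ on $(z^*,\infty)$ to force the integrand in the second derivative to have a definite sign. But the integration runs over $z\in(0,\al_c)$, and on $(0,z^*)$ the function $\Theta$ is \emph{convex} (indeed $\Theta''(z)\to+\infty$ as $z\to 0^+$), so concavity on $(z^*,\infty)$ alone cannot control the full integrand pointwise. You have not indicated how the convex region is handled, and the weighted-mean substitution you describe does not by itself neutralise the contribution from small $z$. The paper's argument avoids this issue by working in the $s=z/\al$ parametrisation and using the algebraic consequence of $T_\la'(\al_c)=0$ that $q(p-2)\al_c^{p-q}\le \la p(2-q)$ (equivalently $\al_c^{p-q}\le (p/q)z^{*\,p-q}$) to bound one term in the numerator of the second-derivative integrand; after this substitution the numerator becomes a function of $s$ alone which one checks directly is negative on $[0,1)$. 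If you want to stay with your $\Theta$-representation, you will need an analogous pointwise bound rather than a concavity argument.
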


\begin{bem}\label{Bem T0}
Explicit calculations give for $\al>0$:
$$
T_0(\al)
 = \sqrt{\frac{p}{2}} \int_0^1 \frac{1}{\sqrt{1-s^p}}\ds \cdot \al^{(2-p)/2}.
$$
In particular, the time map $T_0$ does not have the properties described in Proposition~\ref{Prop Vorbereitung AprioriII 1}. This corresponds to the fact that the bifurcation result from Theorem~\ref{Thm 2}~(iv)(a) is not true for $\la=0$ while it is true for all $\la>0$.
\end{bem}

  %The next Proposition provides supplementary estimates which we will need in the proof of Lemma \ref{Lem
  %Apriori estimates II}. Since its proof is lengthy and purely technical we defer it to Appendix B.

\begin{prop}\label{Prop Vorbereitung AprioriII 2}
There are positive numbers $C_1,C_2\ge 1$ only depending on $p,q$ such that for all $\la\ge 0$ the following estimates hold:
\begin{align*}
\frac{G_\la(\be_1)}{G_\la(\be_2)}
 &\le C_1 \Big(\frac{T_\la(\be_1)}{T_\la(\be_2)}\Big)^{2q/(2-q)}
  && \text{for }0<\be_1,\be_2\le \al_\la, \\
      \frac{G_\la(\be_1)}{G_\la(\be_2)}
 &\le C_2 \Big(\frac{T_\la(\be_2)}{T_\la(\be_1)}\Big)^{2p/(p-2)}
  && \text{for }\be_1,\be_2\ge \al_\la, \\
\frac{G_\la(\be_1)}{G_\la(\be_2)}
 &\le C_1C_2 \min\Big\{
       \Big(\frac{T_\la(\be_1)}{T_\la(\al_\la)}\Big)^{2q/(2-q)},
       \Big(\frac{T_\la(\be_2)}{T_\la(\al_\la)}\Big)^{2p/(p-2)}\Big\}
  &&\text{for }\be_1\le \al_\la\le  \be_2.
\end{align*}
\end{prop}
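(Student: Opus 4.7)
The plan is to first reduce the three inequalities to the single case $\la = 1$ by a scaling argument, then compare $T_\la(\al)$ with the simpler quantity $\al/\sqrt{G_\la(\al)}$ up to a multiplicative constant depending only on $p,q$, and finally derive the estimates from elementary monotonicity properties of $\be\mapsto G_1(\be)/\be^q$ and $\be\mapsto G_1(\be)/\be^p$. For the scaling, the substitution $z=\la^{1/(p-q)}\tilde z$ in the integral defining $T_\la$ yields, for every $\la>0$,
\begin{align*}
G_\la(\al) &= \la^{p/(p-q)}\,G_1\bigl(\la^{-1/(p-q)}\al\bigr),\\
T_\la(\al) &= \la^{(2-p)/(2(p-q))}\,T_1\bigl(\la^{-1/(p-q)}\al\bigr),
\end{align*}
so every ratio $G_\la(\be_1)/G_\la(\be_2)$ and $T_\la(\be_1)/T_\la(\be_2)$ is invariant under the rescaling $\be_i\mapsto\la^{-1/(p-q)}\be_i$; in particular $\al_\la=\la^{1/(p-q)}\al_1$. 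This reduces all three inequalities to the case $\la=1$ with constants depending only on $p,q$; the case $\la=0$ is handled separately since the first and third inequalities are vacuous (no $\al_0$), while in the second $G_0(\al)=\al^p/p$ and $T_0(\al)=c_p\,\al^{(2-p)/2}$ by Remark~\ref{Bem T0}, so both sides agree up to a $p$-dependent constant.

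Substituting $z=\al s$ in $T_1(\al)=\int_0^\al(2(G_1(\al)-G_1(z)))^{-1/2}\,dz$ and factoring out $\sqrt{2\,G_1(\al)}$ yields
\begin{equation*}
T_1(\al) = \frac{\al}{\sqrt{2\,G_1(\al)}}\,I(u(\al)),\qquad
I(u) := \int_0^1\frac{ds}{\sqrt{1-us^q-(1-u)s^p}},\qquad
u(\al):=\frac{p}{p+q\,\al^{p-q}}\in(0,1).
\end{equation*}
Since $s^p\le s^q$ on $[0,1]$, the convex combination $us^q+(1-u)s^p$ lies between $s^p$ and $s^q$, so
$\int_0^1 ds/\sqrt{1-s^p}\le I(u)\le \int_0^1 ds/\sqrt{1-s^q}$; both bounds are finite positive constants depending only on $p,q$, and therefore $T_1(\al)$ is comparable to $\al/\sqrt{G_1(\al)}$ uniformly in $\al>0$.

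Using this comparability, the first inequality (for $\be_1,\be_2\le\al_1$) follows, up to adjusting $C_1$, from an upper bound on $(G_1(\be_1)/\be_1^q)/(G_1(\be_2)/\be_2^q)$. But $G_1(\be)/\be^q=1/q+\be^{p-q}/p$ is strictly increasing on $(0,\al_1]$ with values in $[1/q,\,1/q+\al_1^{p-q}/p]$, so this ratio is bounded by $1+q\al_1^{p-q}/p$. Symmetrically, the second inequality reduces to bounding the ratio of $G_1(\be)/\be^p=1/(q\be^{p-q})+1/p$, which is strictly decreasing on $[\al_1,\infty)$ with values in $[1/p,\,1/p+1/(q\al_1^{p-q})]$. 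For the third, splitting at $\al_1$ and multiplying the first two inequalities applied to the pairs $(\be_1,\al_1)$ and $(\al_1,\be_2)$ gives
\begin{equation*}
\frac{G_1(\be_1)}{G_1(\be_2)} \le C_1C_2\,\Big(\frac{T_1(\be_1)}{T_1(\al_1)}\Big)^{2q/(2-q)}\Big(\frac{T_1(\be_2)}{T_1(\al_1)}\Big)^{2p/(p-2)},
\end{equation*}
and since both factors on the right are $\le 1$ (as $T_1$ attains its global maximum at $\al_1$), their product is bounded above by their minimum. The main technical hurdle is the uniform two-sided control of the shape integral $I(u)$ in the comparability step; once this is in hand, the remaining derivation is algebraic bookkeeping.
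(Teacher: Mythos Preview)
Your proof is correct and follows essentially the same strategy as the paper: both hinge on the two-sided comparability $T_\la(\al)\asymp \al/\sqrt{G_\la(\al)}$ with constants $m_p,m_q$ depending only on $p,q$ (this is exactly the content of Proposition~\ref{Prop properties time map}~(i), which your ``shape integral'' $I(u)$ makes explicit), and both then convert this into the three inequalities by controlling how $G$ scales in $\be$ on either side of $\al_\la$.

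The one genuine difference is organizational. The paper works directly with general $\la>0$ and invokes the bounds $c_3\la^{1/(p-q)}\le\al_\la\le C_3\la^{1/(p-q)}$ from Proposition~\ref{Prop properties time map}~(iii) to control the extra terms; you instead rescale to $\la=1$ first, which eliminates the $\la$-dependence and replaces the use of $c_3,C_3$ by the single fixed number $\al_1$. This is a clean simplification: it makes transparent that all constants depend only on $p,q$, and it replaces the paper's explicit manipulation of $\be_i^{p-q}/\la$ by the observation that $G_1(\be)/\be^q$ and $G_1(\be)/\be^p$ have bounded range on $(0,\al_1]$ and $[\al_1,\infty)$ respectively. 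The paper's route gives slightly more explicit constants (in terms of $m_p,m_q,c_3,C_3$), while yours makes the scaling structure and the algebraic reduction more visible; mathematically the two are equivalent.
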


Next let us use these technical propositions in order to prove a priori estimates for $j$-nodal solutions of~\eqref{Gl eq ODE}.

\begin{lem} \label{Lem Apriori estimates II}
Assume that (B1), (B2) hold, and fix $j\in\N_0$. Then the following estimates hold for all $(\la,v)\in\Si_j^\pm$ with $\la\ge 0$:
\begin{align*}
\la \le \Big( \frac{2C_3(j+1)}{m}\Big)^{2(p-q)/(p-2)},\quad
\frac{\sqrt{m}\cdot s_j}{2} \le T_\la(\|v\|_\infty)
 \le \frac{\sqrt{M}\cdot t_j}{2}, \quad
\|v'\|_\infty \le \sqrt{2MG_\la(\|v\|_\infty)}
%\|v''\|_\infty
%\le M g_\la(\|v\|_\infty)
\end{align*}
where $s_j$, $t_j$ are given by \eqref{Gl Definition untj ovtj}.
\end{lem}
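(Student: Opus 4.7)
The plan is to run the energy/time-map argument on each nodal interval of $v$, compensating for the non-autonomous character of $h_\la$ through the two-sided sandwich (B2). Fix $(\la,v)\in\Si_j^\pm$ with $\la\ge 0$, write its nodal intervals as $I_k=(a_k,b_k)$ with interior maximizer $\xi_k$ guaranteed by Proposition~\ref{Prop 2}, and set $\al_k:=\|v\|_{L^\infty(I_k)}$. Multiplying the ODE by $v'$ on $(\xi_k,r)\subset I_k$ and substituting $z=v(s)$ in the resulting integral converts $m\,zg_\la(z)\le zh_\la(r,z,\xi)\le M\,zg_\la(z)$ from (B2) into the pointwise energy bracket
\[
 2m\bigl(G_\la(\al_k)-G_\la(v(r))\bigr)\;\le\;v'(r)^2\;\le\;2M\bigl(G_\la(\al_k)-G_\la(v(r))\bigr)\quad\text{on }I_k.
\]
Specialising to $v(r)=0$ at the endpoints of $I_k$ yields $\|v'\|_{L^\infty(I_k)}\le\sqrt{2MG_\la(\al_k)}\le\sqrt{2MG_\la(\|v\|_\infty)}$, which is the third asserted inequality. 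Inverting the substitution $z=v(r)$ on each of the two half-intervals $(a_k,\xi_k)$ and $(\xi_k,b_k)$ and integrating in $z$ from $0$ to $\al_k$ produces the pointwise time-map sandwich
\[
 \frac{\sqrt{m}\,|I_k|}{2}\;\le\;T_\la(\al_k)\;\le\;\frac{\sqrt{M}\,|I_k|}{2}\qquad(k=0,\ldots,j).
\]

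The core step is controlling how $T_\la(\al_k)$ varies from one nodal interval to the next. At a common node $b_k=a_{k+1}$, continuity of $v'$ together with the bracket above (evaluated at $v=0$) yields $m/M\le G_\la(\al_k)/G_\la(\al_{k+1})\le M/m$. I will translate this $G_\la$-ratio bound into a $T_\la$-ratio bound by distinguishing the three cases (i) $\al_k,\al_{k+1}\le\al_\la$, (ii) $\al_k,\al_{k+1}\ge\al_\la$, and (iii) the straddling case $\al_k\le\al_\la\le\al_{k+1}$ (or the reverse). In cases (i) and (ii), Proposition~\ref{Prop Vorbereitung AprioriII 2}(i)--(ii) are applied in both orderings of the two amplitudes to force $T_\la(\al_k)/T_\la(\al_{k+1})$ and its reciprocal to be bounded by $(C_1M/m)^{(2-q)/(2q)}$ respectively $(C_2M/m)^{(p-2)/(2p)}$; in case (iii), Proposition~\ref{Prop Vorbereitung AprioriII 2}(iii) combined with the $G_\la$-ratio bound confines both $T_\la(\al_k)$ and $T_\la(\al_{k+1})$ to the interval $[T_\la(\al_\la)/a,\,T_\la(\al_\la)]$, using the monotonicity of $T_\la$ on either side of $\al_\la$. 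The specific exponents in the definition~\eqref{Gl a} of $a$ are chosen precisely to majorise the constants arising in all three cases, giving the master adjacency estimate
\[
 a^{-1}\;\le\;\frac{T_\la(\al_{k+1})}{T_\la(\al_k)}\;\le\;a\qquad\text{and hence}\qquad T_\la(\al_k)\;\ge\;a^{-j}\,T_\la(\al_{k'})\quad\text{for all }k,k'\in\{0,\dots,j\}.
\]
This calibration of $a$ against the three case-by-case constants is the principal technical obstacle; once it is in place the rest is bookkeeping.

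With the pointwise sandwich and the adjacency estimate in hand, the two remaining bounds follow by manipulating the length identity $\sum_{k=0}^j|I_k|=1$. Let $k^*$ be an index with $\al_{k^*}=\|v\|_\infty$ and $k_{\max}$ an index with $|I_{k_{\max}}|\ge 1/(j+1)$. For the lower time-map bound, the adjacency estimate applied to the pair $(k^*,k_{\max})$ gives
\[
 T_\la(\|v\|_\infty)\;\ge\;a^{-j}\,T_\la(\al_{k_{\max}})\;\ge\;a^{-j}\cdot\tfrac{\sqrt{m}}{2(j+1)}\;=\;\tfrac{\sqrt{m}\,s_j}{2}.
\]
For the upper one, the inequality $|I_k|\ge 2T_\la(\al_k)/\sqrt{M}\ge 2T_\la(\|v\|_\infty)/(\sqrt{M}\,a^j)$ (valid for every $k$) summed over $k\ne k^*$ gives $|I_{k^*}|\le 1-2jT_\la(\|v\|_\infty)/(\sqrt{M}\,a^j)$; combined with $T_\la(\|v\|_\infty)\le\sqrt{M}\,|I_{k^*}|/2$ and rearranged this yields $T_\la(\|v\|_\infty)\le\sqrt{M}\,a^j/(2(a^j+j))\le\sqrt{M}\,t_j/2$, the last step being the elementary fact that $a^j/(a^j+j)\le 1-j/(a^j(j+1))=t_j$. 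Finally, the bound on $\la$ is obtained by combining the direct estimate $T_\la(\al_{k_{\max}})\ge\sqrt{m}/(2(j+1))$ (which needs no adjacency factor) with the global bound $T_\la(\al_{k_{\max}})\le T_\la(\al_\la)\le C_3\la^{(2-p)/(2(p-q))}$ from Proposition~\ref{Prop Vorbereitung AprioriII 1} and solving for $\la$.
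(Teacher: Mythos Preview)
Your proof is correct and follows the same architecture as the paper's: derive the energy bracket $2m(G_\la(\al_k)-G_\la(v))\le (v')^2\le 2M(G_\la(\al_k)-G_\la(v))$ on each nodal interval, extract from it both the derivative bound and the time-map sandwich $\sqrt{m}\,|I_k|/2\le T_\la(\al_k)\le\sqrt{M}\,|I_k|/2$, use continuity of $v'$ at interior nodes to get the two-sided $G_\la$-ratio bound, and then invoke Proposition~\ref{Prop Vorbereitung AprioriII 2}. The only substantive difference is in how the last step is organised. The paper partitions the adjacency indices into $J_1$ (straddling pairs $\be_i\le\al_\la\le\be_{i+1}$) and $J_2$ (same-side pairs), runs three cases ($J_1=\emptyset$, $J_2=\emptyset$, mixed), and proves the stronger intermediate statement $s_j\le l_i\le t_j$ for \emph{every} $i$ before specialising to the interval carrying $\|v\|_\infty$. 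You instead observe that in each of the three cases Proposition~\ref{Prop Vorbereitung AprioriII 2} yields the uniform ratio bound $a^{-1}\le T_\la(\al_{k+1})/T_\la(\al_k)\le a$ (in the straddling case because both values land in $[T_\la(\al_\la)/a,\,T_\la(\al_\la)]$), and then chain this directly to estimate $T_\la(\|v\|_\infty)$. Your route is a little more streamlined and dispenses with the case split; the paper's route gives the extra information that \emph{all} nodal lengths satisfy $s_j\le l_i\le t_j$, which is not part of the Lemma's statement but is alluded to in Remark~\ref{Bem Remark 1}(b).
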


\begin{proof}
Let $(\la,v)\in\Si_j$ be a solution of \eqref{Gl eq ODE} with closed nodal intervals
$I_0,\ldots,I_{j}$ of length ${l_0,\ldots,l_{j}>0}$ satisfying $l_0+\ldots+l_{j}=1$; see Proposition~\ref{Prop 2}. We set $\al:= \|v\|_{L^\infty([0,1])}$ and $\be_i:=\|v\|_{L^\infty(I_i)}$ for $i=0,\ldots,j$. Proposition \ref{Prop 2} implies that every nodal interval $I_i=:[\eta_i,\eta_i+l_i]$ contains a uniquely determined point $\xi_i\in \mathring{I_i}$ having the property
\begin{equation} \label{Gl Apriori 1}
|v(\xi_i)|=\be_i,\qquad
%|v|'> 0\quad\text{on }[\eta_i,\xi_i),\qquad
v'(\xi_i)=0,\qquad
v'\ne 0\quad\text{on } [\eta_i,\eta_i+l_i]\sm\{\xi_i\},
\end{equation}
and $\max_{I_i} |v'|$ is attained on $\pa I_i$.

{\it Step 1: Estimate for $\be_i$ in terms of $l_i$.}\; Our first aim is to prove for $i=0,\ldots,j$ the estimates
\begin{equation}\label{Gl Apriori 2}
\frac{\sqrt{m}l_i}{2} \le T_\la(\be_i) \le \frac{\sqrt{M}l_i}{2}
\quad\text{and}\quad
\sqrt{2mG_\la(\be_i)} \le |v'|_{\pa I_i}| \le  \sqrt{2M G_\la(\be_i)}.
\end{equation}
%\begin{align}\label{Gl Apriori 2}
%\begin{aligned}
%\frac{\sqrt{m}l_i}{2} &\le T_\la(\be_i) \le \frac{\sqrt{M}l_i}{2}, \\
%\sqrt{2mG_\la(\be_i)} &\le |v'|_{\pa I_i}| \le  \sqrt{2M G_\la(\be_i)}
%\end{aligned}
%\end{align}
We only prove the assertion in case $v|_{I_i}>0$ since the reasoning for the case $v|_{I_i}<0$ is similar. From \eqref{Gl Apriori 1} and (B2) we get
\begin{align*}
-v''v' &= h_\la(r,v,|v'|)v' \ge m g_\la(v)v' \quad\text{on }[\eta_i,\xi_i], \\
\noalign{and}
-v''v' &= h_\la(r,v,|v'|)v' \le m g_\la(v)v' \quad\text{on }[\xi_i,\eta_i+l_i].
\end{align*}
Integrating these inequalities from $x$ to $\xi_i$ for all $x\in I_i$ gives
\begin{align} \label{Gl Apriori 6}
(v')^2 &\ge 2m  (G_\la(\be_i)-G_\la(v)) \quad\text{on }I_i.
\end{align}
From this inequality and \eqref{Gl Apriori 1} we infer
\begin{align}\label{Gl Apriori 3}
2T_\la(\be_i)
 &= \int_0^{\be_i} \frac{1}{\sqrt{2(G_\la(\be_i)-G_\la(z))}} \dz
     + \int_0^{\be_i} \frac{1}{\sqrt{2(G_\la(\be_i)-G_\la(z))}} \dz \notag\\
 &= \int_{v(\eta_i)}^{v(\xi_i)} \frac{1}{\sqrt{2(G_\la(\be_i)-G_\la(z))}} \dz
     + \int_{v(\eta_i+l_i)}^{v(\xi_i)} \frac{1}{\sqrt{2(G_\la(\be_i)-G_\la(z))}} \dz \notag\\
 &= \int_{\eta_i}^{\xi_i} \frac{v'(t)}{\sqrt{2(G_\la(\be_i)-G_\la(v(t)))}} \dt
     + \int_{\xi_i}^{\eta_i+l_i} \frac{-v'(t)}{\sqrt{2(G_\la(\be_i)-G_\la(v(t)))}} \dt \notag\\
 &\ge \int_{\eta_i}^{\xi_i} \sqrt{m} \dt + \int_{\xi_i}^{\eta_i+l_i} \sqrt{m} \dt
      \notag\\
 &= \sqrt{m} l_i
\end{align}
and similarly one proves
\begin{align}\label{Gl Apriori 7}
(v')^2 &\le 2M (G_\la(\be_i)-G_\la(v)) \quad\text{on }I_i,\qquad
2T_\la(\be_i) \le \sqrt{M} l_i.
\end{align}
The inequalities \eqref{Gl Apriori 6}--\eqref{Gl Apriori 7} and
 $G_\lambda(v)|_{\pa I_i} = v|_{\pa I_i}=0$ imply \eqref{Gl Apriori 2}.

{\it Step 2: Estimate for $\|v'\|_\infty$.}\;  Proposition \ref{Prop 2} implies
$\|v'\|_{L^\infty(I_i)}=\max_{\pa I_i} |v'|$ for all $i\in\{0,\ldots,j\}$. From \eqref{Gl Apriori 2} and the monotonicity of $g_\la,G_\la$ we deduce
\begin{align*}
\|v'\|_{L^\infty([0,1])}
 &= \max_{i=0,\ldots,j} \|v'\|_{L^\infty(I_i)}
  \le \max_{i=0,\ldots,j} \sqrt{2MG_\la(\be_i)}
  = \sqrt{2MG_\la(\al)}.
%       \\
%       \|v'\|_{L^\infty([0,1])}
%       &= \max_{i=1,\ldots,j+1} \|v'\|_{L^\infty(I_i)}
%       \ge \max_{i=1,\ldots,j+1} \sqrt{2mG_\la(\be_i)}
%       = \sqrt{2mG_\la(\al)}.
\end{align*}
%     as well as
%     $$
%       \|v''\|_{L^\infty([0,1])}
%       = \|h_\la(\cdot,v,|v'|)\|_{L^\infty([0,1])}
%       \le M \|g_\la(v)\|_{L^\infty([0,1])}
%       = M g_\la(\al).
%     $$

{\it Step 3: Estimate for $\la$.}\;
From $l_0+\ldots+l_{j}=1$, \eqref{Gl Apriori 2} and the inequality
$T_\la(\al_\la)\le C_3\la^{(2-p)/2(p-q)}$ from Proposition 
\ref{Prop Vorbereitung AprioriII 1} we obtain
\begin{equation}\label{GL_est-T_lambda}
\frac{\sqrt{m}}{2(j+1)}
 \le \frac{\sqrt{m}}{2} \max_{k=0,\ldots,j} l_k
 \le T_\la(\al_\la)
 \le C_3\la^{(2-p)/2(p-q)}
\end{equation}
and hence the estimate for $\la$.

{\it Step 4: Estimate for $l_0,\ldots,l_{j}$ and $\|v\|_\infty$.}\;
From \eqref{Gl Apriori 2} we obtain the estimate for $\|v\|_\infty$ once we have shown the inequality
\begin{equation}\label{Gl Apriori 8}
s_j \le l_i \le t_j \qquad \text{for all } i\in\{0,\ldots,j\}.
\end{equation}
Since this estimate is trivial in case $j=0$ we only consider the case $j\ge 1$. Due to $l_0+\ldots+l_{j}=1$ and $t_j=1-js_j$, see \eqref{Gl Definition untj ovtj}, it suffices to prove the lower estimate. Since $\pa I_i\cap \pa I_{i+1}$ is non-empty for all $i\in\{0,\ldots,j-1\}$ we infer from \eqref{Gl Apriori 2} that the intervals    $\left[\sqrt{2mG_\la(\be_i)},\sqrt{2MG_\la(\be_i)}\right]$ and    $\left[\sqrt{2mG_\la(\be_{i+1})},\sqrt{2MG_\la(\be_{i+1})}\right]$ overlap. In particular this entails
\begin{align} \label{Gl Apriori 9}
\frac{m}{M}\le \frac{G_\la(\be_i)}{G_\la(\be_{i+1})}\quad\text{or}\quad
\frac{m}{M}\le \frac{G_\la(\be_{i+1})}{G_\la(\be_i)}.
\end{align}
In view of the case distinction from Proposition~\ref{Prop Vorbereitung AprioriII 2} we define the covering $\{J_1,J_2\}$ of the index set $\{0,\ldots,j-1\}$ as follows:
\begin{align*}
J_1 &:= \{i\in\{0,\ldots,j-1\}: \be_i\le \al_\la\le \be_{i+1}
        \;\text{ or }\;\be_{i+1}\le \al_\la\le \be_i \}. \\
J_2 &:= \{ i\in\{0,\ldots,j-1\}: \be_i,\be_{i+1}\le \al_\la
        \;\text{ or }\; \al_\la\le \be_i,\be_{i+1} \}.
\end{align*}

{\it 1st case: $J_2=\emptyset$.}\; From Proposition~\ref{Prop Vorbereitung AprioriII 2} we get for all $i\in\{0,\ldots,j-1\}$:
\begin{align*}
\frac{m}{M}
 \le C_1C_2 \Big( \frac{T_\la(\be_i)}{T_\la(\al_\la)}\Big)^{2q/(2-q)}
      \quad\text{if } \be_i\le \al_\la, \\
\frac{m}{M}
 \le C_1C_2 \Big( \frac{T_\la(\be_i)}{T_\la(\al_\la)}\Big)^{2p/(p-2)}
      \quad\text{if } \be_i\ge \al_\la.
\end{align*}
From \eqref{Gl Apriori 2} and \eqref{GL_est-T_lambda} we deduce:
\begin{align*}
\frac{\sqrt{M}l_i}{2}
 &\ge T_\la(\be_i)
  \ge  T_\la(\al_\la) \cdot \Big( \frac{m}{MC_1C_2}\Big)^{(2-q)/2q}
  \ge \frac{\sqrt{m}}{2(j+1)}\Big( \frac{m}{MC_1C_2}\Big)^{(2-q)/2q}
      \quad\text{if }  \be_i\le \al_\la,\\
\frac{\sqrt{M}l_i}{2}
 &\ge T_\la(\be_i)
  \ge  T_\la(\al_\la) \cdot \Big( \frac{m}{MC_1C_2}\Big)^{(p-2)/2p}
  \ge \frac{\sqrt{m}}{2(j+1)}\Big( \frac{m}{MC_1C_2}\Big)^{(p-2)/2p}
      \quad\text{if } \be_i\ge \al_\la,
\end{align*}
which, by definition of $a$ and $s_j$ in \eqref{Gl Definition untj ovtj}, in particular implies
$$
l_i
 \ge \frac{1}{j+1}\cdot \min\Big\{ \Big(\frac{m}{M}\Big)^{1/q}
     (C_1C_2)^{(q-2)/2q},
     \Big(\frac{m}{M}\Big)^{(p-1)/p} (C_1C_2)^{(2-p)/2p} \Big\}
 = \frac{1}{a(j+1)}
 \ge s_j
$$
so that \eqref{Gl Apriori 8} is proved in this special case.

{\it 2nd case: $J_1=\emptyset$.}\;
Using the estimates from \eqref{Gl Apriori 9}, 
Proposition~\ref{Prop Vorbereitung AprioriII 2} and \eqref{Gl Apriori 2} we obtain for all $i\in\{0,\ldots,j-1\}$:
\begin{align*}
\frac{m}{M}
 &\le C_1 \Big( \frac{\sqrt{M}l_i}{\sqrt{m}l_{i+1}}\Big)^{2q/(2-q)}, \quad
\frac{m}{M}
  \le C_1\Big( \frac{\sqrt{M} l_{i+1}}{\sqrt{m} l_i}\Big)^{2q/(2-q)}
       \qquad \text{if }\be_i,\be_{i+1}\le \al_\la, \\
\frac{m}{M}
 &\le C_2 \Big( \frac{\sqrt{M} l_{i+1}}{\sqrt{m}l_i}\Big)^{2p/(p-2)}, \quad
\frac{m}{M}
  \le C_2\Big(  \frac{\sqrt{M}l_i}{\sqrt{m}l_{i+1}}\Big)^{2p/(p-2)}
       \qquad \text{if }\be_i,\be_{i+1}\ge \al_\la.
\end{align*}
In both cases the choice for $a$ from \eqref{Gl a} and $C_1,C_2\ge 1$ imply
$a^{-1} l_{i+1} \le l_i \le a l_{i+1}$ for all $i\in\{0,\ldots,j-1\}$ and thus
$$
1 = l_0+\ldots+l_j
 \le \min\{l_0,\ldots,l_{j}\} \cdot (1+a+\ldots+a^j)
 \le  \min\{l_0,\ldots,l_{j}\} \cdot \frac{1}{s_j}
$$
which gives \eqref{Gl Apriori 8}.

{\it 3rd case: $J_1\ne \emptyset,J_2\ne \emptyset.$}\; As in the previous cases we obtain $a^{-1} l_{i+1} \le l_i\le a l_{i+1}$ for all $i\in J_1$ as well as $l_k,l_{k+1}\ge\frac{1}{a(j+1)}\ge s_j$ for all $k\in J_2$. For any given $i\in J_1$ choose $k(i)\in J_2$ such that $|k(i)-i|=\min\{ |k-i|: k\in J_2\}$. Then we have
\begin{align*}
l_i &\ge a^{-1}l_{i+1} \ge \ldots \ge a^{-(k(i)-i)}l_{k(i)}  \qquad
 \ge a^{-(j-1)}\cdot \frac{1}{a(j+1)} = s_j
  &&\text{in case }k(i)>i,\\
l_i &\ge a^{-1}l_{i-1} \ge \ldots \ge a^{-(i-k(i)-1)}l_{k(i)+1}
 \ge a^{-(j-1)}\cdot \frac{1}{a(j+1)} = s_j
  &&\text{in case }k(i)<i,
\end{align*}
which is all we had to show. This finishes the proof.
\end{proof}

%
%   \begin{bem} \label{Bem a-priori estimates} ~
%     \begin{itemize}
%       \item[a)] From the inequality $ms_j \le 2T_\la(\|v\|_\infty)\le
%       M t_j$ we can get explicit enclosures for $\|v\|_\infty$ with the aid of the estimates for
%       $T_\la$ from Proposition \ref{Prop 2} (ii).
%       \item[b)] The estimates from Lemma \ref{Lem Apriori estimates II} for a fixed $\la\ge 0$ only
%       require the inequality from assumption (B2) to hold for that particular value of $\la$.
%       In the proof of Theorem \ref{Thm 2} we will use this in the following way: Given a
%       nonlinearity $\tilde h:[0,1]\times\R\times\R_{\ge 0}\to\R$ which satisfies the inequality
%       from (B2) for $\la=0$, i.e. $m|z|^{p-2}z\le \tilde h(r,z,\xi)z\le M|z|^{p-2}z$, then every $j$-nodal
%       function $v$ with $-v'' = \tilde h(r,v(r),|v'(r)|)$ and $v(0)=v(1)=0$ satisfies $ms_j \le
%       2T_0(\|v\|_\infty)\le M t_j$.
%     \end{itemize}
%
%  \end{bem}

\section{Proof of Theorem \ref{Thm 1} and Theorem \ref{Thm 2}} \label{sec:proof-thm2}
Since we are going to use Leray-Schauder degree theory we introduce the solution operator $S_\lambda:Y\to Y$
associated to the boundary value problem \eqref{Gl eq ODE}. Using the Green's function
$$
  G:[0,1]\times [0,1]\to\R,\quad (x,s)\mapsto
 \begin{cases}
       x(1-s) &\text{if } 0\le x\le s\le 1, \\
       s(1-x) &\text{if } 0\le s\le x\le 1,
 \end{cases}
$$
of the differential operator $-\frac{d^2}{d x^2}$ on the interval $(0,1)$ associated to homogeneous Dirichlet boundary conditions we define
$$
S_\la(v) := v - \int_0^1 G(\cdot,s)h_\la(s,v(s),|v'(s)|)\ds.
$$
This defines a continuous and compact perturbation of the identity. A solution $(\la,v)$ of $S_\la(v)=0$ is a classical solution of \eqref{Gl 1st order eq}. In order to prove that the degree of $S_0$ over a suitable open set (whose closure does not contain the trivial solution; see \eqref{Gl Defn V}) is non-zero we introduce the homotopy ${H:Y\times [0,1]\to Y}$ given by
\begin{equation} \label{Gl Definition homotopy}
H(v,t)
 := v - \int_0^1 G(\cdot,s)\Big(t \cdot m|v(s)|^{p-2}v(s)+(1-t)\cdot
    h_0(s,v(s),|v'(s)|)\Big)\ds
%H(v,t):= v - \int_0^1 G(\cdot,s) \tilde h^t(s,v(s),|v'(s)|)\ds
\end{equation}
%\text{where }  \tilde h^t(r,z,\xi)= t m^2|z|^{p-2}z+(1-t)h_0(r,z,\xi)\Big).
which relates the original boundary value problem \eqref{Gl eq ODE} for $\la=0$ to the
autonomous boundary value problem
\begin{equation} \label{Gl Hilfs-BVP}
-v'' = m |v|^{p-2}v,\qquad v(0)=v(1)=0.
\end{equation}
Here, the positive number $m$ is given by assumption (B2), (B3). The following result is well-known, we include a proof for completeness.

\begin{prop} \label{Prop vjstar}
 For all $j\in\N_0$ the boundary value problem \eqref{Gl Hilfs-BVP} has a unique solution with precisely $j$ nodes in (0,1) and positive slope at 0. This solution, called $\zeta_j$, satisfies
 $$
   \|\zeta_j\|_\infty
   = \Big((j+1) \sqrt{\frac{2p}{m}}  \int_0^1 (1-s^p)^{-1/2}\ds\Big)^{2/(p-2)}
 $$
 and it is nondegenerate, i.e.\ the boundary value problem
 $$
 -\phi'' = (p-1)m |\zeta_j|^{p-2}\phi,\qquad \phi(0)=\phi(1)=0
 $$
 only has the trivial solution.
\end{prop}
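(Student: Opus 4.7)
The plan is to apply the energy/time-map method to the autonomous problem \eqref{Gl Hilfs-BVP}. Since $h_0(r,z,\xi) = m|z|^{p-2}z$ satisfies (B1) and (B2) with $\la=0$, Proposition~\ref{Prop 2} applies to any solution $\zeta$ with $j$ nodes in $(0,1)$ and $\zeta'(0)>0$: all zeros are simple, $\zeta$ has $j+1$ nodal intervals $I_0,\ldots,I_j$ with $\sign(\zeta|_{I_k})=(-1)^k$, and each $I_k$ contains a unique critical point. Multiplying $-\zeta''=m|\zeta|^{p-2}\zeta$ by $\zeta'$ and integrating on $I_k$ gives the conservation law
\[
\tfrac12(\zeta'(x))^2 + \tfrac{m}{p}|\zeta(x)|^p = \tfrac{m}{p}\al_k^p, \qquad \al_k := \|\zeta\|_{L^\infty(I_k)},
\]
so $I_k$ has length $2T_0(\al_k)$ with
\[
T_0(\al) = \int_0^\al \frac{dz}{\sqrt{(2m/p)(\al^p-z^p)}} = \sqrt{\tfrac{p}{2m}}\, \al^{(2-p)/2} \int_0^1(1-s^p)^{-1/2}\ds,
\]
while $|\zeta'|=\sqrt{(2m/p)\al_k^p}$ at both endpoints of $I_k$. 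Continuity of $|\zeta'|$ at the interior nodes forces $\al_0=\ldots=\al_j=:\al$, so all nodal intervals have length $1/(j+1)$, and solving $T_0(\al)=1/(2(j+1))$ for $\al$ yields the explicit formula for $\|\zeta_j\|_\infty$ claimed in the statement.

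Existence then follows by taking the unique positive solution of the ODE on $[0,1/(j+1)]$ with Dirichlet data and amplitude $\al$, and extending to $[0,1]$ by alternating reflection with sign flip. Uniqueness is immediate: any candidate solution satisfying the hypotheses must, by the analysis above, share the amplitude, the common nodal length $1/(j+1)$, and the alternating sign pattern starting from positive, hence coincide with $\zeta_j$.

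For the nondegeneracy statement the plan is to exploit the two continuous symmetries of \eqref{Gl Hilfs-BVP} to exhibit an explicit basis of the solution space of the linearized equation $-\phi''=(p-1)m|\zeta_j|^{p-2}\phi$. Translation invariance produces the solution $\phi_1:=\zeta_j'$, while the scaling $u(x)\mapsto \mu\, u(\mu^{(p-2)/2}x)$ produces, upon differentiation at $\mu=1$,
\[
\phi_2(x) := \zeta_j(x) + \tfrac{p-2}{2}\, x\,\zeta_j'(x),
\]
and a direct computation using $\zeta_j'''=-(p-1)m|\zeta_j|^{p-2}\zeta_j'$ confirms $-\phi_2''=(p-1)m|\zeta_j|^{p-2}\phi_2$. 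Their Wronskian at $x=0$ equals $\tfrac{p}{2}(\zeta_j'(0))^2\ne 0$, so $\{\phi_1,\phi_2\}$ spans the solution space. Any solution $\phi$ with $\phi(0)=0$ must then be a multiple of $\phi_2$, since $\phi_2(0)=\zeta_j(0)=0$ whereas $\phi_1(0)=\zeta_j'(0)\ne 0$. The remaining boundary condition requires $\phi_2(1)=\tfrac{p-2}{2}\zeta_j'(1)=0$, which is impossible because $\zeta_j'(1)\ne 0$ (otherwise ODE uniqueness would force $\zeta_j\equiv 0$). Hence $\phi\equiv 0$.

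The main point is identifying the scaling-derived solution $\phi_2$; once both $\phi_1$ and $\phi_2$ are in hand, nondegeneracy reduces to the one-line evaluation of $\phi_2$ at $x=1$.
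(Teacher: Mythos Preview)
Your proof is correct and follows essentially the same route as the paper: the time-map identity $T_0(\al)=\tfrac{1}{2(j+1)}$ for the existence/uniqueness/norm formula, and differentiation of the scaling family for nondegeneracy. The paper parametrizes the scaling as $v_\al(x)=\al^{2/(p-2)}\zeta_j(\al x)$ and obtains $\varphi(x)=\tfrac{2}{p-2}\zeta_j(x)+x\zeta_j'(x)$, which is a scalar multiple of your $\phi_2$; it then observes directly that this spans the space of solutions of the linearized equation with $\phi(0)=0$, so your introduction of the second solution $\phi_1=\zeta_j'$ and the Wronskian computation is a harmless elaboration rather than a different idea.

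One small point: your sentence ``otherwise ODE uniqueness would force $\zeta_j\equiv 0$'' is not quite a valid justification for $\zeta_j'(1)\ne 0$, since the nonlinearity $m|z|^{p-2}z$ is not Lipschitz at $z=0$ and the initial value problem at a double zero is \emph{not} uniquely solvable. You already have the correct reason at hand, however: you invoked Proposition~\ref{Prop 2} at the outset, which shows every zero of a nontrivial solution is simple, so $\zeta_j'(1)\ne 0$ follows immediately from that.
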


\begin{proof}
After rescaling we may assume $m=1$. Observe that \eqref{Gl Hilfs-BVP} has a unique positive solution $\zeta_0$ which extends to an odd, 2-periodic function on $\R$. Then
$\zeta_j(x)=(j+1)^\frac{2}{p-2}\zeta_0((j+1)x)$ is the unique  solution of 
\eqref{Gl Hilfs-BVP} with $j$ nodes and $\zeta_j'(0)>0$. Notice that the explicit formula for $T_0$ from Remark \ref{Bem T0} and the equation $T_0(\|\zeta_0\|_\infty)=\frac12$
%from Proposition~\ref{Prop App A 3}~(i)
yield the desired formula for $\|\zeta_0\|_\infty$, hence for $\|\zeta_j\|_\infty$.

It remains to prove non-degeneracy of $\zeta_j$. Since for all $\al>0$ the function $v_\al(x):= \al^{2/(p-2)}\zeta_j(\al x)$ satisfies $v_\al(0)=0$ and $-v_\al''=|v_\al|^{p-2}v_\al$  we obtain that the function
$\varphi := \frac{d}{d\al}|_{\al=1} v_\al$ spans the one-dimensional linear space consisting of all solutions of the linear differential equation
$-\phi'' = (p-1)|\zeta_j|^{p-2}\phi$ with $\phi(0)=0$. From $\zeta_j(1)=0$ and $\zeta_j'(1)\ne 0$ we infer
\begin{align*}
\varphi(1) = \frac{d}{d\al}\Big|_{\al=1} v_\al(1)
 = \frac{2}{p-2} \zeta_j(1) +\zeta_j'(1)
 \ne 0.
\end{align*}
Hence the boundary value problem
$$
-\phi'' = (p-1)|\zeta_j|^{p-2}\phi,\qquad \phi(0)=\phi(1)=0,
$$
only has the trivial solution which proves that $\zeta_j$ is non-degenerate.
\end{proof}

\begin{altproof}{Theorem \ref{Thm 2}}
We fix $j\in\N_0$ and prove the assertion for $\cD_j^+$ only; the proof for $\cD_j^-$ proceeds analogously. For $t\in [0,1]$ we define
$$
h^t:[0,1]\times\R\times\R_{\ge 0} \to \R,\quad
h^t(r,z,\xi) = t\cdot m|z|^{p-2}z+(1-t)\cdot h_0(r,z,\xi)
$$
so that solving $H(v,t)=0$ is equivalent to solving the boundary value problem
$$
-v''(r) = h^t(r,v(r),|v'(r)|),\qquad v(0)=v(1)=0.
$$
Since $h^t$ satisfies (B1) as well as the inequality from (B2) for $\la=0$, the a-priori estimates from
Lemma~\ref{Lem Apriori estimates II} for $\la=0$ yield $0\notin H(\pa V_j^+,[0,1])$ for the homotopy $H$ from
\eqref{Gl Definition homotopy} where the bounded open set $V_j^+\subset Y$ is given by
\begin{equation}\label{Gl Defn V}
\begin{aligned}
V_j^+ &:= \Big\{ v\in Y: v\ne 0 \text{ has precisely }j+1 \text{ nodal intervals }
        I_0,\ldots,I_j,\ \sign(v|_{I_k})=(-1)^k \\
  &\hspace{2cm} \text{and }
      \frac{\sqrt{m}\cdot s_j}{4} < T_0(\|v\|_\infty) < \sqrt{M}\cdot t_j,\,
      \|v'\|_\infty  < 2\sqrt{MG_0(\|v\|_\infty)}
      \Big\}.
\end{aligned}
\end{equation}
Moreover, Proposition \ref{Prop vjstar} gives that $\zeta_j$ is the only solution of $H(v,1)=0$ in $V_j^+$ and that this solution is non-degenerate. Hence, the homotopy invariance of the Leray-Schauder degree yields
\begin{equation} \label{Gl degree nonzero}
\deg(S_0,V_j^+,0)
 = \deg(H(\cdot,0),V_j^+,0) = \deg(H(\cdot,1),V_j^+,0)
 = \ind(H(\cdot,1),\zeta_j) \ne 0.
\end{equation}
Now let $\cD_j^+$ denote the maximal connected set in $\Si_j$ containing the nonempty set of all solutions $(0,v)$ of $S_0(v)=0$ with $v\in V_j^+$. We define the subcontinua $\cD_{j,\le0}^{+},\cD_{j,\ge 0}^{+}$ by
$$
\cD_{j,\le 0}^{+}:= \{ (\la,v)\in\cD_j^+ : \la\le 0\},\qquad
\cD_{j,\ge 0}^{+}:= \{ (\la,v)\in\cD_j^+ : \la\ge 0\}.
$$
For a better understanding of the following proofs of the parts (i)-(iii) we include Figure~\ref{Fig
apriori bounds} which depicts the a priori bounds using dashed lines and the boundary of the open sets
$\mathcal{O}$ appearing in (i),(iii) using dotted lines.

\medskip

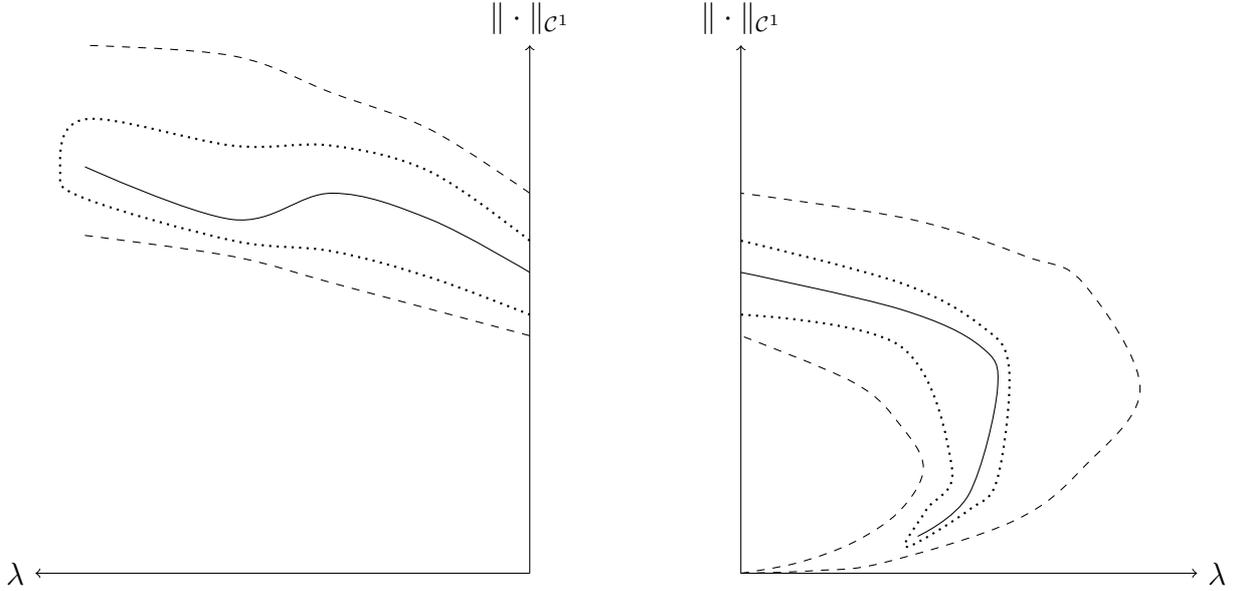
\begin{figure}[!htp]
\centering
 \subfigure
 {
  \begin{tikzpicture}[yscale=0.7, xscale=0.65]
	  \draw[->] (0,0) -- (-10,0) node[left] {$\la$};
	  \draw[->] (0,0) -- (0,10)  node[above] {$\|\cdot\|_{\cC^1}$};
	  \draw [dotted,thick] plot[smooth, tension=0.6] coordinates { (0,6.3)  (-2,7.6) (-4,8.1) (-6,8.1) (-9,8.6)
  	   (-9.5,7.6) (-9,7.1) (-6,6.3) (-4,6.1) (-2,5.6) (0,4.9)};
  	  \draw plot[smooth, tension=0.6] coordinates {(0,5.7) (-2,6.7) (-4,7.2) (-6,6.7) (-9,7.7)};
  	  \draw [dashed] plot[smooth, tension=0.6] coordinates {  (0,7.2)  (-2,8.4) (-4,9.1) (-6,9.8) (-9,10)};
  	  \draw [dashed] plot[smooth, tension=0.6] coordinates {  (0,4.5)  (-2,5) (-4,5.5) (-6,6) (-9,6.4)};
%   	  \draw plot[smooth, tension=0.6] coordinates { (0,5.2)  (-2,6.5) (-4,7) (-6,7) (-9,7.5)
%   	   (-9.5,6.5) (-9,6) (-6,5.2) (-4,5) (-2,4.5) (0,3.8)};
%   	  \draw plot[smooth, tension=0.6] coordinates {(0,4.5) (-2,5.5) (-4,6) (-6,5.5) (-9,6.5)};
%   	  \draw [dashed] plot[smooth, tension=0.6] coordinates {  (0,6)  (-2,7.5) (-4,8.3) (-6,9) (-9,9.5)};
%   	  \draw [dashed] plot[smooth, tension=0.6] coordinates {  (0,3)  (-2,3.6) (-4,4.2) (-6,4.8) (-9,5.4)};
  	  %\draw (3pt,2) -- (-3pt,2);
  	  %\draw (3pt,8) -- (-3pt,8);
  \end{tikzpicture}
 }
 \hfill\hspace{-2cm}
 \subfigure
 {
  \begin{tikzpicture}[yscale=0.7, xscale=0.75]
	  \draw[->] (0,0) -- (8,0) node[right] {$\la$};
	  \draw[->] (0,0) -- (0,10)  node[above] {$\|\cdot\|_{\cC^1}$};
  	  \draw [dotted,thick] plot[smooth, tension=0.6] coordinates {  (0,6.3) (2.8,5.5) (4.2,4.7) (4.7,3.9)
  	  (4.5,1.8) (4,1.2) (2.9,0.5) (3.25, 1.2)  (3.7,2) (2.8,4.3) (0,4.9)};
  	  \draw plot[smooth, tension=0.6] coordinates {(3.1,0.7) (4,1.5) (4.5,3.5) (4.2,4.3) (2.8,5) (0,5.7)};
  	  \draw [dashed] plot[smooth, tension=0.6] coordinates {  (0,0)  (1,0.2) (2,0.6) (2.8,1.2) (3.2,2)
  	  (2.8,2.8) (2,3.6) (0,4.5)};
  	  \draw [dashed] plot[smooth, tension=0.6] coordinates {  (0,0)  (1.3,0.05) (2.5,0.2) (4.8,1) (6,2) (7,3.5)
  	  (6,5.5) (5,6) (3,6.7) (0,7.2)};
  	  %\draw (3pt,2) -- (-3pt,2);
  	  %\draw (3pt,8) -- (-3pt,8);
  \end{tikzpicture}
 }
\caption{A priori bounds, the open sets $\mathcal{O}$ and the solution continua}
\label{Fig apriori bounds}
\end{figure}

\medskip

{\it Proof of (i):\;}
Since $\cD_j^+\subset\Si_j$ is connected and contains solutions of \eqref{Gl eq ODE} lying in $\{0\}\times V_j^+$ we deduce that every $(\la,v)\in\cD_j^+$ belongs to $\Si_j^+$. Indeed, one may use Proposition~\ref{Prop 2} and 
Lemma~\ref{Lem Apriori estimates I} to show that the set of nontrivial solutions with $j+1$ nodal intervals $I_0,\ldots,I_j$ and sign $(-1)^k$ on $I_k$ is open and closed in $\Si$; see Remark~\ref{Bem apriori bounds vs deadcores}. Furthermore, the a-priori estimates from Lemma \ref{Lem Apriori estimates I} and 
Lemma~\ref{Lem Apriori estimates II} imply that for all $\eps>0$ there is a positive
number $c_\eps$ such that $\|v\|_\infty\ge c_\eps$ for all $(\la,v)\in \Si_j^+$ with $|\lambda|\geq \eps$. Hence, we have $(\la,0)\notin\ov{\Si}_j^+$ for all $\la\ne 0$ and thus
$$
  \ov{\cD_j^+} \sm \cD_j^+ \subset \{(0,0)\}.
$$

It remains to show $(0,0)\in\ov{\cD_{j,\ge 0}^+}$ for all $j\in\N_0$. If this were not true then the a-priori estimates from Lemma \ref{Lem Apriori estimates II} and Whyburn's Lemma (see for example Lemma 3.5.2 in \cite{Cha_Methods_in_nonlinear}) would provide a bounded relatively open set $\cO$ in $\R_{\ge0}\times Y$ such that $\cD_{j,\ge0}^+\subset\cO$, $(0,0)\notin\cO$, and such that the relative boundary of $\cO$ does not contain any solution of the equation $S_\la(v)=0$; see
Figure~\ref{Fig apriori bounds}. Since the slice $(\cD_j^+)_0$ contains all solutions of $S_0(v)=0$ in $V_j^+$ so does $\cO_0$, and we deduce, using the excision property and the generalized homotopy invariance of the Leray-Schauder degree,
$$
  \deg(S_0,V_j^+,0) = \deg(S_0,\cO_0,0) = \lim_{\la\to +\infty} \deg(S_\la,\cO_\la,0)
   = 0
$$
which contradicts \eqref{Gl degree nonzero}. Hence, we obtain $(0,0)\in\ov{\cD_j^+}$ and claim (i) is proved.

{\it Proof of (ii):\;} This follows immediately from (i) because the sets $\cD_j^\pm$, $\cD_k^\pm$ are pairwise disjoint by definition.

{\it Proof of (iii):\;} We first show that $\cD_{j,\le 0}^{+}$ is unbounded to the left. Otherwise we could use Whyburn's Lemma and the a-priori estimates from Lemma~\ref{Lem Apriori estimates I} to find a bounded relatively open set $\cO$ in $\R_{\le 0}\times Y$ with $\cD_{j,\le 0}^{+}\subset \cO$ such that the relative boundary of $\cO$ does not contain any solution of \eqref{Gl eq ODE}. As above we get
$$
  \deg(S_0,V_j^+,0) = \deg(S_0,\cO_0,0) = \lim_{\la\to -\infty} \deg(S_\la,\cO_\la,0) = 0
$$
which contradicts \eqref{Gl degree nonzero}. Hence, we obtain
\begin{equation} \label{Gl pr negative part}
\pr(\cD_{j,\le 0}^{+})= (-\infty,0].
\end{equation}
Now let us show that the positive numbers $\La_j^+:=\max\pr(\cD_{j,\ge0}^{+})$ tend to infinity as $j\to\infty$. The continuum $\cD_{j,\ge0}^+$ contains a solution $(0,v_j)$ with $v_j\in V_j^+$ and \eqref{Gl Defn V} implies $T_0(\|v_j\|_\infty)<\sqrt{M}\cdot t_j$. The formula for $T_0$ from Remark \ref{Bem T0} and $ t_j\le 1$ give
\begin{equation} \label{Gl Proof of Them part (iii)}
\|v_j\|_\infty
 \ge \Big(\frac{1}{ t_j}\sqrt{\frac{p}{2M}} \int_0^1 \frac{1}{\sqrt{1-s^p}}\ds
	\Big)^{2/(p-2)} \ge \Big( \sqrt{\frac{p}{2M}}\int_0^1 \frac{1}{\sqrt{1-s^p}}\ds
	\Big)^{2/(p-2)}.
\end{equation}
Since $\cD_{j,\ge0}^+$ is connected, for any $A>0$ smaller than the right hand side in \eqref{Gl Proof of Them part (iii)} there is a solution $(\mu_j,w_j)\in\cD_{j,\ge0}^+$ such that $\|w_j\|_\infty = A$. By definition of $\La_j^+$ we moreover have $\mu_j\le\La_j^+$. If a subsequence of $(\La_j^+)_j$ were bounded from above then so would a subsequence of $(\mu_j)_j$, and the Arzel\`{a}-Ascoli-Theorem would yield a
convergent subsequence of $(\mu_j,w_j)_j$. Since the number of nodes of $w_j$ tends to infinity as $j\to\infty$ the limit function would be a solution of \eqref{Gl eq ODE} having at least one zero of multiplicity two. By Proposition~\ref{Prop 2} this limit function would have to be trivial which contradicts $\|w_j\|_\infty=A>0$ for all $j\in\N_0$. Hence, the assumption was false and thus $\La_j^+\to\infty$ as $j\to\infty$.

\medskip

{\it Proof of (iv).}\; From the lower a-priori estimates from Lemma
\ref{Lem Apriori estimates I} we obtain
\begin{align} \label{Gl Proof of Thm part (iv) 1}
\limsup_{j\to\infty}\cD_{j,\le 0}^{+} = \emptyset.
\end{align}
Now assume for contradiction that there is $\eps>0$ and  $(\la_j,v_j)\in\cD_{j,\ge0}^+$ satisfying the inequalities $\eps\le \|v_j\|_\infty\le\eps^{-1}$ and $0\le \la_j\le \eps^{-1}$ for infinitely many $j\in\N_0$. Arguing as in the proof of (iii) we may use the Arzel\`{a}-Ascoli Theorem to find a uniformly converging subsequence which converges to the trivial solution which contradicts $\eps>0$. Hence, the assumption was false and we obtain
\begin{align} \label{Gl Proof of Thm part (iv) 2}
\limsup_{j\to\infty} \cD_{j,\ge0}^+
 \subset \bigcap_{\eps>0} \{ (\la,v) \in \R_{\ge 0}\times Y:
	(\la,\|v\|_\infty) \notin [0,\eps^{-1}]\times[\eps,\eps^{-1}] \}
	= \R_{\ge 0}\times\{0\}.
\end{align}
From \eqref{Gl Proof of Thm part (iv) 1} and \eqref{Gl Proof of Thm part (iv) 2} we get
\begin{align} \label{Gl Proof of Thm part (iv) 3}
\limsup_{j\to\infty} \cD_j^+ \subset \R_{\ge 0}\times\{0\}.
\end{align}
On the other hand (i) implies $(0,0)\in \liminf_{j\to\infty} \cD_j^+$ so that assertion (iv) is proved once we show $(\la,0)\in \liminf_{j\to\infty} \cD_j^+$ for all $\la>0$.  Indeed, once this is proved we have
$$
  \R_{\ge 0}\times\{0\} \subset \liminf_{j\to\infty} \cD_j^+
  \subset \limsup_{j\to\infty} \cD_j^+ \subset \R_{\ge 0}\times\{0\}
$$
which gives the result. Since the claim $(\la,0)\in \liminf_{j\to\infty} \cD_j^+$ is a direct consequence of (iv)(a) it remains to prove the claims (iv)(a) and (iv)(b).

{\it Proof of (iv)(a),(b):\;} Let $\la\in\R$ be fixed. According to (iii) there is a smallest number $j^+(\la)\in\N_0$ such that
$$
\La_j^+ > \la \qquad\text{for all }j\ge j^+(\la),
$$
in particular $j^+(\la)=0$ for all $\la\le 0$. From \eqref{Gl pr negative part} and the
definition of $\La_j^+$ we infer $\la\in(-\infty,\La_j^+]=\pr(\cD_j^+)$   so that we may define $\un{v}_j^+,\ov{v}_j^+$ to be the nontrivial solutions of \eqref{Gl eq ODE} in $\cD_j^+$ of least respectively largest maximum norm. These (not necessarily different) solutions exist due to our a-priori estimates and the Arzel\`{a}-Ascoli Theorem since minimizing and maximizing sequences of solutions of \eqref{Gl eq ODE} are equibounded away from zero or infinity. Now let us prove  $\|\ov{v}_j^+\|_\infty\to\infty$ as $j\to\infty$ for every $\la\in\R$ and $\|\un{v}_j^+\|_\infty\to 0$ as $j\to\infty$ for every $\la>0$. Observe that  $\|\un{v}_j^+\|_\infty\to0$ implies $\|\un{v}_j^+\|_{\cC^1}\to0$.

\medskip

So let $\eps>0$ be arbitrary. From \eqref{Gl Proof of Thm part (iv) 3} we infer that there is a natural number $\bar j^+\ge j^+(\la)$ depending on $\la$ and $\eps$ such that
$$
  (\|v\|_\infty,\mu) \notin
  [0,\eps^{-1}]\times [-|\la|,0] \cup [\eps,\eps^{-1}]\times (0,\la]
  \quad\text{for all }(v,\mu)\in\cD_j^+, j\ge \bar j^+,
$$
see Figure~\ref{Fig proof of (iv)}. In case $\la>0$ the connectedness of $\cD_j^+$ implies $\|\un{v}_j^+\|_\infty < \eps,\|\ov{v}_j^+\|_\infty>\eps^{-1}$ whereas in case $\la>0$ we have $\|\ov{v}_j^+\|_\infty > \eps^{-1}$. Since $\eps>0$ was chosen arbitrarily we obtain that $(\la,0)$ is a bifurcation point for \eqref{Gl eq ODE} for positive $\la$ and that $\la$ is a bifurcation point at infinity for all $\la\in\R$. Finally, Proposition~\ref{Prop Vorbereitung AprioriII 1} and 
Lemma~\ref{Lem Apriori estimates II} imply that $(\la,0)$ is not a branching point for \eqref{Gl eq ODE} since the number of nodal intervals is constant along continua away from the trivial solution. Similarly the a-priori estimates from 
Lemma~\ref{Lem Apriori estimates I} and~\ref{Lem Apriori estimates II} imply that there is no branching point at infinity at any given $\lambda\in\R$.

\medskip

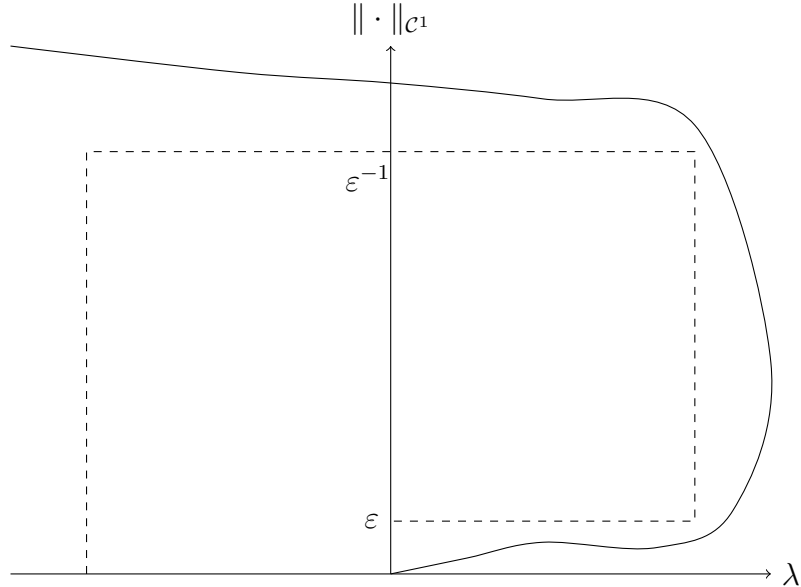
\begin{figure}[!htp]
\centering
\begin{tikzpicture}[yscale=0.7, xscale=0.5]
  \draw[->] (-10,0) -- (10,0) node[right] {$\lambda$};
  \draw[->] (0,0) -- (0,10) node[above] {$\|\cdot\|_{\cC^1}$};
  \draw [dashed] (-8,0) -- (-8,8) -- (8,8) -- (8,1) -- (0,1);
  \node at (-0.5,1) {$\eps$};
  \node at (-0.6,7.5) {$\eps^{-1}$};
  \draw plot[smooth, tension=0.6] coordinates { (0,0) (2,0.3)  (4,0.6) (7,0.5) (9,1.2)
  	  (10,4) (8,8.5) (4,9) (0,9.3) (-4,9.5) (-10, 10)};	    	
\end{tikzpicture}
  \caption{Illustration for the proof of part (iv)}
  \label{Fig proof of (iv)}
\end{figure}

\medskip

{\it Proof of (v):\;} Now assume that (B4) is satisfied. Then every solution $(\la,v)$ of \eqref{Gl eq ODE} satisfies
$$
\int_0^1 v'(r)^2\,dr = \int_0^1 h_\la(r,v(r))v(r)\,dr.
$$
In case $\la>0$ the solutions $(\la,\un{v}_j^+)$ from (iv)(a) exist for $j\ge j^+(\la)$. Using the above identity as well as $\|\un{v}_j^+\|_\infty\to 0$ as $j\to\infty$ we obtain from the second inequality in (B4)
\begin{align*}
I_\la(\un{v}_j^+)
 &= \frac12\int_0^1 (\un{v}_j^+)'(r)^2\,dr - \int_0^1 H_\la(r,\un{v}_j^+(r))\,dr \\
 &= \frac12\int_0^1 \Big( h_\la(r,\un{v}_j^+(r))\un{v}_j^+(r)
     - 2H_\la(r,\un{v}_j^+(r))\Big)\,dr \\
 &\to 0^-.
\end{align*}
Now let $\la\in\R$ be arbitrary so  that the solutions $(\la,\ov{v}_j^+)$ from (iv)(b) exist for $j\ge j^+(\la)$. In order to prove $I_\la(\ov{v}_j^+)\to \infty$ as $j\to\infty$ we use the estimate
\begin{align*}
\|\ov{v}_j^+\|_\infty^2
 &\le \int_0^1 |(\ov{v}_j^+)'(r)|^2\,dr \\
 &= (1-t) \int_0^1 h_\la(r,\ov{v}_j^+(r))\ov{v}_j^+(r)\,dr
     + t\cdot \Big(2 I_\la(\ov{v}_j^+) + 2\int_0^1 H_\la(r,\ov{v}_j^+(r))\,dr \Big) \\
 &= 2tI_\la(\ov{v}_j^+) + \int_0^1 2t H_\la(r,\ov{v}_j^+(r))
     + (1-t)h_\la(r,\ov{v}_j^+(r))\ov{v}_j^+(r)\,dr
\end{align*}
for all $t\in\R$. From the first inequality in assumption (B4) we obtain
$h_\la(r,z)z\ge \mu H_\la(r,z)$ for some $\mu>2$ and all $r\in [0,1]$, $|z|\ge z_0$. Using the above estimate for $t=\tfrac{\mu}{\mu-2}$ we obtain
$2t H_\la(r,z)+ (1-t) h_\la(r,z)z\le 0$ for all $r\in [0,1],|z|\ge z_0$ and thus
$$
\|\ov{v}_j^+\|_\infty^2 \le 2tI_\la(\ov{v}_j^+) + C
 \qquad\text{where } C:= \max_{|z|\le z_0,r\in [0,1]} 2t H_\la(r,z)+ (1-t) h_\la(r,z)z
$$
From $\|\ov{v}_j^+\|_\infty\to \infty$ as $j\to\infty$ we obtain the result.
\end{altproof}

\medskip

\begin{altproof}{Theorem \ref{Thm 1}}
Let the assumptions of Theorem \ref{Thm 1} hold, so that $f_\la$ is a nonlinearity satisfying the assumptions (A1), (A2), (A3) for $1<q<2<p<\infty$ and $m_1,M_1>0$ and $K_1(\la,s)$ as required. Let then $h_\la$ be defined as in \eqref{Gl Definition h}, i.e.
$$
h_\la(r,z,\xi) = \phi'(r)^2f_\la(\phi(r),z,\xi/\phi'(r)).
$$
Then $h_\la$ satisfies (B1), (B2), (B3) where $m,M,K(\la,s)$ are chosen as in \eqref{Gl def mMKc}. Indeed, the estimate $m_2\le \phi'(r)\le M_2$ for all $r\in [0,1]$ from Proposition~\ref{Prop 1}~(ii) yields the following inequality for $r\in [0,1]$, 
$0\leq |z|,\xi\le s$:
\begin{align*}
|\pa_rh_\la(r,z,\xi)|
 &= \Big| 2\phi'(r)\phi''(r) f_\la(\phi(r),z,\xi/\phi'(r))
     + \phi'(r)^3\pa_rf_\la(\phi(r),z,\xi/\phi'(r))\\
 &\hspace{1cm} - \xi\phi''(r)\pa_\xi f_\la(\phi(r),z,\xi/\phi'(r)) \Big| \\
 &\le (2|\phi'(r)||\phi''(r)|+(|\phi'(r)|^3
     + s|\phi''(r)|)K_1(\la,s/m_2)) \cdot |f_\la(\phi(r),z,\xi/\phi'(r))|\\
 &\le (2\|\phi''(\phi')^{-1}\|_\infty+(\|\phi'\|_\infty
     + s\|\phi''(\phi')^{-2}\|_\infty)K_1(\la,s/m_2)) \cdot |h_\la(r,z,\xi)|\\
 &\le (2m_2^{-1}\|\phi''\|_\infty
     + (M_2+sm_2^{-2}\|\phi''\|_\infty)K_1(\la,s/m_2)) \cdot |h_\la(r,z,\xi)|\\
 &\le K(\la,s) |h_\la(r,z,\xi)|,
\end{align*}
and
\begin{align*}
|\pa_\xi h_\la(r,z,\xi)|
 &\le |\phi'(r)| |\pa_\xi f_\la(\phi(r),z,\xi/\phi'(r))| \\
 &\le |\phi'(r)| K_1(\la,s/m_2) \cdot  |f_\la(\phi(r),z,\xi/\phi'(r)))| \\
 &\le m_2^{-1} K_1(\la,s/m_2) \cdot  |h_\la(r,z,\xi)| \\
 &\le K(\la,s)  |h_\la(r,z,\xi)|.
\end{align*}
Moreover, if $f_\la$ satisfies (A4) then $h_\la$ satisfies (B4). By Theorem~\ref{Thm 2} there are solution continua $\cD_j^\pm$ enjoying the properties (i)-(v) from Theorem \ref{Thm 2}. We set
$$
\cC_j^\pm
 := \{ (\la,(v\circ\phi^{-1})(|\cdot|))\in\R\times X : (\la,v)\in \cD_j^\pm\}
$$
where $\phi:[0,1]\to [\rho_1,\rho_2]$ is the diffeomorphism from \eqref{Gl Def phi}. Then
Proposition~\ref{Prop 1} implies that $\cC_j^\pm$ consists of solutions of \eqref{Gl eq ODE} having precisely
$j+1$ interior nodal annuli $A_0,\ldots,A_j$ with sign $\pm(-1)^k$ on $A_k$. The claims (i)-(iv) from
Theorem~\ref{Thm 1} follow directly from the corresponding statements in Theorem~\ref{Thm 2} and
$\|v\circ\phi^{-1}(|\cdot|)\|_{L^\infty(\Om)}= \|v\|_{\infty}$. The proof of claim (v) is, up to textual
modifications, the same as in Theorem~\ref{Thm 1} so that the proof is finished.
\end{altproof}

\section{Appendix A - The one-dimensional Ambrosetti-Brezis-Cerami problem} \label{sec 4}
In this section we present the time map analysis which allows to find all nontrivial solutions of the one-dimensional Ambrosetti-Brezis-Cerami problem
\begin{equation}\label{Gl 1D ABC problem}
- u'' = \la |u|^{q-2}u + |u|^{p-2}u = g_\la(u) \qquad\text{in }(0,1),\qquad  u(0)=u(1)=0,
\end{equation}
where $1<q<2<p<\infty$. As before we set $G_\la(z)=\frac{\la}{q}|z|^q+\frac{1}{p}|z|^p$ for $z\in\R$ and
\[
T_\la(\al) = \int_0^\al \frac{1}{\sqrt{2(G_\la(\al)-G_\la(s)})}\ds.
\]
We recall that a subinterval $I\subset [0,1]$ is called a nodal interval of a solution $u$ of \eqref{Gl 1D ABC problem} if $|u|$ is positive on $\mathring{I}$ and vanishes identically on $\pa I$. A first step towards a complete picture of all nontrivial solutions of \eqref{Gl 1D ABC problem} is the following result.

\begin{prop}\label{Prop App A 1}
Let $\la\in\R$ and let $u$ be a solution of \eqref{Gl 1D ABC problem} with nodal interval $[a,b]\subset [0,1]$. Then $(a,b)$ contains precisely one critical point. It is given by $\xi=\frac{a+b}{2}$ and we have $u(\xi+t)=u(\xi-t)$ for $|t|\le\frac{b-a}{2}$.
\end{prop}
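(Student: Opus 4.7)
The plan is to exploit two standard tools for the autonomous equation $-u'' = g_\la(u)$: the conserved energy
\[
E := \tfrac12\, u'(r)^2 + G_\la(u(r)),
\]
which is constant on $[a,b]$ (multiply the equation by $u'$ and integrate), together with uniqueness for the Cauchy problem, which applies wherever $g_\la$ is locally Lipschitz --- namely in the region $\{u > 0\}$, since $q<2$ spoils Lipschitz regularity at the origin. Evaluating $E$ at the endpoints, where $u = 0$, gives $E = \tfrac12 u'(a)^2 = \tfrac12 u'(b)^2 \ge 0$. Without loss of generality $u > 0$ on $(a,b)$, so the maximum $\al := \max_{[a,b]} u > 0$ is attained at some $\xi \in (a,b)$ with $u'(\xi)=0$ and $E = G_\la(\al)$.

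To place $\xi$ at the midpoint and obtain the reflection symmetry I would introduce $v(r) := u(2\xi - r)$. Autonomy of the equation makes $v$ another solution, matching $u$ to first order at $\xi$. Since $g_\la$ is $C^1$ near the positive value $\al$, ODE uniqueness forces $u \equiv v$ on the maximal open interval around $\xi$ on which both functions remain strictly positive; passing to the closure, if $\xi \ne (a+b)/2$ --- say $\xi - a < b - \xi$ --- then the identity $u(\xi + t) = u(\xi - t)$ evaluated at $t = \xi - a$ would yield $u(2\xi - a) = u(a) = 0$ at the interior point $2\xi - a \in (a,b)$, contradicting $u > 0$ there. Hence $\xi = (a+b)/2$ and $u(\xi + t) = u(\xi - t)$ for $|t| \le (b-a)/2$.

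For uniqueness of the critical point, suppose $\eta \in (a,b) \setminus \{\xi\}$ satisfies $u'(\eta) = 0$; by the symmetry already established I may take $\eta \in (\xi, b)$ and choose it smallest. The inequality $g_\la(\al) > 0$ --- a consequence of $\al > 0$ together with $G_\la(\al) = E \ge 0$, which forces $\al$ to lie in the region where $G_\la$ is strictly increasing --- yields $-u''(\xi) > 0$, so $\xi$ is a strict local maximum and $u$ decreases strictly on $(\xi, \eta)$; hence $0 < u(\eta) < \al$. On the other hand, energy conservation forces $G_\la(u(\eta)) = E = G_\la(\al)$. The main delicate point appears here when $\la < 0$: the potential $G_\la$ is then not monotone on $[0,\infty)$, so the ``unique positive root'' step is not automatic. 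It is rescued by the observation that $E \ge 0$ confines both $\al$ and $u(\eta)$ to the set $\{z \ge 0 : G_\la(z) \ge 0\}$, which equals $[0,\infty)$ for $\la \ge 0$ and $\{0\} \cup [z_0, \infty)$ for $\la < 0$ (with $z_0 = (-\la p/q)^{1/(p-q)}$), and on this set $G_\la$ is strictly increasing; hence $u(\eta) = \al$, contradicting $u(\eta) < \al$.
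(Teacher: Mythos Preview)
Your argument is correct and rests on the same two ingredients as the paper --- energy conservation and ODE uniqueness away from $u=0$ --- but the order of the steps is inverted. The paper first shows that \emph{every} critical point $\xi$ satisfies $-u''(\xi)u(\xi)>0$, using the algebraic identity
\[
g_\la(z)z \;=\; q\,G_\la(z) + \frac{p-q}{p}|z|^p,
\]
which, combined with $G_\la(u(\xi))=\tfrac12 u'(a)^2\ge 0$, gives $-u''(\xi)u(\xi)=\tfrac{q}{2}u'(a)^2+\tfrac{p-q}{p}|u(\xi)|^p>0$ without any case distinction on the sign of $\la$. Uniqueness of the critical point follows immediately (two strict maxima would force an interior strict minimum, which is again a critical point), and symmetry about it comes afterwards. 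You instead fix the global maximum first, establish symmetry and the midpoint property, and only then exclude further critical points by analysing the level set $\{G_\la\ge 0\}$. Both routes work; the paper's identity is slightly more economical since it bypasses the separate treatment of $\la<0$. One small imprecision: for $\la<0$ the set $\{0\}\cup[z_0,\infty)$ is not literally a set on which $G_\la$ is strictly increasing (it takes the value $0$ at both $0$ and $z_0$), but since you have already recorded $u(\eta)>0$ this does not affect the conclusion.
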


\begin{proof}
In order to prove the first claim we show that every critical point $\xi$ of $u$ is a local maximum in case $u(\xi)>0$ and a local minimum in case $u(\xi)<0$. Indeed, multiplying the differential equation \eqref{Gl 1D ABC problem} with $2u'$ and integrating the resulting equation from $a$ to $\xi$ gives
$$
  u'(a)^2 = \frac{2\la}{q}|u(\xi)|^q+\frac{2}{p}|u(\xi)|^p
$$
where we have used $u'(\xi)=u(a)=0$. Hence, the first claim follows from
$$
-u''(\xi)u(\xi)
 = \la |u(\xi)|^q + |u(\xi)|^p
 = \frac{q}{2}u'(a)^2 + \frac{p-q}{p}|u(\xi)|^p
 > 0.
$$
Moreover, since $g_\la$ is locally Lipschitz-continuous on $\R\sm\{0\}$ we obtain that the initial value problem at $\xi$ has a unique solution which implies $u(\xi+t)=u(\xi-t)$ for $a\le \xi-t,\xi+t\le b$. From $u(a)=u(b)=0$ and $|u|>0$ on $(a,b)$ we infer $\xi=\frac{a+b}{2}$ which gives the result.
\end{proof}

Now let us determine all solutions of \eqref{Gl 1D ABC problem} with a given nodal interval $[a,b]\subset[0,1]$. The next Proposition shows that every solution $\al>0$ of the scalar equation $T_\la(\al) = \tfrac{b-a}{2}$ generates precisely two such solutions which have maximum norm $\al$. To this end we introduce the function $\phi_{\al,\la}$ given by
\begin{equation} \label{Gl def phi}
\phi_{\al,\la}(z)
 = \int_0^z \frac{1}{\sqrt{2(G_\la(\al)-G_\la(s))}}\ds
 \qquad\text{for } 0\le z\le \al
\end{equation}
which is well-defined for $\al\ge (\tfrac{p\la_-}{q})^{1/(p-q)}$ where $\la_-=\max\{0,-\la\}$. Notice that in case  $\al<(\tfrac{p\la_-}{q})^{1/(p-q)}$ the argument of the square root appearing in \eqref{Gl def phi} attains negative values. The critical case $\alpha=(\tfrac{p\la_-}{q})^{1/(p-q)}$ will later play a central role in the analysis of dead-core solutions.

\begin{prop}\label{Prop App A 2}
Let $\la\in\R,[a,b]\subset [0,1]$ and $\al>0$. There is a solution $u$ of \eqref{Gl 1D ABC problem} with nodal interval $[a,b]$ and $\|u\|_\infty=\al$ if and only if
\begin{equation}\label{Gl Timemap}
\al\ge \big(\frac{p\la_-}{q}\big)^{1/(p-q)}
\quad\text{and}\quad
T_\la(\al) = \frac{b-a}{2}.
\end{equation}
In this case every such solution is given by $u=\pm w_{\al,\la}$ where
$$
w_{\al,\la}(x) := \phi_{\al,\la}^{-1}(x-a)\quad\text{ for }a\le x\le \tfrac{a+b}{2},
\qquad
w_{\al,\la}(x) := \phi_{\al,\la}^{-1}(b-x)\quad\text{ for }\tfrac{a+b}{2}\le x\le b.
$$
In particular we have $u'(a)=0$ if and only if $\al=(\tfrac{p\la_-}{q})^{1/(p-q)}$.
\end{prop}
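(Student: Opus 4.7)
\emph{Plan.} The proof rests on the classical first-integral/time-map analysis for autonomous second order ODEs. I will establish the forward direction (``only if'' plus the explicit formula) by exploiting the conservation law arising from $-u''=g_\la(u)$, and the converse direction by verifying directly that the candidate $w_{\al,\la}$ is a classical solution.

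For the forward direction, fix a solution $u$ with nodal interval $[a,b]$, $\|u\|_\infty=\al$, and assume without loss of generality that $u>0$ on $(a,b)$. Proposition~\ref{Prop App A 1} provides a unique critical point $\xi=\frac{a+b}{2}\in(a,b)$ with $u(\xi)=\al$ and $u'(\xi)=0$. Multiplying $-u''=g_\la(u)$ by $u'$ and integrating from $\xi$ to $x$ yields the energy identity
\[
\tfrac12 u'(x)^2 + G_\la(u(x)) = G_\la(\al) \qquad\text{for all }x\in[a,b].
\]
Evaluating at $x=a$ gives $u'(a)^2=2G_\la(\al)$, so in particular $G_\la(\al)\ge 0$. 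A short computation using $G_\la(z)=\tfrac{\la}{q}|z|^q+\tfrac{1}{p}|z|^p$ shows $G_\la(\al)\ge 0$ is equivalent to $\al\ge (p\la_-/q)^{1/(p-q)}$, which gives the claimed constraint on $\al$ as well as the last assertion of the proposition (since $u'(a)=0$ iff $G_\la(\al)=0$ iff equality holds). Proposition~\ref{Prop App A 1} moreover implies that $u$ is strictly increasing on $[a,\xi]$ and strictly decreasing on $[\xi,b]$. On $[a,\xi]$ we thus have $u'(x)=\sqrt{2(G_\la(\al)-G_\la(u(x)))}$, and separation of variables gives $\phi_{\al,\la}(u(x))=x-a$, i.e.\ $u(x)=\phi_{\al,\la}^{-1}(x-a)$. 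The symmetric argument on $[\xi,b]$ yields $u(x)=\phi_{\al,\la}^{-1}(b-x)$, and matching at $\xi$ forces $T_\la(\al)=\phi_{\al,\la}(\al)=\xi-a=\frac{b-a}{2}$. Thus $u=w_{\al,\la}$; the case $u<0$ analogously gives $u=-w_{\al,\la}$.

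For the converse, suppose $\al\ge(p\la_-/q)^{1/(p-q)}$ and $T_\la(\al)=\tfrac{b-a}{2}$. The integrand in the definition of $\phi_{\al,\la}$ is positive on $[0,\al)$, so $\phi_{\al,\la}:[0,\al]\to[0,T_\la(\al)]$ is a continuous strictly increasing bijection, smooth on $[0,\al)$, and $w_{\al,\la}$ is well defined. By construction $w_{\al,\la}$ vanishes at $a,b$, is positive on $(a,b)$, attains its maximum $\al$ at $\xi=\frac{a+b}{2}$, and satisfies $w_{\al,\la}(x)=w_{\al,\la}(a+b-x)$. Differentiating $\phi_{\al,\la}(w_{\al,\la}(x))=x-a$ on $[a,\xi)$ gives $w_{\al,\la}'(x)=\sqrt{2(G_\la(\al)-G_\la(w_{\al,\la}(x)))}$; in particular $w_{\al,\la}'(\xi^-)=0$, so by the reflection symmetry $w_{\al,\la}\in C^1[a,b]$. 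Squaring and differentiating once more yields $w_{\al,\la}''(x)=-g_\la(w_{\al,\la}(x))$ on $[a,\xi)$; this extends to the midpoint by continuity (since $g_\la$ is continuous at $\al>0$), and the reflected formula handles $[\xi,b]$, so $w_{\al,\la}$ is a classical solution of the ODE on the whole of $[a,b]$.

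The main technical subtlety will be the $C^2$-regularity at the midpoint $\xi$ (where $w_{\al,\la}'=0$ but the defining integral has an integrable singularity) and at the boundary points when $\al=(p\la_-/q)^{1/(p-q)}$, where $w_{\al,\la}'(a)=w_{\al,\la}'(b)=0$; the latter is precisely the threshold at which the solution can be extended by zero to produce a dead-core solution, and it must be checked that even in this borderline case $w_{\al,\la}''$ exists and equals $-g_\la(w_{\al,\la})$ one-sidedly at $a$ and $b$. Apart from this endpoint analysis the argument is a standard application of the time-map technique.
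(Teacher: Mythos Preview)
Your proposal is correct and follows essentially the same route as the paper: the energy identity $u'(x)^2=2(G_\la(\al)-G_\la(u(x)))$, separation of variables via $\phi_{\al,\la}$, and the identification $T_\la(\al)=\phi_{\al,\la}(\al)=\tfrac{b-a}{2}$. You are in fact more careful than the paper on the converse direction (which the authors dispatch in a single sentence) and you rightly flag the endpoint regularity issues at $\xi$ and, in the borderline case $\al=(p\la_-/q)^{1/(p-q)}$, at $a$ and $b$.
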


\begin{proof}
Let first $u$ be a solution of \eqref{Gl 1D ABC problem} with nodal interval $[a,b]$ and   $\|u\|_\infty=\al$. We may assume $u>0$ on $(a,b)$ so that Proposition~\ref{Prop App A 1} yields $u'>0$ on $(a,\tfrac{a+b}{2})$ and ${u(\tfrac{a+b}{2})=\al}$. Multiplying \eqref{Gl 1D ABC problem} with $2u'$ and integrating the resulting equation from $x$ to $\tfrac{a+b}{2}$ gives
\begin{equation} \label{Gl 1st order eq}
u'(x)^2 = 2(G_\la(\al)-G_\la(u(x)))\qquad\text{for }  x\in\Big(a,\frac{a+b}{2}\Big).
\end{equation}
Hence, Proposition \ref{Prop App A 1} implies $G_\la(\al)>G_\la(z)$ for all $z\in(0,\al)$ and thus $\al\ge (\frac{p\la_-}{q})^{1/(p-q)}$. Moreover, 
\eqref{Gl 1st order eq} gives
\begin{align*}
\phi_{\al,\la}(u(x))
 = \int_a^x \frac{u'(t)}{\sqrt{2(G_\la(\al)-G_\la(u(t)))}}\,dt
 = \int_a^x 1 \,dt
 = x-a  \qquad\text{for }x\in\Big(a,\frac{a+b}{2}\Big)
\end{align*}
and we obtain the solution formula $u= w_{\al,\la}$. Furthermore, \eqref{Gl Timemap} follows from
\begin{align*}
T_\la(\al)
 = \int_0^\al \frac{1}{\sqrt{2(G_\la(\al)-G_\la(s)})}\ds
 = \phi_{\al,\la}(\al)
 = \phi_{\al,\la}\Big(u\Big(\frac{a+b}{2}\Big)\Big)
 = \frac{b-a}{2}
\end{align*}
and \eqref{Gl 1st order eq} implies that we have $u'(a)=0$ if and only if $G_\la(\al)=0$  which is equivalent to $\al=(\tfrac{p\la_-}{q})^{1/(p-q)}$. Vice versa, if $\al$ is a solution of \eqref{Gl Timemap} then $\pm w_{\al,\la}$ is a solution of \eqref{Gl 1D ABC problem} with maximum norm $\al$ and nodal interval $[a,b]$. This finishes the proof.
\end{proof}

The next step is to investigate how these solutions can be patched together in order to find solutions of \eqref{Gl 1D ABC problem} by solving the initial value problems at the boundary of each nodal interval. Looking for solutions with precisely $j+1$ nodal intervals the following threshold value plays a significant role:
\begin{equation} \label{Gl defn lambdajunten*}
\la_{j*}
 := - \Big( \sqrt{2q} \big(\frac{p}{q}\big)^{(2-q)/2(p-q)}
    \int_0^1 \frac{1}{\sqrt{t^q-t^p}}\,dt \Big)^{2(p-q)/(p-2)}
    	\cdot (j+1)^{2(p-q)/(p-2)}.
\end{equation}
In the next Proposition we show that for $\la>\la_{j*}$ solutions with $j+1$ nodal intervals have precisely $j$ interior nodes located at $\tfrac{1}{j+1},\ldots,\tfrac{j}{j+1}$ and that each solution is pointwise symmetric with respect to all of its nodes. When $\la$ tends to $\la_{j*}$ from the right the slopes at the zeros tend to 0 and dead core solutions appear for $\la<\la_{j*}$. We show that the set of all dead core solutions of \eqref{Gl 1D ABC problem} for a given $\la<\la_{j*}$  can be described by $j+1$ discrete parameters $\sigma_0,\ldots,\sigma_j\in\{-1,+1\}$ and $j+1$ continuous parameters $a_0,\ldots,a_j$ belonging to
$$
\cZ_{j,\la}
 := \{ a\in [0,1]^{j+1}:
       0\le a_0\le a_0+l(\la)\le a_1\le a_1+l(\la) \le \ldots \le a_j\le
        a_j+l(\la)\le 1\}.
$$
where $l(\la)$ is the length of the nodal interval of an arbitrary dead core solution given by
\begin{equation} \label{Def lj}
l(\la)
 := 2T_\la\Big( \Big(\frac{p\la_-}{q}\Big)^{1/(p-q)}\Big)
 = \frac{1}{j+1} \Big(\frac{|\la_{j*}|}{|\la|}\Big)^{(p-2)/2(p-q)}.
\end{equation}
For a verification of the latter equality one uses Proposition \ref{Prop properties time map} (v) (to be proved later). The following Proposition proves that the nontrivial solutions of \eqref{Gl 1D ABC problem} with  precisely $j+1$ nodal intervals in $[0,1]$ look like the functions in Figure \ref{Fig normal and deadcore solutions}.

\medskip

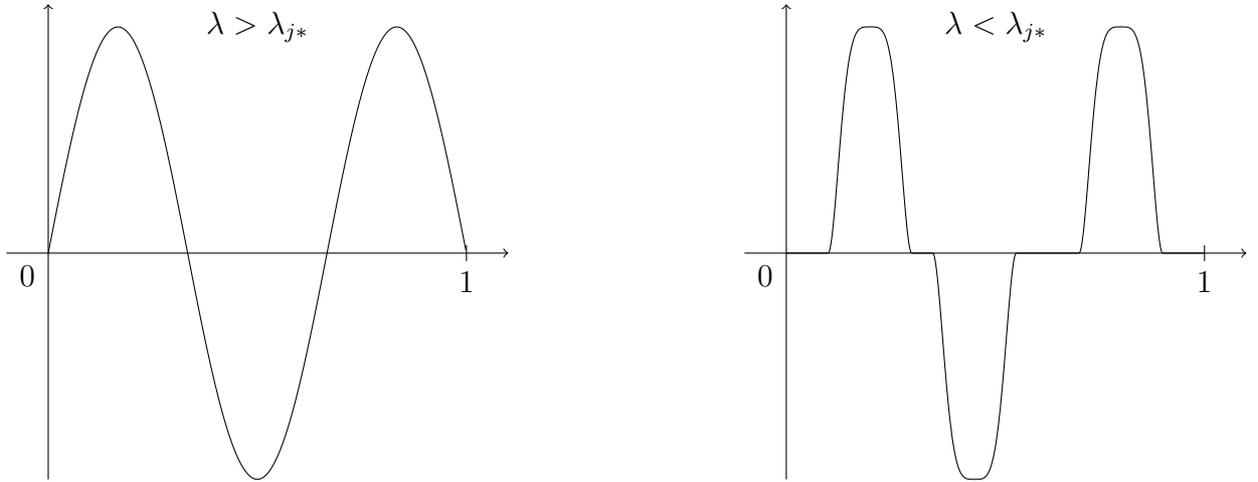
\begin{figure}[!htp]
\centering
\subfigure
  {
  \begin{tikzpicture}[domain=0:1, xscale=5.5,yscale=3,samples=100]
      \draw[->] (-0.1,0)  -- (1.1,0);
      \draw[->] (0,-1) -- (0,1.1);
      \node at (-0.05,-0.1) {$0$};
      \node at (0.5,1) {$\lambda>\lambda_{j*}$};
      \draw (1,1pt) -- (1,-1pt)  node[right,below]{$1$};
      \draw[smooth] plot (\x,{cos((3*pi*\x - pi/2) r)});
      \label{pic 2a}
  \end{tikzpicture}
  }
  \hfill
  \subfigure
  {
  \begin{tikzpicture}[samples=100, xscale=5.5,yscale=3]
      \draw[->] (-0.1,0)  -- (1.1,0);
      \draw[->] (0,-1) -- (0,1.1);
      \node at (-0.05,-0.1) {$0$};
      \node at (0.5,1) {$\lambda<\lambda_{j*}$};
      \draw (1,1pt) -- (1,-1pt) node[right,below]{$1$} ;
      \draw[smooth,domain=0:0.1] plot (\x,0);
      \draw[smooth,domain=0.1:0.3] plot (\x,{cos((50*pi*(\x-0.2)^2) r)^2});
      \draw[smooth,domain=0.3:0.35] plot (\x,0);
      \draw[smooth,domain=0.35:0.55] plot (\x,{-1*(cos((50*pi*(\x-0.45)^2) r))^2});
      \draw[smooth,domain=0.55:0.7] plot (\x,0);
      \draw[smooth,domain=0.7:0.9] plot (\x,{cos((50*pi*(\x-0.8)^2) r)^2});%
      \draw[smooth,domain=0.9:1] plot (\x,0);
      \label{pic 2b}
  \end{tikzpicture}
  }
\caption{Qualitative plots of the solutions}
 \label{Fig normal and deadcore solutions}
\end{figure}

\medskip

\begin{prop}\label{Prop App A 3}
Let $j\in\N_0,\la\in\R$ and let $u$ be a solution of \eqref{Gl 1D ABC problem} with
precisely $j+1$ nodal intervals in $[0,1]$, set $\al:=\|u\|_\infty$. Then the following holds:
\begin{itemize}
\item[(i)] In case $\la>\la_{j*}$ we have $T_\la(\al)=\tfrac{1}{2j+2}$ and
$$
u(x) = \pm (-1)^k w_{\al,\la}\Big(x-\frac{k}{j+1}\Big)
 \qquad\text{for all }x\in\Big[\frac{k}{j+1},\frac{k+1}{j+1}\Big]
 \text{ and } k=0,\ldots,j.
$$
\item[(ii)] In case $\la=\la_{j*}$ we have $\al=(\tfrac{p|\la_{j*}|}{q})^{1/(p-q)}$
and there are $\sigma_0,\ldots,\sigma_j\in\{-1,+1\}$ such that
$$
u(x) = \sigma_k w_{\al,\la}\Big(x-\frac{k}{j+1}\Big)
 \qquad\text{for all }x\in\Big[\frac{k}{j+1},\frac{k+1}{j+1}\Big]
 \text{ and } k=0,\ldots,j.
$$
\item[(iii)] In case $\la<\la_{j*}$ we have $\al = (\tfrac{p|\la|}{q})^{1/(p-q)}$ and
there are $\sigma_0,\ldots,\sigma_j\in\{-1,+1\}$ and $(a_0,\ldots,a_j)\in\cZ_{j,\la}$ such that
$$
u\equiv 0 \quad\text{on }
  [0,a_0]\cup[a_0+l(\la),a_1]\cup \ldots\cup[a_{j-1}+l(\la),a_j]\cup[a_j+l(\la),1]
$$
and
$$
u(x) = \sigma_k w_{\al,\la}(x-a_k) \qquad
        \text{for }x\in    [a_k,a_k+l(\la)] \text{ and all }  k=0,\ldots,j.
$$
\end{itemize}
\end{prop}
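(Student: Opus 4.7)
The plan is to decompose a solution $u$ into its restrictions to the nodal intervals and then patch them together via $\cC^1$-matching. Label the nodal intervals in order as $I_0,\ldots,I_j$ with $I_k=[c_k,d_k]$. By Proposition~\ref{Prop App A 2}, on each $I_k$ there exist $\sigma_k\in\{-1,+1\}$ and $\al_k:=\|u\|_{L^\infty(I_k)}\ge(p\la_-/q)^{1/(p-q)}$ such that $u|_{I_k}=\sigma_k w_{\al_k,\la}(\cdot-c_k)$ and $T_\la(\al_k)=(d_k-c_k)/2$. Evaluating the first-order identity \eqref{Gl 1st order eq} at the endpoints yields $|u'(c_k)|=|u'(d_k)|=\sqrt{2G_\la(\al_k)}$, which vanishes precisely when $\al_k=(p\la_-/q)^{1/(p-q)}$.

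Next I would match between consecutive intervals. If $d_k=c_{k+1}$, $\cC^1$-continuity of $u$ forces $G_\la(\al_k)=G_\la(\al_{k+1})$; since $G_\la'(z)=\la z^{q-1}+z^{p-1}>0$ on $[(p\la_-/q)^{1/(p-q)},\infty)$ for every $\la\in\R$, we deduce $\al_k=\al_{k+1}$. If $d_k<c_{k+1}$, by the definition of nodal interval $u\equiv0$ on $[d_k,c_{k+1}]$, so $u'(d_k)=u'(c_{k+1})=0$ and both $\al_k,\al_{k+1}$ equal $(p\la_-/q)^{1/(p-q)}$. Propagating these identities shows all $\al_k$ coincide with a common value $\al$, and that either (A) $\al>(p\la_-/q)^{1/(p-q)}$, with no dead cores and $|u'|\ne0$ at every interface, forcing the signs to alternate $\sigma_{k+1}=-\sigma_k$; or (B) $\al=(p\la_-/q)^{1/(p-q)}$ (nontrivial only for $\la<0$), each $|I_k|=l(\la)$ by \eqref{Def lj}, and the signs $\sigma_k$ are unconstrained because interface derivatives vanish.

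To sort into (i)--(iii), I would combine these structural conclusions with the monotonicity of $T_\la$ for $\la\le0$. The substitution $s=\al t$ in \eqref{Gl def timemap} gives
\[
T_\la(\al)=\int_0^1\frac{dt}{\sqrt{\tfrac{2\la}{q}\al^{q-2}(1-t^q)+\tfrac{2}{p}\al^{p-2}(1-t^p)}},
\]
and since both $-\tfrac{2|\la|}{q}\al^{q-2}$ and $\tfrac{2}{p}\al^{p-2}$ are strictly increasing in $\al$, $T_\la$ is strictly decreasing from $T_\la((p|\la|/q)^{1/(p-q)})=l(\la)/2$ to $0$. From the closed form \eqref{Def lj} one reads off that $l(\la)\gtrless\tfrac{1}{j+1}$ is equivalent to $\la\gtrless\la_{j*}$. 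In regime (A) the nodal intervals must cover $[0,1]$, hence $(j+1)\cdot 2T_\la(\al)=1$, i.e.\ $T_\la(\al)=\tfrac{1}{2j+2}$, which admits a solution with $\al>(p\la_-/q)^{1/(p-q)}$ iff $\la>\la_{j*}$ (for $\la>0$ this is automatic). In regime (B) one needs $(j+1)l(\la)\le1$, i.e.\ $\la\le\la_{j*}$. Thus $\la>\la_{j*}$ leaves only regime (A), giving (i); $\la=\la_{j*}$ collapses both regimes at $\al=(p|\la_{j*}|/q)^{1/(p-q)}$ with $|I_k|=1/(j+1)$ but free signs, giving (ii); and $\la<\la_{j*}$ leaves only regime (B), with the spacing of the nodal intervals parametrized exactly by $(a_0,\ldots,a_j)\in\cZ_{j,\la}$, giving (iii).

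The principal delicate point is the dead-core matching argument in regime (B): once a single $\al_k$ reaches the critical value, strict monotonicity of $G_\la$ on the admissible branch combined with $\cC^1$-continuity propagates that value to all neighbouring intervals, and at dead-core gaps it is already forced. A second technical ingredient is the monotonicity of $T_\la$ for $\la\le0$, which is immediate from the substitution above and delivers the trichotomy via the threshold $\la_{j*}$.
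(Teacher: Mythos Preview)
Your argument is correct and follows essentially the same route as the paper: apply Proposition~\ref{Prop App A 2} on each nodal interval, match at the interfaces, and read off the trichotomy from the threshold $\la_{j*}$ via the monotonicity of $T_\la$ for $\la\le 0$. The only organizational difference is that the paper splits on whether $u'(0)=0$ or not and then builds the solution sequentially from left to right (using $u'(l)=-u'(0)$ from symmetry together with IVP uniqueness away from $u=0$), whereas you work globally and match the amplitudes via the injectivity of $G_\la$ on $[(p\la_-/q)^{1/(p-q)},\infty)$; both devices encode the same information. One small point you should make explicit: the dead-core reasoning you give for interior gaps $d_k<c_{k+1}$ applies verbatim to boundary gaps $c_0>0$ or $d_j<1$ (forcing $\al_0$ resp.\ $\al_j$ to the critical value), and this is what justifies your assertion that in regime~(A) the nodal intervals must tile $[0,1]$.
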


\begin{proof}
The proof of this result is accomplished in the following way. Given a solution $u$ of
\eqref{Gl 1D ABC problem} with precisely $j+1$ nodal intervals we show that
\begin{itemize}
  \item[(a)] $u'(0)\ne 0$ implies $\la>\la_{j*}$ and $u$ is given by the formula from (i) and
  \item[(b)] $u'(0)=0$ implies $\la\leq \la_{j*}$ and $u$ is given by the formulas from (ii) or (iii) according to $\la=\la_{j*}$ or $\la<\la_{j*}$.
\end{itemize}

In case $u'(0)\ne 0$ let $l>0$ denote the first positive zero of $u$. Then Proposition \ref{Prop App A 2} gives $u=\pm w_{\al,\la}$ on $[0,l]$ and $T_\la(\al) = \frac{l}{2}$ for some $\al>(\tfrac{p\la_-}{q})^{1/(p-q)}$. Using the symmetry of $w_{\al,\la}$ we get $u'(l)=-u'(0)$ and thus $u=\mp w_{\al,\la}(\cdot-l)$ on $[l,2l]$ again by 
Proposition~\ref{Prop App A 2}. Inductively we obtain 
$u=\pm(-1)^k w_{\al,\la}(\cdot-kl)$ on $[kl,(k+1)l]$ for all $k\in\N_0$. Since $u$ satisfies $u(1)=0$ and has precisely $j+1$ nodal intervals we obtain $l=\frac{1}{j+1}$ and thus $T_\la(\al)=\frac{l}{2}=\frac{1}{2j+2}$. Given that this equation has a solution $\al>(\frac{p\la_-}{q})^{1/(p-q)}$ we infer  $\la>\la_{j*}$ from
Proposition~\ref{Prop properties time map}~(v).

In case $u'(0)=0$ the parameter $\la$ must be negative by Proposition \ref{Prop 2}. Either the function $u$ vanishes identically on some right-sided neighbourhood of $0$ or $|u|$ is positive on a right-sided neighbourhood of $0$. Indeed, if there is a sequence $(x_n)$ converging to $0$ with $u(x_n)=0$ then continuity of $u$ implies $g_\la(u(t))u(t)<0$ for all $t\in [0,x_n]$ for sufficiently large $n$ and thus
\begin{align*}
0 = u'(x_n)u(x_n)
  = \int_0^{x_n} u'(t)^2 +   u''(t)u(t)\dt
  = \int_0^{x_n} u'(t)^2  - g_\la(u(t))u(t) \dt
  \ge \int_0^{x_n} u'(t)^2 \dt
\end{align*}
which implies $u\equiv 0$ on $[0,x_n]$. Here we used $g_\la(z)=\la|z|^{q-2}z+|z|^{p-2}z$ and that $\la$ is negative. Therefore, defining
$a_0:= \max\{ x\in [0,1]: u(t)=0    \text{ for all }t\in [0,x]\}$
we obtain $u'(a_0)=0$ and that $|u|$ is positive on some right-sided neighbourhood of $a_0$. Proposition
\ref{Prop App A 2} then implies $u=\pm w_{\al,\la}(\cdot-a_0)$ on
$[a_0,a_0+l(\la)]$ for $\al=(\tfrac{p\la-}{q})^{1/(p-q)}$, in particular $u(a_0+l(\la))=u'(a_0+l(\la))=0$.
Defining $a_1:= \max\{ x\in [a_0+l(\la),1]: u(t)=0 \text{ for all }t\in [a_0+l(\la),x]\}$ we obtain that $u$ vanishes on $[a_0+l(\la),a_1]$ and, again using
Proposition~\ref{Prop App A 2}, $u=\pm w_{\al,\la}(\cdot-a_1)$ on $[a_1,a_1+l(\la)]$. Repeating this process and using $u(1)=0$ and that $u$ has precisely $j+1$ nodal intervals we obtain $(j+1)l(\la)\le 1$ and thus $\la\le\la_{j*}$, see \eqref{Def lj}. From this we obtain claim (ii) and (iii) since $\la=\la_{j*}$ and \eqref{Def lj} imply $(j+1)l(\la)=1$ which in turn implies $a_k=a_{k-1}+l(\la)=a_{k-1}+\tfrac{1}{j+1}$ for
all $k=1,\ldots,j$ and thus $a_k=\frac{k}{j+1}$ for all $k=0,\ldots,j$.
\end{proof}

The above Proposition reduces the problem of finding all solutions of the boundary value problem \eqref{Gl 1D ABC problem} to the task of solving the scalar equation $T_\la(\al)=\frac{1}{2j+2}$ for $\la>\la_{j*}$. Hence, the solution theory for \eqref{Gl 1D ABC problem} depends on the properties of $T_\la$ which we list in the following Proposition. Its proof will be given in Appendix B.

\begin{prop} \label{Prop properties time map}
\begin{itemize}
\item[(i)] For all $\la\ge 0$ and all $\al>0$ the following estimates hold true:
\begin{align*}
T_\la(\al)
 &\le \Big( \frac{pq\al^{2-q}}{2\la p + 2q\al^{p-q}} \Big)^{1/2}
       \int_0^1 \frac{1}{\sqrt{1-s^q}}\ds, \\
T_\la(\al)
 &\ge \Big( \frac{pq\al^{2-q}}{2\la p + 2q\al^{p-q}} \Big)^{1/2} \int_0^1
        \frac{1}{\sqrt{1-s^p}}\ds.
 \end{align*}
\item[(ii)] For all $\la>0$ there is a uniquely determined number $\al_\la>0$ such
that $T_\la$ is strictly increasing on $(0,\al_\la]$ and strictly decreasing on $[\al_\la,\infty)$. Moreover, we have
$$
\lim_{\al\to 0^+} T_\la(\al) = \lim_{\al\to \infty} T_\la(\al) = 0.
$$
\item[(iii)] There are positive numbers $c_3,C_3$ such that for all $\la>0$ we have
$$
c_3 \la^{1/(p-q)} < \al_\la < C_3 \la^{1/(p-q)},\qquad
c_3 \la^{(2-p)/2(p-q)} <T_\la(\al_\la)< C_3  \la^{(2-p)/2(p-q)}.
$$
\item[(iv)] The map $\la\mapsto T_\la(\al_\la)$ is decreasing on $\R_{>0}$ and there are uniquely determined positive numbers $\Lambda_0<\Lambda_1<\Lambda_2<\ldots\to\infty$ with $T_{\Lambda_j}(\al_{\Lambda_j}) = \frac{1}{2j+2}$ for all $j\in\N_0$. There are positive numbers $c_4,C_4$ such that the following estimates hold for all $j\in\N_0$:
\begin{align*}
c_4 (j+1)^{2(p-q)//p-2)} \le \Lambda_j \le C_4 (j+1)^{2(p-q)/(p-2)}.
\end{align*}
\item[(v)] For all $\la<0$ the function $T_\la$ is well-defined and decreasing on $[(\tfrac{p|\la|}{q})^{1/(p-q)},\infty)$. Moreover, we have
$$
T_\la\Big( \Big(\frac{p|\la|}{q}\Big)^{1/(p-q)}  \Big)
 = \frac{1}{2j+2} \Big(\frac{|\la_{j*}|}{|\la|}\Big)^{(p-2)/2(p-q)}, \qquad
    \lim_{\al\to \infty} T_\la(\al) = 0.
$$
\end{itemize}
\end{prop}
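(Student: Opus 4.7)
The plan is to reduce everything to a single one-parameter object. The substitution $s = \al t$ produces
$$
G_\la(\al) - G_\la(\al t) = \al^p\Big[\tfrac{\mu}{q}(1-t^q) + \tfrac{1}{p}(1-t^p)\Big], \qquad \mu := \la\al^{q-p},
$$
and hence the scaling identity
$$
T_\la(\al) = \al^{(2-p)/2}\Psi(\mu), \qquad \Psi(\mu) := \int_0^1 \frac{dt}{\sqrt{2\bigl[\tfrac{\mu}{q}(1-t^q) + \tfrac{1}{p}(1-t^p)\bigr]}},
$$
valid whenever $\mu > -q/p$. The function $\Psi$ is smooth, strictly positive, strictly decreasing, with finite value $\Psi(0) = \sqrt{p/2}\int_0^1(1-t^p)^{-1/2}dt$ and $\Psi(\mu) \sim \sqrt{q/(2\mu)}\int_0^1(1-t^q)^{-1/2}dt$ as $\mu\to\infty$. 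Almost every assertion will reduce to properties of $\Psi$.

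For (i), I use $t^p \leq t^q$ on $(0,1)$ to bound $(1-t^p)$ by $(1-t^q)$ above or below inside the square root; each bound factors out $\sqrt{pq\al^{2-q}/(2\la p + 2q\al^{p-q})}$ and leaves the integrals $\int_0^1(1-t^q)^{-1/2}dt$ and $\int_0^1(1-t^p)^{-1/2}dt$ respectively. The $\al \to 0^+$ and $\al\to\infty$ limits in (ii) are immediate consequences. For the unique maximum, I differentiate the scaling identity: $T_\la'(\al) = 0$ is equivalent to $f(\mu) := (p-q)(-\mu\Psi'(\mu)) - \tfrac{p-2}{2}\Psi(\mu) = 0$. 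A direct asymptotic analysis using $-\mu\Psi'(\mu)/\Psi(\mu) \to 1/2$ as $\mu\to\infty$ gives $f(0^+) = -\tfrac{p-2}{2}\Psi(0) < 0$ and $f(\mu) \sim \tfrac{2-q}{2}\Psi(\mu) > 0$ at infinity, so at least one zero exists. The main obstacle is uniqueness of this zero, which I intend to establish through the Opial-type identity
$$
T_\la'(\al) = \int_0^1 \frac{H(\al t) - H(\al)}{(2(G_\la(\al)-G_\la(\al t)))^{3/2}}\,dt, \qquad H(x) := xg_\la(x) - 2G_\la(x) = \tfrac{q-2}{q}\la x^q + \tfrac{p-2}{p}x^p,
$$
combined with the fact that for $\la > 0$ the function $H$ has exactly one critical point, a global minimum at $x_* = (\la(2-q)/(p-2))^{1/(p-q)}$; a sign-bookkeeping of the integrand on $t \in (0, x_*/\al)$ and $t \in (x_*/\al, 1)$ then excludes a second critical point of $T_\la$.

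Claims (iii) and (iv) are essentially consequences of the scaling identity. Once $\mu_*$ is identified as the unique positive root of $f$, both $\al_\la = (\la/\mu_*)^{1/(p-q)}$ and $T_\la(\al_\la) = \Psi(\mu_*)(\la/\mu_*)^{(2-p)/(2(p-q))}$ depend on $\la$ only through explicit powers, yielding the two-sided bounds of (iii) with constants $c_3, C_3$ depending only on $p, q$. For (iv), differentiating the defining integral in $\la$ shows $\partial_\la T_\la(\al) < 0$ for every $\al, \la > 0$, whence the envelope theorem gives $\frac{d}{d\la}T_\la(\al_\la) = \partial_\la T_\la(\al_\la) < 0$. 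Together with the limits $T_\la(\al_\la) \to \infty$ as $\la\to 0^+$ and $T_\la(\al_\la)\to 0$ as $\la\to\infty$ coming from (iii), this yields a unique $\La_j$ with $T_{\La_j}(\al_{\La_j}) = 1/(2j+2)$, and the asserted scaling $\La_j \asymp (j+1)^{2(p-q)/(p-2)}$ follows by inverting the estimates in (iii).

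For (v), the scaling identity remains valid for $\la < 0$ precisely on the range $\al \geq (p|\la|/q)^{1/(p-q)}$, corresponding to $\mu \in [-q/p, 0)$. Monotonicity is then immediate: as $\al$ grows, $\al^{(2-p)/2}$ strictly decreases and $\mu$ strictly increases toward $0$, so $\Psi(\mu)$ also decreases, making the product strictly decreasing. The limit at infinity follows from $\al^{(2-p)/2}\to 0$ and $\Psi(\mu)\to\Psi(0) < \infty$. Finally, direct evaluation gives $\Psi(-q/p) = \sqrt{p/2}\int_0^1(t^q - t^p)^{-1/2}dt$; substituting $\al = (p|\la|/q)^{1/(p-q)}$ and comparing with the definition of $\la_{j*}$ (which was designed exactly so that the exponents of $p/q$ and the integral factor $\int_0^1(t^q-t^p)^{-1/2}dt$ match) reproduces the closed-form $T_\la(\al) = \frac{1}{2(j+1)}(|\la_{j*}|/|\la|)^{(p-2)/(2(p-q))}$.
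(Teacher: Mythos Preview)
Your scaling identity $T_\la(\al)=\al^{(2-p)/2}\Psi(\mu)$ with $\mu=\la\al^{q-p}$ is a genuinely nicer organizing principle than the paper's direct computations: once the unique zero $\mu_*$ of $f$ is known, parts (iii) and (iv) become exact formulas ($\al_\la=(\la/\mu_*)^{1/(p-q)}$, $T_\la(\al_\la)=\Psi(\mu_*)\mu_*^{(p-2)/2(p-q)}\la^{(2-p)/2(p-q)}$), so the two-sided bounds and the monotonicity in $\la$ are immediate without any envelope argument. Parts (i) and (v) are essentially the same in both approaches.

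The gap is in (ii), the uniqueness of the critical point. You reduce it correctly to showing that $f(\mu)=(p-q)(-\mu\Psi'(\mu))-\tfrac{p-2}{2}\Psi(\mu)$ has a single positive zero, then abandon this and switch to the Opial-type representation of $T_\la'$ together with the observation that $H$ has a unique minimum at $x_*$. But knowing the shape of $H$ does not by itself pin down the number of zeros of $\al\mapsto\int_0^1[H(\al t)-H(\al)](2(G_\la(\al)-G_\la(\al t)))^{-3/2}dt$; the promised ``sign-bookkeeping on $(0,x_*/\al)$ and $(x_*/\al,1)$'' is not an argument, and I do not see how to complete it without an additional monotonicity or convexity ingredient. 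The paper closes this by an explicit second-derivative computation: from the integral formula for $T_\la'$ one first extracts, at any critical point, the two-sided bound $\tfrac{q(p-2)}{p(2-q)}\le \la^{-1}\al^{p-q}\le \tfrac{p-2}{2-q}$, and then differentiates once more and uses this bound to show that the integrand in $\al^{q/2}T_\la''(\al)$ is pointwise negative (the key step reduces to checking that $s\mapsto (2p-3q+2)s^q-(2-q)s^p$ is increasing on $[0,1]$ with value $2(p-q)$ at $s=1$). This gives $T_\la''(\al)<0$ whenever $T_\la'(\al)=0$, hence a unique maximum. You should either carry out an analogous computation for your $f$ (show $f'(\mu_*)>0$ at any zero, which amounts to the same estimate), or simply import the paper's local-concavity argument at this one point; the rest of your scaling framework then goes through cleanly.
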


Using the properties of $T_\la$ described in the parts (ii) and (iv) of the previous
Proposition we obtain complete information about the solutions of the equation $T_\la(\al) = \frac{1}{2j+2}$. The above result are illustrated in the pictures 
\ref{pic Tlambda lambdageq0} and \ref{pic Tlambda lambda<0}.

\medskip

\begin{figure}[!htp]
 \centering
%   \subfigure
%    {
%     \begin{tikzpicture}[domain=0.01:3.5, xscale=1.7, yscale=3, samples=100]
%       \draw[->] (0,0) -- (3.7,0) node[right]{$\al$};
%       \draw[->] (0,-0.1) -- (0,2.5);
%       \draw[smooth] plot (\x,{(\x^0.5/(50+\x^4))^0.5}); %% lambda=50
%       \draw[smooth] plot (\x,{(\x^0.5/(10+\x^4))^0.5}); %% lambda=10
%       \draw[smooth] plot (\x,{(\x^0.5/(1+\x^4))^0.5}); %% lambda=1
%       \draw[smooth] plot (\x,{(\x^0.5/(0.2+\x^4))^0.5}); %% lambda=0.2
%       \draw[smooth] plot (\x,{(\x^0.5/(0.1+\x^4))^0.5}); %% lambda=0.1
%       \draw[dashed] (0.63,0) node[below] {$\al_1$} -- (0.63,0.845)  node[above]{$T_1(\al)$\;};
%       \draw[dashed] (1.06,0) node[below] {$\al_{10}$} -- (1.06,0.31) node[above]{$T_{10}(\al)$\,};
%       \draw[dashed] (2.1,0) node[below] {$\al_{50}$} -- (2.1,0.15);
%       \node at (1.1,2) {$T_{0.1}(\al)$};
%       \label{pic a}
%     \end{tikzpicture}
%    }
%   \hfill
%  \subfigure
%    {~\hspace{-2cm}\vspace{6cm}
    \begin{tikzpicture}[xscale=1.8, yscale=2, samples=100]
      \draw[->] (0,0) -- (7,0) node[right]{$\al$};
      \draw[->] (0,-0.1) -- (0,3.8);
      \draw[smooth,domain=0.2^0.4:6.7] plot (\x,{(0.2^0.4/(1-0.2^0.4+\x))}); %% lambda= -0.2
      \draw[smooth,domain=1:6.7] plot (\x,{(1^0.4/(1-1^0.4+\x))}); %% lambda= -1
      \draw[smooth,domain=5^0.4:6.7] plot (\x,{(5^0.4/(1-5^0.4+\x))}); %% lambda= -5
      \draw[smooth,domain=10^0.4:6.7] plot (\x,{(10^0.4/(1-10^0.4+\x))}); %% lambda= -10
      \draw[smooth,domain=20^0.4:6.7] plot (\x,{(20^0.4/(1-20^0.4+\x))}); %% lambda= -20
      %\draw[dashed] (1^0.4,0) -- (1^0.4,1^0.4);
      \draw[dashed] (5^0.4,0) node[below] {$(\frac{5p}{q})^{1/(p-q)}$} -- (5^0.4,5^0.4);
      %\draw[dashed] (10^0.4,0)  -- (10^0.4,10^0.4);
      \draw[dashed] (20^0.4,0)  node[below] {$(\frac{20p}{q})^{1/(p-q)}$}  -- (20^0.4,20^0.4);
      \node at (4.5,3) {$T_{-20}(\al)$};
      \node at (1.7,2.1) {$T_{-5}(\al)$};
      \label{pic b}
    \end{tikzpicture}
   %}
 \caption{Qualitative plots of $T_\la$ for $q=1.5,p=4$ and %$\la\in\{0.1,0.2,1,10,50\}$ in Fig.~\ref{pic a}
 $\la\in\{-0.2,-1,-5,-10,-20\}$.% in  Fig.~\ref{pic b}
 }
 \label{pic Tlambda lambda<0}
\end{figure}

\medskip

For all $\la\in (0,\La_j)$ the equation has exactly two different solutions $\un{\al}_j(\la)\in (0,\al_\la)$ and $\ov{\al}_j(\la)\in (\al_\la,\infty)$ giving rise to exactly four different solutions $\un{u}_j(\la),-\un{u}_j(\la),\ov{u}_j(\la),-\ov{u}_j(\la)$ with $j$ interior nodes and they are given by the formulas from Proposition \ref{Prop App A 3} (i) for $\al=\un{\al}_j(\la)$ respectively $\al=\ov{\al}_j(\la)$. As $\la$ tends to $\La_j$ from the left the solutions $\un{u}_j(\la),\ov{u}_j(\la)$ merge into each other as both values $\un{\al}_j(\la),\ov{\al}_j(\la)$ converge to $\al_{\La_j}$. In case $\la>\La_j$ Proposition \ref{Prop properties time map} (iv) implies $T_\la(\al)<\tfrac{1}{2j+2}$ for all $\al>0$ so that no solutions with $j+1$ nodal intervals exist according to Proposition \ref{Prop App A 3} (i). As $\la$ tends to 0 from the right we observe $\un{\al}_j(\la)\to 0$ so that $\un{u}_j(\la)$ converges to the trivial solution while $\ov{u}_j(\la)$ converges to the uniquely determined nontrivial solution of $-u''=|u|^{p-2}u,u(0)=u(1)=0,u'(0)>0$ with $j$ interior nodes. The solutions $\ov{u}_j(\la),-\ov{u}_j(\la)$ persist in the range  $\la_{j*}\le \la\le 0$ if now, for $\la\le 0$, the value $\ov{\al}_j(\la)\ge(\tfrac{p|\la|}{q})^{1/(p-q)}$ denotes the unique solution of $T_\la(\al)=\frac{1}{2j+2}$, see Proposition \ref{Prop properties time map} (v). As $\la$ tends to $\la_{j*}$ from the right we observe $\ov{\al}_j(\la)\to(\tfrac{p|\la_{j*}|}{q})^{1/(p-q)}$ and that the slopes at the zeros $0,\tfrac{1}{j+1},\ldots,\tfrac{j}{j+1},1$ tend to 0 so that there is a continuous transition to the dead core solutions described in Proposition~\ref{Prop App A 3}~(ii). As a consequence the solution continua $\cC_j^\pm$ from Theorem~\ref{Thm 1} in the special case $n=1$ and $h_\la(r,z,\xi)=g_\la(z)$ are given by the following theorem.

\begin{thm}\label{Thm 1D case}
 Let $j\in\N_0$. Then all nontrivial solutions of \eqref{Gl 1D ABC problem} with $j+1$ nodal intervals $I_0,\ldots,I_j$ and sign $\pm (-1)^k$ on $I_k$ are given by
 $\cC_j^+ = \cC_{j,1}^+ \cup \cC_{j,2}^+$ and
 $\cC_j^- = \cC_{j,1}^- \cup \cC_{j,2}^-$ where
 \begin{align*}
  \cC_{j,1}^\pm
   &= \{ (\pm\un{u}_j(\la),\la):0\le \la\le \La_j\}
      \cup \{ (\pm\ov{u}_j(\la),\la): \la_{j*}<\la<\La_j\}, \\
  \cC_{j,2}^\pm
   &= \{ (\pm u_j(\la,a,\sigma),\la): \la<\la_{j*},a\in
      \cZ_{j,\la},\sigma=(1,-1,1,\ldots,(-1)^{j+1})\}
 \end{align*}
 and $u_j(\la,a,\sigma)$ denotes the dead core solution given by 
 Proposition~\ref{Prop App A 3} (iii).
\end{thm}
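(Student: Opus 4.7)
The plan is to assemble Theorem~\ref{Thm 1D case} directly from the classification provided by Proposition~\ref{Prop App A 3}, using Proposition~\ref{Prop properties time map} to count the solutions of the scalar equation $T_\lambda(\alpha)=\tfrac{1}{2j+2}$ in each of the three parameter regimes $\lambda>\lambda_{j*}$, $\lambda=\lambda_{j*}$, and $\lambda<\lambda_{j*}$. The overall strategy is: for each $\lambda$, list every $\alpha>0$ satisfying the time-map equation (or the dead-core condition), translate these via the explicit formulas in Proposition~\ref{Prop App A 3} into solutions with $j+1$ nodal intervals, and finally impose the sign constraint $\sgn(u|_{I_k})=\pm(-1)^k$ to pin down the discrete parameters.

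First I would handle $\lambda>\lambda_{j*}$. By Proposition~\ref{Prop App A 3}(i) every such solution is of the form $\pm(-1)^k w_{\alpha,\lambda}(\cdot-k/(j+1))$ on $[k/(j+1),(k+1)/(j+1)]$ where $\alpha$ solves $T_\lambda(\alpha)=\tfrac{1}{2j+2}$. For $\lambda>0$, Proposition~\ref{Prop properties time map}(ii),(iv) gives that $T_\lambda$ is unimodal with peak $T_\lambda(\alpha_\lambda)$, and this peak equals $\tfrac{1}{2j+2}$ exactly at $\lambda=\Lambda_j$; consequently the equation has two roots $\un{\alpha}_j(\lambda)<\alpha_\lambda<\ov{\alpha}_j(\lambda)$ for $0<\lambda<\Lambda_j$, a single double root at $\lambda=\Lambda_j$, and none beyond. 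This produces both $\un u_j$ and $\ov u_j$ on $(0,\Lambda_j]$. At $\lambda=0$, Remark~\ref{Bem T0} shows $T_0$ is strictly decreasing, so only $\ov u_j(0)$ is a genuine solution and $\un u_j(0)=0$ is included as a limit. For $\lambda_{j*}<\lambda<0$, Proposition~\ref{Prop properties time map}(v) shows $T_\lambda$ is strictly decreasing on $[(p|\lambda|/q)^{1/(p-q)},\infty)$ with boundary value $\tfrac{1}{2j+2}(|\lambda_{j*}|/|\lambda|)^{(p-2)/2(p-q)}>\tfrac{1}{2j+2}$ and limit $0$ at $\infty$, giving a unique root $\ov{\alpha}_j(\lambda)$ strictly above $(p|\lambda|/q)^{1/(p-q)}$; by the last statement of Proposition~\ref{Prop App A 2} the derivative at endpoints is nonzero, so Proposition~\ref{Prop App A 3}(i) (whose hypothesis is $\lambda>\lambda_{j*}$, not $\lambda>0$) applies, yielding the extension of $\ov u_j$ into $\lambda_{j*}<\lambda\le 0$.

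Next I would treat $\lambda<\lambda_{j*}$ directly from Proposition~\ref{Prop App A 3}(iii): every solution has $\alpha=(p|\lambda|/q)^{1/(p-q)}$ and is determined by discrete signs $\sigma_0,\dots,\sigma_j\in\{\pm1\}$ and translations $(a_0,\dots,a_j)\in\cZ_{j,\lambda}$. Requiring the prescribed sign pattern forces $\sigma_k=\pm(-1)^k$ with a single global sign, producing exactly the families $\cC_{j,2}^\pm$. The threshold $\lambda=\lambda_{j*}$ is covered by Proposition~\ref{Prop App A 3}(ii): since $l(\lambda_{j*})=1/(j+1)$ the set $\cZ_{j,\lambda_{j*}}$ degenerates to the single point $(0,1/(j+1),\dots,j/(j+1))$, so the resulting solution coincides with the limit $\lim_{\lambda\to\lambda_{j*}^+}\ov u_j(\lambda)$, ensuring continuity across the threshold (and hence connectedness within $\cC_j^\pm$).

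No single step is technically hard; the work is almost entirely bookkeeping and case matching, since the heavy lifting has been done in Propositions~\ref{Prop App A 1}--\ref{Prop App A 3} and Proposition~\ref{Prop properties time map}. The only point requiring some care is the matching at the three critical values $\lambda\in\{0,\lambda_{j*},\Lambda_j\}$: one must verify that no solution is double-counted and none is omitted, in particular that $\un u_j$ and $\ov u_j$ merge correctly at $\Lambda_j$, that $\ov u_j$ extends continuously across $\lambda=0$, and that the single-point dead-core configuration at $\lambda_{j*}$ is the common limit of $\ov u_j(\lambda)$ (as $\lambda\downarrow\lambda_{j*}$) and of the $\cC_{j,2}^\pm$ family (as $\lambda\uparrow\lambda_{j*}$).
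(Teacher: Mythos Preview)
Your proposal is correct and follows essentially the same route as the paper: the paper's ``proof'' is the discursive paragraph immediately preceding the theorem statement, which assembles the classification from Proposition~\ref{Prop App A 3} and counts the roots of $T_\lambda(\alpha)=\tfrac{1}{2j+2}$ via Proposition~\ref{Prop properties time map} in the same case split ($0<\lambda<\Lambda_j$, $\lambda=\Lambda_j$, $\lambda>\Lambda_j$, $\lambda\to 0^+$, $\lambda_{j*}\le\lambda\le 0$, $\lambda\to\lambda_{j*}^+$), noting the degeneration $\un u_j(0)=0$ and the continuous transition to dead-core solutions at $\lambda_{j*}$. Your write-up is if anything more systematic than the paper's informal paragraph, but the content and order of the argument are the same.
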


We finally remark that for all $k\in\{0,\ldots,j\}$ and $\la\le \la_{j*}$ there are solutions with $j$ interior (degenerate) zeros and only $k$ sign changes on $[0,1]$. These solutions are given by $u_j(\la,a,\sigma)$ for $a\in\cZ_{j,\la}$ and vectors $\sigma\in \{-1,+1\}^{j+1}$ which satisfy $\sigma_i\sigma_{i+1}=-1$ for precisely $k$ different indices in $i$.

\section{Appendix B - Proof of Proposition \ref{Prop Vorbereitung AprioriII 1}, Proposition \ref{Prop Vorbereitung AprioriII 2}, and 
Proposition~\ref{Prop properties time map}} \label{sec 5}
In this section we provide the proofs of some technical results concerning the time map $T_\la$ which we defined in \eqref{Gl def timemap}. Let us first mention that Proposition~\ref{Prop Vorbereitung AprioriII 1} is entirely contained in Proposition~\ref{Prop properties time map}. We will use the following equation
\begin{align}\label{Gl eq timemap}
T_\la(\al)
 &= \int_0^\al\frac{1}{\sqrt{2(G_\la(\al)-G_\la(z)})}\,dz \notag \\
 &= \int_0^\al \frac{1}{\sqrt{\frac{2\la}{q}(\al^q-z^q) +
       \frac{2}{p}(\al^p-z^p)}} \,dz       \notag \\
 &= \int_0^1 \Big(\frac{pq\al^{2-q}}{2p\la(1-s^q)+2q\al^{p-q}(1-s^p)}\Big)^{1/2}
      \,ds.
\end{align}

\medskip

\begin{altproof}{Proposition \ref{Prop properties time map} (i)}
From the inequality $1-s^q\le 1-s^p$ for all $s\in [0,1]$ we obtain
\[
T_\la(\al)
 = \int_0^1 \Big(\frac{pq\al^{2-q}}{2\la p (1-s^q)+2 q
      \al^{p-q}(1-s^p)}\Big)^{1/2} \,ds
 \le \Big( \frac{pq\al^{2-q}}{2\la p + 2 q\al^{p-q}}\Big)^{1/2}
        \int_0^1\frac{1}{\sqrt{1-s^q}}\,ds,
\]
and
\[
T_\la(\al)
 = \int_0^1 \Big(\frac{pq\al^{2-q}}{2\la p(1-s^q)+2q\al^{p-q}(1-s^p)}\Big)^{1/2}
     \,ds
 \ge \Big( \frac{pq\al^{2-q}}{2\la p + 2q\al^{p-q}}\Big)^{1/2}
        \int_0^1\frac{1}{\sqrt{1-s^p}}\,ds
\]
and assertion (i) follows.
\end{altproof}

\medskip

\begin{altproof}{Proposition \ref{Prop properties time map} (ii)}
The existence of at least one critical point of $T_\la$ follows from the intermediate value theorem since the formula
\begin{equation} \label{Gl Tlambdap}
T_\la'(\al)
 = \sqrt{pq}\al^{-q/2} \int_0^1 \frac{\la p(2-q)(1-s^q)-
       q(p-2)\al^{p-q}(1-s^p)  }{\big(2\la p(1-s^q)+2q\al^{p-q}(1-s^p)\big)^{3/2}}  \,ds
 \end{equation}
implies that $T_\la'(\al)\al^{q/2}$ tends to a positive value as $\al\to 0^+$ and $T_\la'(\al)\al^{p/2}$ tends to a negative value as $\al\to\infty$. Having proved the existence of a critical point of $T_\la$ it remains to prove uniqueness. To this end we prove $T_\la''(\al)<0$ for all $\al>0$ satisfying $T_\la'(\al)=0$.

Every critical point $\al$ of $T_\la$ satisfies
\begin{align*}
0 &= T'_\la(\al) \cdot (\sqrt{pq}\al^{-q/2})^{-1}  \\
  &= \int_0^1 \frac{\la p(2-q)(1-s^q) - q(p-2)\al^{p-q}(1-s^p)}
                   {\big(2\la p(1-s^q)+2q\al^{p-q}(1-s^p)\big)^{3/2}}
         \,ds \\
  &= \int_0^1 \frac{\la p(2-q)(1-s^q)}
                   {\big(2\la p(1-s^q)+2q\al^{p-q}(1-s^p)\big)^{3/2}}
                    \cdot \Big(1-\frac{q(p-2)(1-s^p)}{\la p(2-q)(1-s^q)}\al^{p-q}\Big)
         \,ds
\end{align*}
Hence, the second factor in the above integral must change sign which implies
\begin{equation} \label{Gl Tlambda (ii)}
  \frac{q(p-2)\al^{p-q}}{ \la p(2-q)}\le 1\leq \frac{(p-2)\al^{p-q}}{ \la(2-q)}
\end{equation}
Using this estimate, $T_\la'(\al)=0$ and \eqref{Gl Tlambdap} we obtain
\begin{align*}
&\al^{q/2} T_\la''(\al) 
 = \frac{d}{d\al} \Big(\al^{q/2} T_\la'(\al) \Big) \\
&\hspace{.2cm}
 = \sqrt{\frac{pq}{8}}\frac{d}{d\al} \Big(\int_0^1 \frac{\la p(2-q)(1-s^q)-
      q(p-2)\al^{p-q}(1-s^p)}{\big(\la p(1-s^q)+ q\al^{p-q}(1-s^p)\big)^{3/2}}   \,ds \Big) \\
&\hspace{.2cm}
 = \sqrt{\frac{pq}{8}}\int_0^1 \frac{q(p-q)\al^{p-q-1}(1-s^p)
      \big( \la p(-2p+3q-2)(1-s^q)+ q(p-2)\al^{p-q} (1-s^p)\big)
      }{2\big(\la p(1-s^q)+ q\al^{p-q}(1-s^p)\big)^{5/2}} \,ds \\
&\hspace{.2cm}
 \le \sqrt{\frac{pq}{8}}\int_0^1 \frac{q(p-q)\al^{p-q-1}(1-s^p)
      \big( \la p(-2p+3q-2)(1-s^q)+ \la p(2-q)(1-s^p)\big)
      }{2\big(\la p(1-s^q)+ q\al^{p-q}(1-s^p)\big)^{5/2}} \,ds \\
&\hspace{.2cm}
 = \sqrt{\frac{pq}{8}}\int_0^1 \frac{\la pq(p-q)\al^{p-q-1}(1-s^p)
      \big(-2p+2q+(2p-3q+2)s^q - (2-q)s^p\big)
      }{2\big(\la p(1-s^q)+ q\al^{p-q}(1-s^p)\big)^{5/2}} \,ds.
\end{align*}
Since the function $s\mapsto (2p-3q+2)s^q - (2-q)s^p$ is increasing on $[0,1]$ and attains the value $2p-2q$ at $s=1$ we obtain $T_\la''(\al)<0$ .
\end{altproof}

\medskip

\begin{altproof}{Proposition \ref{Prop properties time map} (iii)}
The estimate for $\alpha_\lambda$ follows from \eqref{Gl Tlambda (ii)}. The lower estimates for $T_\la$ from (i) and the definition of $\al_{\la}$ from
Proposition~\ref{Prop properties time map}~(ii) moreover yield
\begin{align*}
T_{\la}(\al_{\la})
 &=  \max_{\al>0} T_\la(\al) 
  \ge \max_{\al>0}\Big(\frac{pq\al^{2-q}}{2p\la + 2q\al^{p-q}}\Big)^{1/2}
            \int_0^1 \frac{1}{\sqrt{1-s^p}}\ds \\
 &= \Big(\frac{q(p-2)}{2(p-q)}\Big)^{1/2}
      \Big(\frac{p(2-q)}{q(p-2)}\Big)^{(2-q)/(2(p-q))}
      \int_0^1  \frac{1}{\sqrt{1-s^p}}\ds
      \cdot \la^{(2-p)/2(p-q)}
\end{align*}
where the maximum is attained at $(\frac{\la p(2-q)}{q(p-2)})^{1/(p-q)}$. Similarly the upper estimate for $T_\la(\al_\la)$ is proved and we are done.
% In order to prove the
% enclosure for $\al_\la$ we set $\be_\la:=\la^{-1/(p-q)} \al_\la$. Then we have
% \begin{align*}
% \la^{(p-2)/2(p-q)} T_\la(\al_\la)
%  &\le \la^{(p-2)/2(p-q)} \cdot
%       \Big( \frac{pq{\al_\la}^{2-q}}{2\la + 2q{\al_\la}^{p-q}}\Big)^{1/2} \int_0^1\frac{1}{\sqrt{1-s^q}}\,ds \\

%  &= \Big( \frac{pq\be_\la^{2-q}}{2p + 2q\be_\la^{p-q}}\Big)^{1/2}
%       \int_0^1\frac{1}{\sqrt{1-s^q}}\,ds \\
%  &\le \min\Big\{ \sqrt{\frac{p}{2}}
%       \be_\la^{(2-p)/2},\sqrt{\frac{q}{2}} \be_\la^{(2-q)/2}\Big\} \cdot
%       \int_0^1\frac{1}{\sqrt{1-s^q}}\,ds
% \end{align*}
% and the enclosure $\tilde c_3\le \la^{(p-2)/2(p-q)} T_\la(\al_\la)\le \tilde C_3$ from above gives that
% $\be_\la$ is bounded from below and from above by a positive    number only depending on $p,q$. This is all
% we had to show.
\end{altproof}

\medskip

\begin{altproof}{Proposition \ref{Prop properties time map} (iv)}
The formula for the time map from \eqref{Gl eq timemap} shows that the function $\la\mapsto T_\la(\al_\la)=\max_{\al>0} T_\la(\al)$ strictly decreases on $\R_{>0}$ from $+\infty$ to $0$. By the intermediate value theorem we deduce that there are uniquely determined positive numbers $\La_0<\La_1<\ldots<\Lambda_j\to\infty$ as ${j\to\infty}$ such that $T_{\Lambda_j}(\al_{\Lambda_j})=\frac{1}{2j+2}$ for all $j\in\N_0$. Moreover, the estimates from part (iii) give
\begin{align*}
\frac{1}{2j+2} = T_{\Lambda_j}(\al_{\Lambda_j}) \ge c_3{\Lambda_j}^{(2-p)/2(p-q)}, \qquad
\frac{1}{2j+2} = T_{\Lambda_j}(\al_{\Lambda_j}) \le C_3 {\Lambda_j}^{(2-p)/2(p-q)}
\end{align*}
which yields the estimates for $\Lambda_j$.
\end{altproof}

\medskip

\begin{altproof}{Proposition \ref{Prop properties time map} (v)}
For notational convenience set $c_\la:=(\tfrac{p|\la|}{q})^{1/(p-q)}$. The monotonicity of $T_\la$ on $(c_\la,\infty)$ follows from $\la<0$ and \eqref{Gl Tlambdap}. Hence, we obtain the result from
\begin{align*}
T_\la(c_\la)
 &= \int_0^{c_\la} \frac{1}{\sqrt{2(G_\la(c_\la)-G_\la(z))}} \,dz 
  = \int_0^{c_\la} \frac{1}{\sqrt{ \frac{2|\la|}{q}|z|^q - \frac{2}{p}|z|^p}} \,dz   \\
 &= \int_0^1 \frac{c_\la}{\sqrt{ \frac{2|\la|}{q}(c_\la t)^q - \frac{2}{p}(c_\la t)^p}}
      \,dt 
  = \sqrt{\frac{q}{2|\la|}} \cdot c_\la^{(2-q)/2}
     \int_0^1 \frac{1}{\sqrt{t^q -t^p}} \,dt   \\
 &= \Big( \frac{q}{2}\Big(\frac{p}{q}\Big)^{(2-q)/(p-q)}\Big)^{1/2} \int_0^1
      \frac{1}{\sqrt{t^q-t^p}}\,dt \cdot |\la|^{-(p-2)/2(p-q)} 
  = \frac{1}{2j+2} \Big(\frac{|\la_{j*}|}{|\la|}\Big)^{(p-2)/2(p-q)}
\end{align*}
where the latter equality follows from the definition of $\la_{j*}$, see
\eqref{Gl defn lambdajunten*}.
\end{altproof}

\medskip

In the proof of Proposition \ref{Prop Vorbereitung AprioriII 2} we use the following shorthand notation
$$
  m_q := \int_0^1 \frac{1}{\sqrt{1-s^q}}\ds,\qquad
  m_p := \int_0^1 \frac{1}{\sqrt{1-s^p}}\ds
$$

\medskip

\begin{altproof}{Proposition \ref{Prop Vorbereitung AprioriII 2} (i)}
From $0<\be_1,\be_2\le\al_\la\le C_3 \la^{1/(p-q)}$, see
Proposition~\ref{Prop properties time map} (iii), and
Proposition~\ref{Prop properties time map} (i) we get
\begin{align*}
T_\la(\be_1)
 &\le m_q \cdot \Big(\frac{pq \be_1^{2-q}}{2\la p+2q\be_1^{p-q}}\Big)^{1/2}
 \le m_q \cdot \Big(\frac{q}{2\la}\Big)^{1/2} \be_1^{(2-q)/2}, \\
  T_\la(\be_2)
 &\ge m_p \cdot
       \Big(\frac{pq\be_2^{2-q}}{2\la p+2q\be_2^{p-q}}\Big)^{1/2}
 \ge m_p\cdot
       \Big(\frac{pq}{2\la(p+qC_3^{p-q})}\Big)^{1/2} \be_2^{(2-q)/2}.
\end{align*}
From this we obtain
$$
\frac{T_\la(\be_1) }{T_\la(\be_2)  }
 \le \frac{m_q}{m_p}\Big(1+\frac{q}{p}C_3^{p-q}\Big)^{1/2}\Big(\frac{\be_1}{\be_2}\Big)^{(2-q)/2}
$$
or equivalently
\begin{equation} \label{Gl proof Vorbereitung AprioriII 2 1}
\frac{\be_2}{\be_1}
 \le \Big(\frac{m_q}{m_p}\Big(1+\frac{q}{p}C_3^{p-q}\Big)^{1/2}
      \frac{T_\la(\be_2)}{T_\la(\be_1)}\Big)^{2/(2-q)}.
\end{equation}
Hence, using the estimate from Proposition \ref{Prop properties time map} (i) and \eqref{Gl proof Vorbereitung AprioriII 2 1} we get
\begin{align*}
\frac{G_\la(\be_2)}{G_\la(\be_1)}
 &= \frac{\la p \be_2^q + q\be_2^p}{\la p\be_1^q + q\be_1^p} 
  = \frac{\be_2^q}{\be_1^q} \cdot \frac{2\la p+2q\be_2^{p-q}}{2\la p + 2q\be_1^{p-q}}\\
 &= \frac{\be_2^2}{\be_1^2}
     \cdot \frac{\frac{pq\be_1^{2-q}}{2\la p+2q\be_1^{p-q}}}
           {\frac{pq\be_2^{2-q}}{2\la p+2q\be_2^{p-q}}} 
  \le \frac{\be_2^2}{\be_1^2} \cdot \frac{m_q^2}{m_p^2}\Big(\frac{T_\la(\be_1)}{T_\la(\be_2)}\Big)^2\\
 &\le \Big(\frac{m_q}{m_p}\Big(1+\frac{q}{p}C_3^{p-q}\Big)^{1/2}
       \frac{T_\la(\be_2)}{T_\la(\be_1)}\Big)^{4/(2-q)}
        \cdot \frac{m_q^2}{m_p^2}\Big(\frac{T_\la(\be_1)}{T_\la(\be_2)}\Big)^2 \\
 &= \Big(\frac{m_q}{m_p}\Big)^{(8-2q)/(2-q)} \Big(1+\frac{q}{p}C_3^{p-q}\Big)^{2/(2-q)}
      \Big(\frac{T_\la(\be_2)}{T_\la(\be_1)}\Big)^{2q/(2-q)}.
\end{align*}
\end{altproof}

\medskip

\begin{altproof}{Proposition \ref{Prop Vorbereitung AprioriII 2} (ii)}
From $\be_1,\be_2\ge \al_\la\ge c_3\la^{1/(p-q)}>0$, see
Proposition~\ref{Prop properties time map} (iii), we get
\begin{align*}
T_\la(\be_2)
 &\le m_q \cdot
      \Big(\frac{pq\be_2^{2-p}}{2p \la \be_2^{q-p}+2q}\Big)^{1/2}
  m_q \cdot
      \Big(\frac{p }{2}\Big)^{1/2}\be_2^{(2-p)/2}, \\
T_\la(\be_1)
 &\ge m_p \cdot
       \Big(\frac{pq\be_1^{2-p}}{2p \la \be_1^{q-p}+2q}\Big)^{1/2}
 \ge m_p\cdot
       \Big(\frac{pq}{2p c_3^{q-p}+2q}\Big)^{1/2} \be_1^{(2-p)/2}
\end{align*}
hence
$$
\frac{\be_2^2}{\be_1^2}
 \le \Big( \frac{m_q}{m_p}\Big(1+\frac{p}{q}c_3^{q-p}\Big)^{1/2}
      \frac{T_\la(\be_1)}{T_\la(\be_2)} \Big)^{4/(p-2)}.
$$
As before this implies
\begin{align*}
\frac{G_\la(\be_2)}{G_\la(\be_1)}
 &= \frac{\be_2^2}{\be_1^2} \cdot
     \frac{\frac{pq\be_1^{2-q}}{2\la p+2q\be_1^{p-q}}}
          {\frac{pq\be_2^{2-q}}{2\la p+2q\be_2^{p-q}}} \\
 &\le \Big(\frac{m_q}{m_p}\Big(1+\frac{p}{q}c_3^{q-p}\Big)^{1/2}
        \frac{T_\la(\be_1)}{T_\la(\be_2)}\Big)^{4/(p-2)} \cdot
        \frac{m_q^2}{m_p^2}\Big(\frac{T_\la(\be_1)}{T_\la(\be_2)}\Big)^2 \\
 &= \Big(\frac{m_q}{m_p}\Big)^{2p/(p-2)} \Big(1+\frac{p}{q}c_3^{q-p}\Big)^{2/(p-2)}
      \Big(\frac{T_\la(\be_1)}{T_\la(\be_2)}\Big)^{2p/(p-2)}.
\end{align*}
\end{altproof}

\medskip

\begin{altproof}{Proposition \ref{Prop Vorbereitung AprioriII 2} (iii)}
The estimate for $\be_1\le \al_\la\le \be_2$ from the assertion follows from
$$
\frac{G_\la(\be_1)}{G_\la(\be_2)}
 = \frac{G_\la(\be_1)}{G_\la(\al_\la)} \cdot
     \frac{G_\la(\al_\la)}{G_\la(\be_2)}
$$
and the inequalities which we have proved in part (i) and part (ii).
\end{altproof}

\begin{ack}
  The second author expresses his gratitude to the Klaus-Tschira-Stiftung for providing financial support while this project was accomplished. The authors would like to thank Professor Wolfgang Reichel and Professor Michael Plum for several helpful discussions about the subject.
\end{ack}

\bibliographystyle{alpha}
%\bibliography{Bartsch-Mandel_v4}

\end{document}